\tikzstyle{vertex}=[ draw, inner sep=0pt, minimum size=0pt]
\newlist{subquestion}{enumerate}{1}
\date{}
\newtheorem{theorem}{{\bf Theorem}}[section]
\newtheorem{definition}[theorem]{{\bf Definition}}
\newtheorem{example}[theorem]{{\bf Example}}
\newtheorem{lemma}[theorem]{{\bf Lemma}}
\newtheorem{proposition}[theorem]{{\bf Proposition}}
\newtheorem{Case}{Case}
\newtheorem{Case1}{Case}
\newtheorem{Case2}{Case}
\newtheorem{Subcase}{Case}
\numberwithin{Subcase}{Case}
\newtheorem{Subsubcase}{Case}
\numberwithin{Subsubcase}{Subcase}
\newtheorem{claim}{\textit{Claim}}
\newcommand{\va}[1]{ a_{#1} }
\newcommand{\vb}[1]{ b_{#1} }
\newcommand{\vv}[1]{ v_{#1} }
\newcommand{\vl}[1]{ lk(#1) }
\newcommand{\5}{5-gon}
\newcommand{\Echar}[1]{Euler characteristic #1}
\newcommand{\vr}[1]{vertices #1}
\newcommand{\vx}[1]{vertex #1}
\newcommand{\lnk}[4]{ $lk(#1)=C_8([#2],[#3],#4)  $}
\newcommand{\klk}[5]{ $lk(#1)=C_9([#2],[#3],[#4],[#5])  $}
\newcommand{\lkk}[5]{$ lk(#1)=C_9([#2],#3,#4,#5) $}
\begin{document}
\author[1] {Debashis Bhowmik}
\author[2] {Dipendu Maity}
\author[3] {Ashish Kumar Upadhyay}
\author[4] {Bhanu Pratap Yadav}
\affil[1, 3, 4]{Department of Mathematics, Indian Institute of Technology Patna, Patna 801\,106, India.
 \{debashis.pma15, upadhyay, bhanu.pma16\}@iitp.ac.in.}
\affil[2]{Department of Sciences and Mathematics,
	Indian Institute of Information Technology Guwahati, Bongora, Assam-781\,015, India.
	dipendu@iiitg.ac.in/dipendumaity@gmail.com.}
	
\title{Semi-equivelar maps on the surface of Euler genus 3}


\date{April 30, 2020}

\maketitle

\vspace{-10mm}

\begin{abstract}
If the cyclic sequence of faces for all the vertices in a map are of same type, then the map is said to be a semi-equivelar map. In this article, we classify all the types of semi-equivelar maps on the surface of Euler genus 3, $i.e.$, on the surface of Euler characteristic  $-1$. That is, we present {a complete map types of} semi-equivelar maps (if exist)  on the surface of Euler char.  $-1$. We know the complete list of semi-equivelar maps (upto isomorphism) for some types. Here, we also present a complete list of semi-equivelar maps for one type and for other types, similar steps can be followed.  
\end{abstract}

\noindent {\small {\em MSC 2010\,:} 52C20, 52B70, 51M20, 57M60.

\noindent {\em Keywords:} Vertex-transitive maps; Semi-equivelar maps; Polyhedral maps.}

\section{Introduction}

Recall that a map is a connected 2-dimensional cell complex, or, equivalently, a cellular embedding of a connected graph in a surface. In this article, a map will mean a polyhedral map on a surface. Two maps are said to be isomorphic if there exists a bijective map between the vertex sets, which induced a bijective map between the face sets. In particular, an automorphism of a map is a permutation of the set of vertices that preserves incidence with edges and faces of the embedding. A map  $X $ is said to be {\em vertex-transitive} if the group of all automorphism, denoted ${ Aut}(X) $, acts transitively on the set  $V(X) $ of vertices of  $X $.

For a vertex  $u $ in a map  $X $, the faces containing  $u $ form a cycle (called the {\em face-cycle} at  $u $)  $C_u $ in the dual graph  of  $X $. That is,  $C_u $ is of the form  $(F_{1,1}\mbox{-}\cdots \mbox{-}F_{1,n_1})\mbox{-}\cdots\mbox{-}(F_{k,1}\mbox{-}\cdots \mbox{-}F_{k,n_k})\mbox{-}$ $F_{1,1} $, where  $F_{i,\ell} $ is a  $p_i $-gon for  $1\leq \ell \leq n_i $,  $1\leq i \leq k $,  $p_r\neq p_{r+1} $ for  $1\leq r\leq k-1 $ and  $p_n\neq p_1 $. A map  $X $ is said to be {\em semi-equivelar} (see \cite{datta2017}) if  $C_u $ and  $C_v $ are of same type for all  $u, v \in V(X) $. That is, there exist integers  $p_1, \dots, p_k\geq 3 $ and  $n_1, \dots, n_k\geq 1 $,  $p_i\neq p_{i+1} $ (addition in the suffix is modulo  $k $) such that  $C_u $ is of the form as above for all  $u\in V(X) $. In such a case,  $X $ is called a semi-equivelar map of type  $[p_1^{n_1}, \dots, p_k^{n_k}] $ (or, a map of type  $[p_1^{n_1}, \dots, p_k^{n_k}] $). Clearly, a
vertex-transitive map is semi-equivelar. 

It is a classical problem to ask which are the types of semi-equivelar maps exist on a surface. In this article, we discuss the same on the  surface of Euler char.  $-1$. On this surface, Upadhyay, Tiwari and Maity \cite{utm2014} have given a complete list of maps of type $[3^5, 4^1]$, and shown that there are exactly three such maps. Tiwari and Upadhyay \cite{tu2018} have also studied maps of the types whose at least one map of the type is also available on the orientable surface of genus 2 and vertex-transitive. (The list of vertex-transitive semi-equivelar maps on the surface of orientable genus 2, 3, 4 can be found in  \cite{kn2012, k2011}.) In particular, they have shown that $(i)$ there are no such maps of type $[3^4, 8^1]$, $[3^2, 4^1, 3^1, 6^1]$, $[3^1, 6^1, 4^1, 6^1]$, $[4^3, 6^1]$
or $[4^1, 8^1, 12,^1]$, and $(ii)$ there are exactly two such maps of each of type $[3^1, 4^1, 8^1, 4^1]$, $[4^1, 6^1, 16^1]$ and $[6^1, 8^1]$. It is also shown that none of the above-mentioned nine semi-equivelar maps are vertex-transitive. 
 Bhowmik and Upadhyay \cite{bu2019} have extended this studied and shown that there is exactly one map of type  $[3^1, 4^1, 3^1, 4^2] $ on the surface of Euler char.  $-1 $. In this article, we attempt to give the complete list types of semi-equivelar maps which exist on the surface of Euler char. $-1 $. More precisely, we present the complete list (upto isomorphism) of semi-equivelar maps for the vertices upto twenty, and for {vertices more than twenty}, we show the existence of semi-equivelar map (if exists) on the surface of Euler characteristics $-1 $. (Here, we have done the computation for one type only,  and for other types, one can follow similar steps to get the complete list.) Thus, we have the following results.

\begin{theorem} \label{theo:sf}
Let  $X $ be a semi-equivelar map on the surface of Euler char.  $-1 $ then  $X $ is  of type  $[3^1, 4^1, 3^1, 4^2], [3^5, 4^1], [3^1, 4^1, 8^1, 4^1], [4^1, 6^1, 16^1], [6^2, 8^1] $,  $[4^3, 5^1] $, $[6^2, 7^1]$, $[3^1, 4^1, 7^1, 4^1]$, $[4^1, 6^1, 14^1]$ or $[4^1, 8^1, 10^1]$.
\end{theorem}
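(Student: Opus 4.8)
The key is a discrete Euler-characteristic computation. Let $X$ be a semi-equivelar map of type $[p_1^{n_1},\dots,p_k^{n_k}]$ on the surface of Euler characteristic $-1$, with $V$, $E$, $F$ vertices, edges, faces. Since every vertex has the same face-cycle, the degree of each vertex is $d=\sum_{i=1}^k n_i$, so $2E=dV$. Counting incidences of faces of each size, if $F_j$ denotes the number of $j$-gons, then $2E=\sum_j jF_j$ and each vertex meets $n_i$ faces of size $p_i$ (when the $p_i$ are distinct; in general one groups by face size), so $p_jF_{p_j}$ counts vertex--$p_j$-gon incidences giving $\sum_i n_i V = \sum_i p_i F_{p_i}$ appropriately. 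Substituting into $V-E+F=-1$ yields a single linear relation among the $n_i$ and $p_i$, namely
\[
V\left(1-\frac{d}{2}+\sum_{i=1}^k\frac{n_i}{p_i}\right)=-1,
\]
so that $\frac{d}{2}-\sum_i \frac{n_i}{p_i}-1 = \frac1V>0$. Thus the ``curvature'' quantity $\chi_{\mathrm{comb}}:=1-\frac d2+\sum_i \frac{n_i}{p_i}$ must be a negative number of the form $-1/V$ with $V$ a positive integer; in particular $0<-\chi_{\mathrm{comb}}\le 1$, and moreover $V=-1/\chi_{\mathrm{comb}}$ must itself be a positive integer. Equivalently, writing the face-vector condition out, we need $\sum_i \frac{n_i}{p_i} < \frac d2 - 1 \le \sum_i \frac{n_i}{p_i} + 1$.

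\textbf{Enumerating the types.} The first step is to bound $k$ and the $p_i$. Standard inequalities for polyhedral maps (each face is at least a triangle, each vertex has degree at least $3$, and in a polyhedral map two faces meet in at most an edge) force $d\ge 3$ and give upper bounds once the pattern $[p_1^{n_1},\dots,p_k^{n_k}]$ is fixed in ``shape''. Concretely, I would argue: if $d=3$ then $\sum n_i/p_i < 1/2$, impossible since each $p_i\ge 3$ gives $\sum n_i/p_i \ge d/p_{\max}$; this quickly kills small $d$. For each admissible $d$ (which turns out to be a small finite list, roughly $3\le d\le$ some bound like $6$ or $7$, since large $d$ makes $d/2-1$ too big to be matched by $\sum n_i/p_i\le d/3$), enumerate all multisets $\{p_1^{n_1},\dots,p_k^{n_k}\}$ of face sizes realizing that degree with $p_i\ge 3$, $k$ bounded by $d/3$ (hence $k\le 2$ once $d$ is moderate, or $k\le d/3$). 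For each candidate, compute $-\chi_{\mathrm{comb}}$ exactly as a rational; discard it unless it equals $1/V$ for a positive integer $V$. This is a finite check producing exactly the ten listed types plus possibly a few more that must then be eliminated.

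\textbf{The main obstacle: ruling out the extra survivors.} The Euler-characteristic sieve will not by itself produce precisely ten types — it will also flag types like $[3^4,8^1]$, $[3^2,4^1,3^1,6^1]$, $[3^1,6^1,4^1,6^1]$, $[4^3,6^1]$, $[4^1,8^1,12^1]$, $[3^4,6^1]$-type families, etc., which satisfy the arithmetic relation (their $-\chi_{\mathrm{comb}}$ is $1/V$ for some integer $V$) but for which no actual polyhedral map exists on this surface. This is exactly the content of the cited results \cite{utm2014, tu2018, bu2019}: those papers prove non-existence (or existence) for precisely the ``numerically possible'' types. So the real work of the theorem is to assemble these known non-existence results together with the new existence constructions (the remaining types for which a map is exhibited later in the paper, including $[4^3,5^1]$, $[6^2,7^1]$, $[3^1,4^1,7^1,4^1]$, $[4^1,6^1,14^1]$, $[4^1,8^1,10^1]$) and verify that after removing every numerically-possible-but-geometrically-impossible type, exactly the ten listed remain. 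I expect the hardest part to be the combinatorial non-existence arguments for the borderline types: one fixes the type, builds the forced local configuration around a vertex using the face-cycle condition, propagates it, and derives a contradiction with either the surface's Euler characteristic, connectivity, or the polyhedrality (intersection) condition — but for the present theorem these are quoted, so here the essential step is simply the careful and complete enumeration of the arithmetic solutions, making sure none is overlooked and each is correctly classified as existing or not.
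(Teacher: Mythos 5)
Your overall architecture is the same as the paper's: an Euler-characteristic sieve $V\bigl(1-\tfrac d2+\sum_i \tfrac{n_i}{p_i}\bigr)=-1$ to enumerate the numerically admissible pairs $(V,\hbox{type})$ (the paper's Lemma \ref{lem1}, organized by $d=3,4,5,6$), followed by existence constructions and non-existence arguments to whittle the list down to the ten stated types. However, there is a concrete error in your sketch that, if carried out as written, would lose half the theorem. You assert that for $d=3$ the requirement $\sum_i n_i/p_i<1/2$ is ``impossible since each $p_i\ge 3$ gives $\sum_i n_i/p_i\ge d/p_{\max}$,'' and that ``this quickly kills small $d$.'' The bound $3/p_{\max}\ge 1/2$ only holds when $p_{\max}\le 6$; for larger faces there is no contradiction, and indeed the degree-$3$ solutions are exactly where five of the ten listed types live: $[4^1,6^1,16^1]$, $[6^2,8^1]$, $[6^2,7^1]$, $[4^1,6^1,14^1]$ and $[4^1,8^1,10^1]$ all have $d=3$. (The correct elimination in that direction is at the other end: $\sum_i n_i/p_i\le d/3$ together with $\sum_i n_i/p_i = d/2-1-1/V$ and $V\ge 7$ forces $d\le 6$, which is how the paper bounds the degree.) Your later sentence enumerating ``$3\le d\le 6$'' contradicts this claim, so the plan is internally inconsistent as stated.

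Two smaller points. First, a purely arithmetic sieve overshoots more than you indicate: the paper supplements it with the Datta--Maity parity criterion (Proposition \ref{prop}) already inside the enumeration (this is also what pins down the cyclic order, e.g.\ $[3^1,4^1,3^1,4^2]$ rather than $[3^2,4^3]$), and even then several survivors are \emph{not} disposed of by citation but require new non-existence proofs in this very paper: $[3^2,4^1,3^1,5^1]$ (Lemma \ref{lem6}), $[3^4,7^1]$ (Lemma \ref{lem8}), and $[3^1,7^1,3^1,7^1]$, $[3^1,14^2]$ (Lemma \ref{lem9}); only $[3^4,8^1]$, $[3^1,6^1,4^1,6^1]$, $[3^2,4^1,3^1,6^1]$, $[4^3,6^1]$, $[4^1,8^1,12^1]$ and $[3^1,5^3]$ are quoted from elsewhere. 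Second, existence for $[6^2,7^1]$, $[3^1,4^1,7^1,4^1]$, $[4^1,6^1,14^1]$ and $[4^1,8^1,10^1]$ must be established by explicit constructions (the paper's Example \ref{eg:8maps-torus} and Lemma \ref{lem7}); your plan acknowledges this but treats it as an afterthought, whereas without it the theorem as stated (``then $X$ is of type \dots'') is only an upper bound on the possible types.
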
 

\begin{theorem} \label{theo:sf1}
	Let  $X $ be a semi-equivelar map of type  $[4^3, 5^1]$ on the surface of Euler char.  $-1 $. Then, $X \cong$ $\cal{K}_1, \cal{K}_2$ or $\cal{K}_3$ ($\cal{K}_1, \cal{K}_2, \cal{K}_3$ are given in Example \ref{eg:8maps-torus}).
\end{theorem}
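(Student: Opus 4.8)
The plan is to first nail down the combinatorial parameters of $X$, then exploit the rigidity of the local picture forced by the type $[4^3,5^1]$ to build the map outward from a single face, and finally to reduce the global structure to a short, checkable list of gluings.

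Since every vertex of a map of type $[4^3,5^1]$ has degree $4$, we get $2E=4V$. Counting vertex--face incidences separately for the two face sizes gives $4f_4=3V$ and $5f_5=V$, where $f_4,f_5$ are the numbers of square and pentagonal faces, so $F=f_4+f_5=\tfrac{19}{20}V$. Substituting into $V-E+F=-1$ forces $V=20$, hence $E=40$, $f_4=15$, $f_5=4$. Because each vertex lies on exactly one pentagon and $5f_5=20=V$, the four pentagons are pairwise vertex-disjoint and partition $V(X)$ into four $5$-sets; in particular no two pentagons share an edge, so the $20$ edges on a pentagon are exactly the edges incident to one pentagon and one square, and the remaining $20$ edges are each incident to two squares. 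Finally, since $X$ is polyhedral, a square and a pentagon share at most one edge, so each square carries $0$, $1$, or $2$ pentagon-edges.

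Next I would fix a pentagon $P=(v_0v_1v_2v_3v_4)$ and analyse its star. Each edge $v_iv_{i+1}$ lies on a unique square $S_i$, and by the polyhedrality constraint just noted the $S_i$ are five distinct squares forming a collar around $P$; writing out the face-cycle $[4^3,5^1]$ at each $v_i$ determines how consecutive $S_i$'s meet along edges through $v_i$ and introduces new vertices, for which one records which incident faces and edges are already committed. Pushing the analysis one layer further, every vertex eventually collects its two pentagon-edges, which means each newly created vertex must lie on one of the three remaining pentagons. Thus the problem becomes: in how many ways can the $15$ squares be arranged so that they glue the four vertex-disjoint pentagons into a closed surface realising the type $[4^3,5^1]$ with Euler characteristic $-1$?

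To finish, encode a candidate by the pattern of incidences of the $15$ squares with the four pentagons (equivalently, a small multigraph on four nodes together with the combinatorial data of the collars from the previous step). The constraints from the counting above and the forced collars leave only finitely many such patterns, and each admits only finitely many realisations; the bulk of the argument is checking these cases and discarding every one in which some vertex is forced to have a face-cycle other than $[4^3,5^1]$, or in which the partial complex closes up into a surface of the wrong Euler characteristic or of the wrong orientability (the target surface is non-orientable). The surviving cases are precisely the three maps $\mathcal{K}_1,\mathcal{K}_2,\mathcal{K}_3$ exhibited in Example~\ref{eg:8maps-torus}, and one verifies directly that each is a polyhedral map of type $[4^3,5^1]$ on the surface of Euler characteristic $-1$ and that they are pairwise non-isomorphic (for instance, separated by a combinatorial invariant such as the isomorphism type of the square--pentagon incidence multigraph, or the order of the automorphism group). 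Hence $X\cong\mathcal{K}_1,\mathcal{K}_2$ or $\mathcal{K}_3$. The main obstacle is this case analysis: it is lengthy, and the delicate point is completeness --- making sure that no unexpected identification of collar vertices is overlooked --- which is most safely handled by organising the enumeration carefully and cross-checking it by a short computer search over $20$-vertex maps of this type.
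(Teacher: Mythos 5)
Your opening counts are correct and coincide with the paper's setup in Lemma~\ref{lemma1}: $|V(X)|=20$, $f_4=15$, $f_5=4$, and the four pentagons are pairwise vertex-disjoint, so the squares alone must glue them into the surface. The difficulty is that everything after this point is a plan rather than a proof. The entire content of the theorem lives in the exhaustive enumeration, which in the paper occupies the several pages of Lemma~\ref{lemma1}: the links $lk(1)$ and $lk(2)$ are normalized up to symmetry, the finitely many ways of filling in the unknown entries are branched on one at a time, and every branch is either killed by a contradiction in some vertex link or shown to produce a map isomorphic to $\mathcal{K}_1$, $\mathcal{K}_2$ or $\mathcal{K}_3$. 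You describe a framework for such an enumeration (the collar of squares around a fixed pentagon, then the square--pentagon incidence pattern) but do not execute a single case; the assertion that ``the surviving cases are precisely the three maps'' of Example~\ref{eg:8maps-torus} is exactly the statement to be proved, and you yourself concede that completeness is ``most safely handled by \dots a short computer search.'' As written, a reader cannot verify the theorem.

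Two further points. First, pairwise non-isomorphism of $\mathcal{K}_1,\mathcal{K}_2,\mathcal{K}_3$ also requires an argument; the paper supplies one in Lemma~\ref{lemma2} by determining $Aut(\mathcal{K}_i)$ (groups of orders $4$, $2$ and $8$ respectively, found via the auxiliary graphs $G_6(\mathcal{K}_i)$), whereas your suggestion of ``some combinatorial invariant'' is left unspecified. Second, your proposed organizing datum --- the incidence multigraph of the $15$ squares with the four pentagons --- is not obviously a complete invariant of the intermediate cases: distinct partial complexes can share the same incidence pattern, so the enumeration must in any event descend to the level of individual vertex links, which is precisely where the paper's proof spends all of its effort. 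In short, your approach is the same in kind as the paper's (a local-to-global case analysis), but the decisive portion of the argument is missing.
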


 From \cite{bu2019, utm2014, tu2018} we know that the maps {$\cal{K}_{8}, \cal{K}_{9}, \dots, \cal{K}_{17}$} are not vertex-transitive. Thus, we have the following.

\begin{theorem} \label{theo:sf2}
	The maps $\cal{K}_1, \cal{K}_2, \dots, \cal{K}_{17}$ are not vertex-transitive.
\end{theorem}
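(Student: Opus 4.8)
My plan splits the statement into two halves. The ten maps $\mathcal{K}_{8},\ldots,\mathcal{K}_{17}$ --- those of types $[3^1,4^1,3^1,4^2]$, $[3^5,4^1]$, $[3^1,4^1,8^1,4^1]$, $[4^1,6^1,16^1]$ and $[6^2,8^1]$ --- are shown not to be vertex-transitive in \cite{bu2019}, \cite{utm2014} and \cite{tu2018}, so for these nothing remains. All the work concerns the seven maps constructed here: the three maps $\mathcal{K}_{1},\mathcal{K}_{2},\mathcal{K}_{3}$ of type $[4^3,5^1]$ (listed in Example~\ref{eg:8maps-torus} and classified in Theorem~\ref{theo:sf1}, each on $20$ vertices), and the maps $\mathcal{K}_{4},\ldots,\mathcal{K}_{7}$ realising the remaining four types $[6^2,7^1]$, $[3^1,4^1,7^1,4^1]$, $[4^1,6^1,14^1]$ and $[4^1,8^1,10^1]$ (on $42$, $42$, $84$ and $40$ vertices respectively, by the Euler-characteristic count $n\,(1-\tfrac q2+\sum_i \tfrac{n_i}{p_i})=-1$, where $q=\sum_i n_i$ is the vertex-degree).

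The reduction I would use is the following. If a map $X$ of type $[p_1^{n_1},\ldots,p_k^{n_k}]$ on $n$ vertices is vertex-transitive, then, because $\mathrm{Aut}(X)$ acts freely on flags, the vertex stabiliser $(\mathrm{Aut}(X))_v$ acts faithfully on the darts at $v$; since an automorphism must send a $p$-gon to a $p$-gon, it induces a symmetry of the cyclic word $p_1^{n_1}\cdots p_k^{n_k}$, so $(\mathrm{Aut}(X))_v$ embeds into the subgroup of the dihedral group $D_q$ fixing that word. For each of the seven types in question this subgroup has order $1$ or $2$ (the words $4\,4\,4\,5$, $6\,6\,7$ and $3\,4\,7\,4$ each admit a single reflection, while $4\,6\,14$ and $4\,8\,10$ admit none). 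Hence a vertex-transitive $\mathcal{K}_i$ would have $|\mathrm{Aut}(\mathcal{K}_i)|\in\{n,2n\}$, and it is enough to show for each $i\le 7$ that $\mathrm{Aut}(\mathcal{K}_i)$ is \emph{not} transitive on $V(\mathcal{K}_i)$, i.e.\ that $|\mathrm{Aut}(\mathcal{K}_i)|<n$.

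To establish this I would work from the explicit vertex-and-face data of each $\mathcal{K}_i$ recorded in the Examples and attach to every vertex $v$ an invariant $\iota(v)$ preserved by all automorphisms. The safe choice is the isomorphism type of the induced subcomplex on the combinatorial ball $B_2(v)$; in practice a lighter descriptor suffices, e.g.\ the cyclic list of face-sizes round the link of $v$ together with, for each face at $v$, the multiset of sizes of the faces met one step further out. Any automorphism carries $B_2(v)$ isomorphically onto $B_2(\varphi(v))$, so $\iota$ is automorphism-invariant; thus if $\mathcal{K}_i$ were vertex-transitive, $\iota$ would be constant on $V(\mathcal{K}_i)$, and it suffices to exhibit two vertices where it is not. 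For $\mathcal{K}_{1},\mathcal{K}_{2},\mathcal{K}_{3}$ one can exploit that the four pentagons are pairwise vertex-disjoint and partition $V$, while the remaining square-square edges form a $2$-regular spanning subgraph: reading $\iota$ off from how that subgraph interlaces the four pentagon-blocks already separates vertices. For $[6^2,7^1]$, $[4^1,6^1,14^1]$ and $[4^1,8^1,10^1]$ the way the unique large face in each local picture is threaded, via the intervening squares (and hexagons), to the next copy of it forces non-uniformity; for $[3^1,4^1,7^1,4^1]$ the triangles and the $7$-gons play the analogous role. Each check is a finite inspection of the listed faces.

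The obstacle is not conceptual but arithmetic: the face lists of $\mathcal{K}_{6}$ ($84$ vertices) and of the two $42$-vertex maps are long, so the invariant $\iota$ must be chosen lean enough --- focused on the mutual positions of the rare faces rather than on full radius-$2$ balls --- to stay checkable by hand; otherwise the verification balloons. A minor further point is that $\mathcal{K}_{1},\mathcal{K}_{2},\mathcal{K}_{3}$ are three non-isomorphic maps, so the invariant has to be exhibited as non-constant in each of them separately, though the uniform description produced by Theorem~\ref{theo:sf1} makes those three essentially a single computation. Once a workable $\iota$ is pinned down the rest is routine, and the conclusion can, if desired, be cross-checked by computing $\mathrm{Aut}(\mathcal{K}_i)$ outright.
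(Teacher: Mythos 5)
Your overall strategy is the right one and is essentially the strategy of the paper: reduce $\mathcal{K}_8,\dots,\mathcal{K}_{17}$ to the cited prior work, and for the seven new maps exhibit an automorphism-invariant function on vertices that fails to be constant. The paper's concrete choice of invariant is the degree of $v$ in the graph $G_i(\mathcal{K})$ (where $u$ and $v$ are joined when $|N(u)\cap N(v)|=i$); e.g.\ in $G_4(\mathcal{K}_4)$ some vertices have degree $4$ and others degree $0$, and similarly for $\mathcal{K}_5,\mathcal{K}_6,\mathcal{K}_7$ with suitable $i$. For $\mathcal{K}_1,\mathcal{K}_2,\mathcal{K}_3$ the paper instead invokes its Lemma \ref{lemma2}, which computes $\mathrm{Aut}(\mathcal{K}_1)\cong D_2$, $\mathrm{Aut}(\mathcal{K}_2)\cong\mathbb{Z}_2$, $\mathrm{Aut}(\mathcal{K}_3)\cong D_4$, all of order less than $20$, so transitivity on $20$ vertices is impossible; you could simply have cited that lemma rather than re-deriving an invariant for these three.

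The genuine gap is that you never carry out the decisive step. By the definition of a semi-equivelar map, all radius-$1$ data around a vertex is identical, so the entire content of the proof lies in actually exhibiting, for each of $\mathcal{K}_1,\dots,\mathcal{K}_7$, two specific vertices on which your invariant $\iota$ takes different values. You assert that the pentagon-interlacing "already separates vertices'' and that the threading of the large faces "forces non-uniformity,'' but these are exactly the claims that need verification against the explicit face lists, and nothing a priori guarantees that the particular lean descriptor you settle on will be non-constant (a too-coarse invariant could well be constant on a non-vertex-transitive map, forcing you to refine it). Your opening reduction via the stabiliser embedding into the symmetry group of the cyclic word $p_1^{n_1}\cdots p_k^{n_k}$ is correct but is never used, since you end up arguing non-transitivity of the action directly rather than bounding $|\mathrm{Aut}(\mathcal{K}_i)|$. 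As written, the proposal is a sound plan whose essential computations remain to be done.
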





\section{Some semi-equivelar maps}

\begin{example} \label{eg:8maps-torus}
	{\rm Semi-equivelar maps of types $[4^3, 5^1]$, $[6^2, 7^1],$ $[3^1, 4^1, 7^1, 4^1],$ $[4^1, 6^1, 14^1]$ and $[4^1, 8^1, 10^1]$ on the surface of Euler char. -1.}

\begin{figure}[H]
	\tiny
	\tikzstyle{ver}=[]
	\tikzstyle{vert}=[circle, draw, fill=black!100, inner sep=0pt, minimum width=4pt]
	\tikzstyle{vertex}=[circle, draw, fill=black!00, inner sep=0pt, minimum width=4pt]
	\tikzstyle{edge} = [draw,thick,-]
	\centering
	\begin{tikzpicture}[scale=0.22]
	
	\draw[edge, thin](10,5)--(30,5);
	\draw[edge, thin](10,10)--(30,10);
	\draw[edge, thin](10,5)--(10,10);
	\draw[edge, thin](15,5)--(15,10);
	\draw[edge, thin](20,-5)--(20,15);
	\draw[edge, thin](25,-5)--(25,15);
	\draw[edge, thin](30,5)--(30,10);
	\draw[edge, thin](20,15)--(25,15);
	\draw[edge, thin](20,-5)--(25,-5);
	\draw[edge, thin](20,0)--(25,0);
	
	\draw[edge, thin](15,5)--(14,2)--(17,-1)--(20,0);
	\draw[edge, thin](30,5)--(31,2)--(28,-1)--(25,0);
	\draw[edge, thin](30,10)--(31,13)--(28,16)--(25,15);
	\draw[edge, thin](15,10)--(14,13)--(17,16)--(20,15);
	\draw[edge, thin](14,2)--(12,0)--(15,-3)--(17,-1);
	
	\draw[edge, thin](12,0)--(7,-1)--(8,2)--(10,5);
	\draw[edge, thin](8,2)--(14,2);
	\draw[edge, thin](10,10)--(8,13)--(14,13);
	\draw[edge, thin](20,-5)--(17,-6)--(17,-1);
	\draw[edge, thin](17,-6)--(14,-8)--(15,-3);
	\draw[edge, thin](28,-1)--(31,-1)--(34,2)--(31,2);
	
	\draw[edge, thin](25,-5)--(28,-6)--(28,-1);
	
	\node[ver] () at (8.8,5.5){\scriptsize ${12} $};
	\node[ver] () at (14,5.5){\scriptsize ${3} $};
	\node[ver] () at (19,5.5){\scriptsize ${4} $};
	\node[ver] () at (24,5.5){\scriptsize ${5} $};
	\node[ver] () at (28.9,5.5){\scriptsize ${12} $};
	
	\node[ver] () at (8.7,9.3){\scriptsize ${11} $};
	\node[ver] () at (14,9.3){\scriptsize ${2} $};
	\node[ver] () at (19,9.3){\scriptsize ${1} $};
	\node[ver] () at (24,9.3){\scriptsize ${6} $};
	\node[ver] () at (28.9,9.3){\scriptsize ${11} $};
	
	\node[ver] () at (24,14.3){\scriptsize ${7} $};
	\node[ver] () at (19,14.3){\scriptsize ${8} $};
	
	\node[ver] () at (24,0.5){\scriptsize ${16} $};
	\node[ver] () at (18.9,0.5){\scriptsize ${18} $};
	
	\node[ver] () at (24,-4.5){\scriptsize ${10} $};
	\node[ver] () at (18.9,-4.5){\scriptsize ${13} $};
	
	\node[ver] () at (6.5,2){\scriptsize ${19} $};
	\node[ver] () at (15,2){\scriptsize ${14} $};
	
	\node[ver] () at (6,-.5){\scriptsize ${7} $};
	\node[ver] () at (11.3,-1){\scriptsize ${8} $};
	\node[ver] () at (13.5,-3){\scriptsize ${9} $};
	\node[ver] () at (13,-7.3){\scriptsize ${17} $};
	\node[ver] () at (15.8,-5.5){\scriptsize ${20} $};
	
	\node[ver] () at (27,-5){\scriptsize ${9} $};
	
	\node[ver] () at (29.5,2){\scriptsize ${19} $};
	\node[ver] () at (27,-2){\scriptsize ${17} $};
	\node[ver] () at (31.5,-2){\scriptsize ${20} $};
	\node[ver] () at (34.4,1){\scriptsize ${7} $};
	
	\node[ver] () at (29.5,13){\scriptsize ${13} $};
	\node[ver] () at (29.5,16){\scriptsize ${20} $};
	
	\node[ver] () at (18.2,16.3){\scriptsize ${9} $};
	\node[ver] () at (15.2,12.8){\scriptsize ${10} $};
	
	\node[ver] () at (8.2,13.6){\scriptsize ${13} $};
	
	
	\node[ver] () at (15.5,-1){\scriptsize ${15} $};
	
	\node[ver] () at (20, -10){\small {$ \mathcal{K}_1 $}};
	
	\end{tikzpicture}\hspace{2mm}
	\begin{tikzpicture}[scale=0.22]
	\draw[edge, thin](10,0)--(25,0)--(28,-2);
	\node[ver] () at (10,-1){\scriptsize ${7} $};
	\node[ver] () at (15,-1){\scriptsize ${6} $};
	\node[ver] () at (20,-1){\scriptsize ${5} $};
	\node[ver] () at (25,-1){\scriptsize ${14} $};
	\node[ver] () at (28,-3){\scriptsize ${8} $};
	
	\draw[edge, thin](10,5)--(7,2.5)--(4,5)--(7,7.5)--(10,5)--(25,5)--(28,3);
	\node[ver] () at (11,4){\scriptsize ${8} $};
	\node[ver] () at (16,4){\scriptsize ${1} $};
	\node[ver] () at (21,4){\scriptsize ${4} $};
	\node[ver] () at (25,6){\scriptsize ${19} $};
	\node[ver] () at (29,2.5){\scriptsize ${7} $};
	\node[ver] () at (6,7.5){\scriptsize ${9} $};
	\node[ver] () at (6,1.5){\scriptsize ${14} $};
	\node[ver] () at (3,5){\scriptsize ${15} $};
	
	\draw[edge, thin](25,5)--(28,7.5)--(31,5)--(28,3);
	\node[ver] () at (28,6.5){\scriptsize ${18} $};
	\node[ver] () at (32,5){\scriptsize ${20} $};
	
	\draw[edge, thin](7,7.5)--(10,10)--(25,10)--(28,7.5);
	\node[ver] () at (11,9){\scriptsize ${10} $};
	\node[ver] () at (16,9){\scriptsize ${2} $};
	\node[ver] () at (21,9){\scriptsize ${3} $};
	\node[ver] () at (25,9){\scriptsize ${16} $};
	
	\draw[edge, thin](10,10)--(7,12.5)--(7,17.5)--(10,15)--(25,15)--(28,17.5)--(28,12.5)--(25,10);
	\node[ver] () at (6,12.5){\scriptsize ${16} $};
	\node[ver] () at (6,17.5){\scriptsize ${18} $};
	\node[ver] () at (11,14){\scriptsize ${13} $};
	\node[ver] () at (16,14){\scriptsize ${11} $};
	\node[ver] () at (21,14){\scriptsize ${12} $};
	\node[ver] () at (26,14){\scriptsize ${17} $};
	\node[ver] () at (29,17.5){\scriptsize ${9} $};
	\node[ver] () at (29,12.5){\scriptsize ${10} $};
	
	\draw[edge, thin](7,17.5)--(7,22.5)--(10,20)--(12.5,22.5)--(15,20)--(20,20)--(22.5,22.5)--(25,20)--(28,22.5)--(28,17.5);
	\node[ver] () at (6,22.5){\scriptsize ${20} $};
	\node[ver] () at (11,19){\scriptsize ${15} $};
	\node[ver] () at (12.5,23.5){\scriptsize ${14} $};
	\node[ver] () at (16,19){\scriptsize ${5} $};
	\node[ver] () at (21,19){\scriptsize ${6} $};
	\node[ver] () at (22.5,23.5){\scriptsize ${7} $};
	\node[ver] () at (26,19){\scriptsize ${20} $};
	\node[ver] () at (29,22.5){\scriptsize ${15} $};
	
	\draw[edge, thin](15,0)--(15,20);
	\draw[edge, thin](20,0)--(20,20);
	\draw[edge, thin](10,10)--(10,20);
	\draw[edge, thin](25,10)--(25,20);
	\draw[edge, thin](10,0)--(10,5);
	\draw[edge, thin](25,0)--(25,5);
	\draw[edge, thin](28,-2)--(28,3);
	
	\node[ver] () at (17.5, -5){\small {$ \mathcal{K}_2 $}};
	\end{tikzpicture}
\end{figure}

\begin{figure}[H]
		\scriptsize
		\tikzstyle{ver}=[]
		\tikzstyle{vert}=[circle, draw, fill=black!100, inner sep=0pt, minimum width=4pt]
		\tikzstyle{vertex}=[circle, draw, fill=black!00, inner sep=0pt, minimum width=4pt]
		\tikzstyle{edge} = [draw,thick,-]
		\centering
	\tikzstyle{ver}=[]
	\tikzstyle{vert}=[circle, draw, fill=black!100, inner sep=0pt, minimum width=4pt]
	\tikzstyle{vertex}=[circle, draw, fill=black!00, inner sep=0pt, minimum width=4pt]
	\tikzstyle{edge} = [draw,thick,-]
	\centering
	\begin{tikzpicture}[scale=0.2]
	\draw[edge, thin](10,0)--(25,0)--(28,-2);
	\node[ver] () at (10,-1){\scriptsize ${7} $};
	\node[ver] () at (15,-1){\scriptsize ${6} $};
	\node[ver] () at (20,-1){\scriptsize ${5} $};
	\node[ver] () at (25,-1){\scriptsize ${14} $};
	\node[ver] () at (28,-3){\scriptsize ${8} $};
	
	\draw[edge, thin](10,5)--(7,2.5)--(4,5)--(7,7.5)--(10,5)--(25,5)--(28,3);
	\node[ver] () at (11,4){\scriptsize ${8} $};
	\node[ver] () at (16,4){\scriptsize ${1} $};
	\node[ver] () at (21,4){\scriptsize ${4} $};
	\node[ver] () at (25,6){\scriptsize ${19} $};
	\node[ver] () at (29,2.5){\scriptsize ${7} $};
	\node[ver] () at (6,7.5){\scriptsize ${9} $};
	\node[ver] () at (6,1.5){\scriptsize ${14} $};
	\node[ver] () at (3,4){\scriptsize ${15} $};
	
	\draw[edge, thin](25,5)--(28,7.5)--(31,5)--(28,3);
	\node[ver] () at (28,6.5){\scriptsize ${17} $};
	\node[ver] () at (32,4){\scriptsize ${20} $};
	
	\draw[edge, thin](7,7.5)--(10,10)--(25,10)--(28,7.5);
	\node[ver] () at (11,9){\scriptsize ${10} $};
	\node[ver] () at (16,9){\scriptsize ${2} $};
	\node[ver] () at (21,9){\scriptsize ${3} $};
	\node[ver] () at (25,9){\scriptsize ${16} $};
	
	\draw[edge, thin](10,10)--(7,12.5)--(7,17.5)--(10,15)--(25,15);
	\node[ver] () at (6,12.5){\scriptsize ${16} $};
	\node[ver] () at (6,17.5){\scriptsize ${18} $};
	\node[ver] () at (11,14){\scriptsize ${13} $};
	\node[ver] () at (16,14){\scriptsize ${11} $};
	\node[ver] () at (21,14){\scriptsize ${12} $};
	\node[ver] () at (26,14){\scriptsize ${18} $};
	
	\draw[edge, thin](7,17.5)--(7,22.5)--(10,20)--(12.5,22.5)--(15,20)--(20,20)--(22.5,22.5)--(25,20);
	\node[ver] () at (6,22.5){\scriptsize ${20} $};
	\node[ver] () at (11,19){\scriptsize ${15} $};
	\node[ver] () at (12.5,23.5){\scriptsize ${14} $};
	\node[ver] () at (16,19){\scriptsize ${5} $};
	\node[ver] () at (21,19){\scriptsize ${6} $};
	\node[ver] () at (22.5,23.5){\scriptsize ${7} $};
	\node[ver] () at (26,19){\scriptsize ${20} $};
	
	\draw[edge, thin](15,0)--(15,20);
	\draw[edge, thin](20,0)--(20,20);
	\draw[edge, thin](10,10)--(10,20);
	\draw[edge, thin](25,10)--(25,20);
	\draw[edge, thin](10,0)--(10,5);
	\draw[edge, thin](25,0)--(25,5);
	\draw[edge, thin](28,-2)--(28,3);
	
	\draw[edge, thin](25,10)--(28,12.5)--(31,9.8)--(28,7.5);
	\node[ver] () at (28,13.5){\scriptsize ${10} $};
	\node[ver] () at (32,9.8){\scriptsize ${9} $};
	
	\draw[edge, thin](7,7.5)--(4,10)--(1,7.5)--(4,5);
	\node[ver] () at (2.5,7.5){\scriptsize ${20} $};
	\node[ver] () at (4,11){\scriptsize ${17} $};
	
	\node[ver] () at (17.5, -5){\small {$ \mathcal{K}_3 $}};
	\end{tikzpicture}
	\begin{tikzpicture}[scale=1.3]
		\draw[edge, thin] (0,0) -- (0.6,0.3) -- (1.2,0.1) -- (1.5,-0.5) -- (1.2,-1.1) -- (0.6,-1.3) -- (0,-1) -- (0,0);
		
		\node[ver] () at (0.1,-0.1){\scriptsize{${1} $} };
		\node[ver] () at (0.6,0.15){\scriptsize{${2} $} };
		\node[ver] () at (1.1,0){\scriptsize{${3} $} };
		\node[ver] () at (1.35,-0.5){\scriptsize{${4} $} };
		\node[ver] () at (1.1,-0.95){\scriptsize{${5} $} };
		\node[ver] () at (0.6,-1.15){\scriptsize{${6} $} };
		\node[ver] () at (0.15,-0.95){\scriptsize{${7} $} };

		\draw[edge, thin] (0,0) -- (-1,0) -- (-1,-1) -- (0,-1);
		
		\node[ver] () at (-0.9,-0.15){\scriptsize{${9} $} };
		\node[ver] () at (-0.9,-0.9){\scriptsize{${8} $} };

		\draw[edge, thin] (-1,0) -- (-1.6,0.3) -- (-2.2,0.1) -- (-2.5,-0.6) -- (-2.2,-1.1) -- (-1.6, -1.3) -- (-1,-1);
		
		\node[ver] () at (-1.6,0.15){\scriptsize{${12} $} };
		\node[ver] () at (-2.1,-0.0){\scriptsize{${13} $} };
		\node[ver] () at (-2.3,-0.6){\scriptsize{${14} $} };
		\node[ver] () at (-2.1,-1){\scriptsize{${15} $} };
		\node[ver] () at (-1.6,-1.15){\scriptsize{${16} $} };
		
		\draw[edge, thin] (-2.2,0.1) -- (-2.5,0.3) -- (-2.2,0.6);
		
		\node[ver] () at (-2.65,0.3){\scriptsize{${27} $} };
		\node[ver] () at (-2.95,-0.3){\scriptsize{${28} $} };
		\node[ver] () at (-2.95,-0.7){\scriptsize{${3} $} };
		\node[ver] () at (-2.5,-1.4){\scriptsize{${4} $} };
		
		\draw[edge, thin] (-2.5,0.3) -- (-2.8,-0.3) -- (-2.5,-0.6);
		\draw[edge, thin] (-2.8,-0.3) -- (-2.8,-0.7);
		\draw[edge, thin] (-2.5,-0.6) -- (-2.8,-0.7) -- (-2.45,-1.3) -- (-2.2,-1.1);

		\draw[edge, thin] (-1,-1) -- (-0.5,-1.5) -- (0,-1);
		
		\draw[edge, thin] (-0.5,-1.5) -- (0.2,-1.75) -- (0.6,-1.3);
		
		\draw[edge, thin] (-0.5,-1.5) -- (-1.2,-1.75) -- (-1.6,-1.3);
		
		\draw[edge, thin] (-1.2,-1.75) -- (-1.2, -2.2) -- (-0.9,-2.6) -- (-0.1,-2.6) -- (0.2,-2.2) -- (0.2, -1.75);
		
		\node[ver] () at (-0.5,-1.65){\scriptsize{${29} $} };
		\node[ver] () at (-1.4,-1.75){\scriptsize{${35} $} };
		\node[ver] () at (-1.4,-2.2){\scriptsize{${34} $} };
		\node[ver] () at (-0.9,-2.7){\scriptsize{${33} $} };
		\node[ver] () at (-0.1,-2.7){\scriptsize{${32} $} };
		\node[ver] () at (0.4,-2.2){\scriptsize{${31} $} };
		\node[ver] () at (0.4,-1.75){\scriptsize{${30} $} };
		
		\draw[edge, thin] (-1,0) -- (-0.5,0.5) -- (0,0);
		
		\draw[edge, thin] (-0.5,0.5) -- (-1.2,0.75) -- (-1.2, 1.2) -- (-0.9,1.6) -- (-0.1,1.6) -- (0.2,1.2) -- (0.2, 0.75) -- (-0.5,0.5);
		
		\node[ver] () at (-0.5,0.65){\scriptsize{${10} $} };
		\node[ver] () at (-1,0.8){\scriptsize{${21} $} };
		\node[ver] () at (-1,1.2){\scriptsize{${20} $} };
		\node[ver] () at (-0.8,1.45){\scriptsize{${19} $} };
		\node[ver] () at (-0.2,1.45){\scriptsize{${18} $} };
		\node[ver] () at (0,1.15){\scriptsize{${17} $} };
		\node[ver] () at (0,0.8){\scriptsize{${11} $} };
		\node[ver] () at (-1.45,1.65){\scriptsize{${5} $} };
		\node[ver] () at (-1.25,2){\scriptsize{${4} $} };
		\node[ver] () at (-0.9,2.2){\scriptsize{${15} $} };
		\node[ver] () at (-0.1,2.2){\scriptsize{${16} $} };
		\node[ver] () at (0.25,2){\scriptsize{${35} $} };
		\node[ver] () at (0.45,1.7){\scriptsize{${34} $} };
		
		\draw[edge, thin] (-1.2, 0.75) -- (-1.7,0.75) -- (-1.6,0.3) -- (-1.2,0.75);
		
		\draw[edge, thin] (0.2, 0.75) -- (0.7,0.75) -- (0.6,0.3) -- (0.2,0.75);
		
		\draw[edge, thin] (-1.7,0.75) -- (-1.7,1.2) -- (-1.2,1.2);
		
		\draw[edge, thin] (0.7,0.75) -- (0.7,1.2) -- (0.2,1.2);
		
		\draw[edge, thin] (-1.7,0.75) -- (-2.2,0.6) -- (-2.2,0.1);
		
		\draw[edge, thin] (0.7,0.75) -- (1.2,0.6) -- (1.2,0.1);
		
		\draw[edge, thin] (-2.2, 0.6) -- (-2.7, 0.6) -- (-3.2, 1.1) -- (-2.8,1.6) -- (-2.2,1.6) -- (-1.7, 1.2);

		\draw[edge, thin] (1.2, 0.6) -- (1.7, 0.6) -- (2.2, 1.1) -- (1.8,1.6) -- (1.2,1.6) -- (0.7, 1.2); 
		
		\node[ver] () at (1.25,0.75){\scriptsize{${28} $} };
		\node[ver] () at (1.65,0.75){\scriptsize{${27} $} };
		\node[ver] () at (2,1.1){\scriptsize{${26} $} };
		\node[ver] () at (1.75,1.45){\scriptsize{${25} $} };
		\node[ver] () at (1.25,1.45){\scriptsize{${24} $} };
		\node[ver] () at (0.85,1.15){\scriptsize{${23} $} };
		\node[ver] () at (0.85,0.85){\scriptsize{${22} $} };
		\node[ver] () at (0.85,2){\scriptsize{${33} $} };
		\node[ver] () at (2.05,1.9){\scriptsize{${31} $} };
		\node[ver] () at (2.6,1.35){\scriptsize{${32} $} };
		\node[ver] () at (2.6,0.9){\scriptsize{${41} $} };
		\node[ver] () at (1.9,0.25){\scriptsize{${42} $} };
		
		\node[ver] () at (-2.25,0.75){\scriptsize{${42} $} };
		\node[ver] () at (-2.65,0.75){\scriptsize{${41} $} };
		\node[ver] () at (-3,1.1){\scriptsize{${40} $} };
		\node[ver] () at (-2.75,1.45){\scriptsize{${39} $} };
		\node[ver] () at (-2.25,1.45){\scriptsize{${38} $} };
		\node[ver] () at (-1.85,1.15){\scriptsize{${37} $} };
		\node[ver] () at (-1.85,0.85){\scriptsize{${36} $} };
		\node[ver] () at (-1.85,2){\scriptsize{${6} $} };
		\node[ver] () at (-3.2,1.9){\scriptsize{${25} $} };
		\node[ver] () at (-3.6,1.35){\scriptsize{${24} $} };
		\node[ver] () at (-3.6,0.9){\scriptsize{${33} $} };
		\node[ver] () at (-3,0.25){\scriptsize{${32} $} };
		\node[ver] () at (-2.75,2.1){\scriptsize{${31} $} };
		\node[ver] () at (-2.25,2.1){\scriptsize{${30} $} };

		\draw[edge, thin] (-1.7, 1.2) -- (-1.45, 1.5) -- (-1.2,1.2);

		\draw[edge, thin] (-1.45,1.5) -- (-1.9,1.9) -- (-2.2,1.6);
		
		\draw[edge, thin] (-2.2,1.6) -- (-2.2,2) -- (-2.8,2) -- (-2.8,1.6);
		\draw[edge, thin] (-1.9,1.9) -- (-2.2,2);
		
		\draw[edge, thin] (-2.8,1.6) -- (-3.1,1.8) -- (-3.45,1.4) -- (-3.2,1.1);
		\draw[edge, thin] (-2.8,2) -- (-3.1,1.8);
		
		\draw[edge, thin] (-3.2,1.1) -- (-3.45, 0.9) -- (-2.9,0.35) -- (-2.7,0.6);
		\draw[edge, thin] (-3.45,1.4) -- (-3.45,0.9);

		\draw[edge, thin] (0.45,1.5) -- (0.9,1.9) -- (1.2,1.6);
		
		
		\draw[edge, thin] (1.8,1.6) -- (2.1,1.8) -- (2.45,1.4) -- (2.2,1.1);
		
		\draw[edge, thin] (2.2,1.1) -- (2.45, 0.9) -- (1.9,0.35) -- (1.7,0.6);
		\draw[edge, thin] (2.45,1.4) -- (2.45,0.9);

		\draw[edge, thin] (-1.45,1.5) -- (-1.2,1.9) -- (-0.9,1.6);
		
		\draw[edge, thin] (0.7, 1.2) -- (0.45, 1.5) -- (0.2,1.2);
		
		\draw[edge, thin] (0.45,1.5) -- (0.2,1.9) -- (-0.1,1.6);
		
		\draw[edge, thin] (-0.9,1.6) -- (-0.9,2.1) -- (-0.1,2.1) -- (-0.1,1.6);
		
		\draw[edge, thin] (-1.2,1.9) -- (-0.9,2.1) -- (-0.1,2.1) -- (0.2,1.9);
		
	\node[ver] () at (1.5, -2){\small {$ \mathcal{K}_{4} $}};	
		\end{tikzpicture}
	\label{exm1}

	\begin{center}
		\psscalebox{0.9 0.9} 
		{
			\begin{pspicture}(6.5,23.455)(10.49,31.205)
			\pspolygon[linecolor=black, linewidth=0.02](4.52,27.175)(3.66,26.735)(3.46,25.935)(4.0,25.235)(5.04,25.255)(5.52,25.875)(5.42,26.655)(5.42,26.635)
			\psline[linecolor=black, linewidth=0.02](4.52,27.175)(4.6,27.795)(5.3,28.215)(6.02,27.795)(6.04,27.075)(5.44,26.615)
			\psline[linecolor=black, linewidth=0.02](6.0,27.075)(6.68,26.735)(6.74,25.955)(6.28,25.635)(5.52,25.895)
			\psline[linecolor=black, linewidth=0.02](6.3,25.655)(6.46,25.075)(6.1,24.575)(5.34,24.655)
			\psline[linecolor=black, linewidth=0.02](5.06,25.235)(5.36,24.635)(5.02,23.995)(4.14,23.975)(3.74,24.575)(4.02,25.255)
			\psline[linecolor=black, linewidth=0.02](3.72,24.575)(2.88,24.455)(2.42,25.075)(2.64,25.775)(3.48,25.955)
			\psline[linecolor=black, linewidth=0.02](2.62,25.775)(2.1,26.335)(2.36,27.115)(3.08,27.295)(3.66,26.755)
			\psline[linecolor=black, linewidth=0.02](3.04,27.315)(3.18,28.055)(4.0,28.355)(4.6,27.815)
			\rput[bl](4.38,26.735){2}
			\rput[bl](3.78,26.455){1}
			\rput[bl](3.66,25.895){7}
			\rput[bl](4.04,25.435){6}
			\rput[bl](4.92,25.435){5}
			\rput[bl](5.14,25.875){4}
			\rput[bl](5.06,26.355){3}
			\rput[bl](4.62,27.495){11}
			\rput[bl](5.1,27.755){17}
			\rput[bl](5.54,27.535){22}
			\rput[bl](5.54,27.035){28}
			\rput[bl](6.18,26.455){27}
			\rput[bl](5.84,25.335){15}
			\rput[bl](6.16,25.895){14}
			\rput[bl](6.42,24.795){16}
			\rput[bl](5.88,24.195){19}
			\rput[bl](5.32,24.175){20}
			\rput[bl](4.88,23.615){21}
			\rput[bl](3.96,23.615){37}
			\rput[bl](3.38,24.135){38}
			\rput[bl](2.84,24.575){39}
			\rput[bl](2.14,24.515){31}
			\rput[bl](2.12,25.535){30}
			\rput[bl](2.34,26.235){29}
			\rput[bl](2.42,26.795){8}
			\rput[bl](2.94,26.855){9}
			\rput[bl](3.22,27.755){12}
			\rput[bl](3.78,27.895){10}
			\psline[linecolor=black, linewidth=0.02](6.0,27.815)(6.7,28.495)(7.78,28.595)(8.1,27.855)(7.58,27.055)(6.68,26.715)
			\psline[linecolor=black, linewidth=0.02](4.02,28.335)(3.86,29.135)(4.24,29.935)(5.2,29.915)(5.56,29.015)(5.3,28.175)
			\psline[linecolor=black, linewidth=0.02](2.38,27.115)(1.54,27.415)(1.26,28.135)(1.64,28.735)(2.58,28.655)(3.18,28.075)
			\psline[linecolor=black, linewidth=0.02](6.7,28.515)(6.5,28.995)(6.8,29.575)(7.56,29.615)(7.96,29.035)(7.78,28.555)
			\psline[linecolor=black, linewidth=0.02](6.72,25.975)(7.36,25.615)(8.02,25.915)(8.16,26.435)
			\rput[bl](4.02,29.015){21}
			\rput[bl](4.2,29.575){20}
			\rput[bl](4.8,29.535){19}
			\rput[bl](4.98,28.935){18}
			\rput[bl](4.84,28.215){17}
			\rput[bl](6.62,28.135){23}
			\rput[bl](7.38,28.175){24}
			\rput[bl](7.66,27.715){25}
			\rput[bl](7.16,27.055){26}
			\rput[bl](2.18,28.335){13}
			\rput[bl](1.2,28.775){14}
			\rput[bl](0.78,28.055){15}
			\rput[bl](0.96,27.395){16}
			\rput[bl](7.22,25.295){13}
			\rput[bl](7.96,25.415){36}
			\rput[bl](8.3,26.075){42}
			\rput[bl](7.98,29.195){40}
			\rput[bl](7.48,29.695){41}
			\rput[bl](6.56,29.715){33}
			\rput[bl](6.7,28.815){34}
			\psline[linecolor=black, linewidth=0.02](1.54,27.415)(0.94,27.115)(0.9,26.475)(1.42,26.075)(2.1,26.315)
			\rput[bl](0.46,26.975){19}
			\rput[bl](0.46,26.235){18}
			\rput[bl](1.44,25.735){35}
			\psline[linecolor=black, linewidth=0.02](2.56,28.675)(2.76,29.395)(3.4,29.535)(3.86,29.115)
			\psline[linecolor=black, linewidth=0.02](5.54,29.015)(6.12,29.335)(6.5,28.995)
			\rput[bl](5.84,29.415){35}
			\rput[bl](3.12,29.095){37}
			\rput[bl](2.18,29.135){36}
			\psline[linecolor=black, linewidth=0.02](2.76,29.395)(2.2,29.815)(2.18,30.535)(2.74,30.995)(3.46,30.815)(3.7,30.155)(3.42,29.535)
			\rput[bl](1.76,29.555){42}
			\rput[bl](1.7,30.395){41}
			\rput[bl](2.48,31.095){40}
			\rput[bl](3.6,30.815){39}
			\rput[bl](3.76,30.015){38}
			\psline[linecolor=black, linewidth=0.02](1.44,26.095)(1.06,25.595)(1.22,24.955)(1.76,24.735)(2.42,25.075)
			\rput[bl](0.58,25.455){34}
			\psline[linecolor=black, linewidth=0.02](1.74,24.755)(1.56,24.135)(2.04,23.675)(2.66,23.835)(2.88,24.455)
			\rput[bl](1.52,24.895){32}
			\rput[bl](1.26,23.655){25}
			\rput[bl](1.66,23.295){24}
			\rput[bl](2.62,23.455){40}
			\psline[linecolor=black, linewidth=0.02](1.2,24.975)(0.56,24.675)(0.42,24.095)(0.94,23.855)(1.58,24.135)
			\rput[bl](0.14,24.535){41}
			\rput[bl](0.0,23.855){42}
			\rput[bl](0.66,23.515){26}
			\rput[bl](0.96,24.535){33}
			\psline[linecolor=black, linewidth=0.02](7.56,27.055)(8.16,26.435)
			\rput[bl](7,24.5){\large{$ \mathcal{K}_{5} $}}
			\end{pspicture}
		}
		
		\psscalebox{1 1} 
	{
		\begin{pspicture}(-7.5,-7.165)(6.21,-10.91)
		\pspolygon[linecolor=black, linewidth=0.02](2.1,-3.425)(1.6,-3.905)(1.6,-4.685)(2.08,-5.185)(2.92,-5.185)(3.36,-4.685)(3.34,-3.945)(2.92,-3.425)
		\psline[linecolor=black, linewidth=0.02](2.08,-3.425)(2.1,-3.025)(2.92,-3.025)(2.92,-3.425)
		\psline[linecolor=black, linewidth=0.02](3.32,-3.925)(3.8,-3.945)(3.8,-4.685)(3.34,-4.685)
		\psline[linecolor=black, linewidth=0.02](2.08,-5.165)(2.08,-5.605)(2.92,-5.625)(2.92,-5.185)
		\psline[linecolor=black, linewidth=0.02](1.6,-3.885)(1.18,-3.885)(1.18,-4.685)(1.6,-4.685)
		\psline[linecolor=black, linewidth=0.02](2.9,-3.025)(3.2,-2.585)(3.66,-2.445)(4.2,-2.545)(4.5,-2.905)(4.5,-3.405)(4.24,-3.785)(3.78,-3.945)
		\psline[linecolor=black, linewidth=0.02](2.9,-5.605)(3.14,-6.025)(3.6,-6.245)(4.12,-6.185)(4.5,-5.785)(4.52,-5.225)(4.18,-4.805)(3.78,-4.685)
		\psline[linecolor=black, linewidth=0.02](1.2,-3.885)(0.76,-3.665)(0.58,-3.225)(0.58,-2.785)(0.96,-2.405)(1.54,-2.385)(1.92,-2.625)(2.1,-3.025)
		\psline[linecolor=black, linewidth=0.02](1.2,-4.685)(0.76,-4.865)(0.56,-5.265)(0.54,-5.705)(0.84,-6.105)(1.4,-6.245)(1.86,-6.005)(2.08,-5.585)
		\psline[linecolor=black, linewidth=0.02](3.64,-2.445)(3.4,-2.025)(3.42,-1.585)(3.72,-1.245)(4.28,-1.245)(4.6,-1.625)(4.62,-2.165)(4.2,-2.545)
		\psline[linecolor=black, linewidth=0.02](0.96,-2.405)(0.64,-2.005)(0.62,-1.525)(1.0,-1.145)(1.58,-1.165)(1.88,-1.485)(1.88,-2.025)(1.52,-2.385)
		\psline[linecolor=black, linewidth=0.02](4.5,-5.225)(4.88,-4.905)(5.42,-4.925)(5.76,-5.265)(5.76,-5.785)(5.42,-6.145)(4.84,-6.145)(4.48,-5.765)
		\psline[linecolor=black, linewidth=0.02](4.6,-2.185)(4.92,-2.565)(4.5,-2.905)
		\psline[linecolor=black, linewidth=0.02](3.38,-2.025)(2.92,-2.205)(3.18,-2.605)
		\psline[linecolor=black, linewidth=0.02](1.86,-2.045)(2.24,-2.225)(1.92,-2.645)
		\psline[linecolor=black, linewidth=0.02](0.64,-1.985)(0.24,-2.365)(0.58,-2.785)
		\psline[linecolor=black, linewidth=0.02](4.82,-6.145)(4.46,-6.505)(4.12,-6.165)
		\psline[linecolor=black, linewidth=0.02](4.86,-4.925)(4.56,-4.525)(4.18,-4.805)
		\psline[linecolor=black, linewidth=0.02](1.84,-5.985)(1.86,-6.425)(2.2,-6.825)(2.78,-6.845)(3.12,-6.445)(3.12,-6.005)
		\rput[bl](2.08,-3.785){\scriptsize{26}}
		\rput[bl](2.74,-3.805){\scriptsize{1}}
		\rput[bl](2.86,-4.165){\scriptsize{25}}
		\rput[bl](3.0,-4.745){\scriptsize{6}}
		\rput[bl](2.62,-5.025){\scriptsize{35}}
		\rput[bl](1.72,-5.345){\scriptsize{5}}
		\rput[bl](1.72,-4.725){\scriptsize{49}}
		\rput[bl](1.72,-4.085){\scriptsize{4}}
		\rput[bl](3.64,-3.825){\scriptsize{8}}
		\rput[bl](4.22,-4.125){\scriptsize{24}}
		\rput[bl](4.56,-3.585){\scriptsize{9}}
		\rput[bl](4.58,-3.165){\scriptsize{23}}
		\rput[bl](3.92,-2.405){\scriptsize{10}}
		\rput[bl](3.5,-2.805){\scriptsize{22}}
		\rput[bl](3.16,-2.925){\scriptsize{2}}
		\rput[bl](2.6,-2.945){\scriptsize{21}}
		\rput[bl](1.82,-3.225){\scriptsize{3}}
		\rput[bl](1.48,-2.925){\scriptsize{30}}
		\rput[bl](1.16,-2.285){\scriptsize{14}}
		\rput[bl](0.82,-2.765){\scriptsize{29}}
		\rput[bl](0.12,-2.985){\scriptsize{15}}
		\rput[bl](0.16,-3.525){\scriptsize{28}}
		\rput[bl](0.42,-3.985){\scriptsize{18}}
		\rput[bl](1.1,-3.785){\scriptsize{27}}
		\rput[bl](1.12,-5.025){\scriptsize{16}}
		\rput[bl](0.34,-4.925){\scriptsize{39}}
		\rput[bl](0.12,-5.465){\scriptsize{17}}
		\rput[bl](0.14,-6.005){\scriptsize{38}}
		\rput[bl](0.48,-6.445){\scriptsize{19}}
		\rput[bl](1.2,-6.585){\scriptsize{37}}
		\rput[bl](1.46,-5.985){\scriptsize{12}}
		\rput[bl](1.52,-6.765){\scriptsize{30}}
		\rput[bl](2.0,-7.145){\scriptsize{3}}
		\rput[bl](2.74,-7.165){\scriptsize{21}}
		\rput[bl](3.14,-6.745){\scriptsize{2}}
		\rput[bl](2.96,-5.745){\scriptsize{11}}
		\rput[bl](2.66,-6.265){\scriptsize{34}}
		\rput[bl](3.4,-6.145){\scriptsize{13}}
		\rput[bl](3.82,-6.545){\scriptsize{33}}
		\rput[bl](4.3,-6.865){\scriptsize{18}}
		\rput[bl](4.78,-6.565){\scriptsize{28}}
		\rput[bl](5.36,-6.485){\scriptsize{15}}
		\rput[bl](5.86,-6.005){\scriptsize{9}}
		\rput[bl](5.82,-5.465){\scriptsize{23}}
		\rput[bl](5.5,-4.965){\scriptsize{17}}
		\rput[bl](4.9,-4.785){\scriptsize{38}}
		\rput[bl](4.56,-5.485){\scriptsize{32}}
		\rput[bl](4.04,-5.865){\scriptsize{20}}
		\rput[bl](4.42,-4.445){\scriptsize{19}}
		\rput[bl](4.04,-5.225){\scriptsize{7}}
		\rput[bl](3.58,-5.065){\scriptsize{31}}
		\rput[bl](4.96,-2.765){\scriptsize{17}}
		\rput[bl](4.72,-2.245){\scriptsize{39}}
		\rput[bl](4.72,-1.725){\scriptsize{16}}
		\rput[bl](4.24,-1.205){\scriptsize{27}}
		\rput[bl](3.54,-1.145){\scriptsize{18}}
		\rput[bl](2.96,-1.605){\scriptsize{33}}
		\rput[bl](3.54,-2.125){\scriptsize{13}}
		\rput[bl](2.6,-2.185){\scriptsize{34}}
		\rput[bl](2.22,-2.425){\scriptsize{12}}
		\rput[bl](1.46,-2.085){\scriptsize{37}}
		\rput[bl](2.0,-1.525){\scriptsize{19}}
		\rput[bl](1.68,-1.125){\scriptsize{7}}
		\rput[bl](0.76,-1.085){\scriptsize{31}}
		\rput[bl](0.36,-1.565){\scriptsize{8}}
		\rput[bl](0.16,-2.045){\scriptsize{24}}
		\rput[bl](0.0,-2.545){\scriptsize{9}}
		\rput[bl](3.5,-7.5){\large{$ \mathcal{K}_{7} $}}
		\end{pspicture}

	}
%
%
			\psscalebox{1.0 1.0} 
			{
				\begin{pspicture}(1.0,25.150082)(12.204464,30.55)
				\pspolygon[linecolor=black, linewidth=0.02](5.6961904,29.57)(4.9961905,29.21)(4.5761905,28.71)(4.3761907,28.09)(4.4761906,27.39)(4.8361907,26.77)(5.4961905,26.37)(6.3761907,26.33)(7.1161904,26.73)(7.5361905,27.31)(7.6561904,28.03)(7.4361906,28.73)(7.0561905,29.19)(6.4961905,29.51)(6.4961905,29.51)
				\pspolygon[linecolor=black, linewidth=0.02](5.038254,29.22)(4.8063498,29.592962)(5.4525356,29.932844)(5.6461906,29.540361)(5.6886783,29.561037)
				\pspolygon[linecolor=black, linewidth=0.02](6.4761906,29.515429)(6.6842985,29.86)(7.3061905,29.496286)(7.100515,29.19)(7.059704,29.17)
				\pspolygon[linecolor=black, linewidth=0.02](7.4161906,28.75)(7.8961906,28.87)(8.076191,28.19)(7.6361904,28.07)
				\pspolygon[linecolor=black, linewidth=0.02](7.5561905,27.33)(7.8761907,27.11)(7.4561906,26.51)(7.1161904,26.71)(7.1161904,26.73)
				\pspolygon[linecolor=black, linewidth=0.02](5.4761906,26.39)(5.4361906,25.95)(6.3361907,25.91)(6.3361907,26.33)
				\pspolygon[linecolor=black, linewidth=0.02](4.4761906,27.41)(4.0161905,27.19)(4.4161906,26.53)(4.8361907,26.77)(4.8161907,26.77)
				\pspolygon[linecolor=black, linewidth=0.02](4.5961905,28.73)(4.1761904,28.95)(3.9361906,28.27)(4.3761907,28.13)(4.3761907,28.13)
				\psline[linecolor=black, linewidth=0.02](5.4361906,29.93)(5.7761903,30.41)(6.4361906,30.37)(6.6761904,29.85)
				\psline[linecolor=black, linewidth=0.02](7.2961903,29.49)(7.8161907,29.63)(8.076191,29.29)(7.8761907,28.87)
				\psline[linecolor=black, linewidth=0.02](8.03619,28.19)(8.476191,27.87)(8.396191,27.27)(7.8761907,27.09)
				\psline[linecolor=black, linewidth=0.02](6.3361907,25.93)(6.7961903,25.67)(7.4161906,25.99)(7.4561906,26.51)
				\psline[linecolor=black, linewidth=0.02](4.4161906,26.55)(4.3561907,26.05)(4.9361906,25.67)(5.4361906,25.95)
				\psline[linecolor=black, linewidth=0.02](4.0361905,27.19)(3.5561905,27.37)(3.4961905,27.95)(3.9361906,28.25)
				\psline[linecolor=black, linewidth=0.02](4.1761904,28.95)(4.0361905,29.37)(4.3161907,29.69)(4.8161907,29.61)
				\rput[bl](5.6961904,29.27){\scriptsize{1}}
				\rput[bl](5.0161905,28.91){\scriptsize{$ 43 $}}
				\rput[bl](4.7561903,28.51){\scriptsize{7}}
				\rput[bl](4.5361905,28.01){\scriptsize{44}}
				\rput[bl](4.5561905,27.39){\scriptsize{6}}
				\rput[bl](4.8761907,26.77){\scriptsize{45}}
				\rput[bl](5.5561905,26.47){\scriptsize{5}}
				\rput[bl](6.1761904,26.43){\scriptsize{46}}
				\rput[bl](6.1961904,29.23){\scriptsize{49}}
				\rput[bl](6.7761903,28.87){\scriptsize{2}}
				\rput[bl](7.0361905,28.51){\scriptsize{48}}
				\rput[bl](7.3761907,27.93){\scriptsize{3}}
				\rput[bl](7.1361904,27.27){\scriptsize{47}}
				\rput[bl](6.9361906,26.77){\scriptsize{4}}
				\rput[bl](5.5761905,29.83){\scriptsize{9}}
				\rput[bl](6.3161907,29.83){\scriptsize{62}}
				\rput[bl](4.5761905,29.29){\scriptsize{84}}
				\rput[bl](7.3961906,29.27){\scriptsize{11}}
				\psline[linecolor=black, linewidth=0.02](6.4361906,30.37)(6.6161904,30.99)(6.9161906,31.57)(7.3161907,32.03)(7.8561907,32.41)(8.516191,32.61)(9.016191,32.35)(9.156191,31.73)(8.99619,31.01)(8.696191,30.49)(8.276191,29.99)(7.7961903,29.61)
				\psline[linecolor=black, linewidth=0.02](3.4961905,27.97)(2.8761904,27.89)(2.2361906,27.93)(1.6361905,28.15)(1.1161904,28.51)(0.8161905,28.97)(0.9961905,29.57)(1.6561905,29.89)(2.3761904,29.95)(3.1361904,29.85)(3.6161904,29.65)(4.0161905,29.37)(4.0161905,29.37)
				\psline[linecolor=black, linewidth=0.02](8.076191,29.27)(8.616191,29.61)(9.25619,29.85)(10.03619,29.91)(10.03619,29.91)(10.75619,29.81)(11.21619,29.51)(11.276191,28.95)(10.91619,28.49)(10.29619,28.11)(9.71619,27.91)(9.0561905,27.83)(8.436191,27.91)(8.436191,27.91)
				\psline[linecolor=black, linewidth=0.02](4.2961903,29.69)(3.8761904,30.05)(3.5561905,30.49)(3.2761905,31.01)(3.1761904,31.65)(3.2961905,32.25)(3.7761905,32.59)(4.3961906,32.47)(4.9161906,32.15)(5.3161907,31.71)(5.6761904,31.05)(5.7961903,30.37)(5.7961903,30.37)
				\psline[linecolor=black, linewidth=0.02](5.6761904,31.03)(6.6161904,31.01)(6.6161904,31.01)
				\psline[linecolor=black, linewidth=0.02](5.3161907,31.69)(5.7161903,32.09)(6.5961905,32.05)(6.9161906,31.57)(6.8961906,31.55)
				\psline[linecolor=black, linewidth=0.02](5.7361903,32.11)(5.2161903,32.47)(4.9761906,32.91)(4.8961906,33.49)(5.1161904,34.07)(5.5161905,34.49)(6.0761905,34.69)(6.7161903,34.63)(7.1561904,34.33)(7.4561906,33.97)(7.5961905,33.31)(7.3961906,32.69)(7.0561905,32.29)(6.5561905,32.05)
				\psline[linecolor=black, linewidth=0.02](4.9161906,32.15)(5.2361903,32.47)
				\psline[linecolor=black, linewidth=0.02](7.0361905,32.29)(7.3161907,32.01)
				\psline[linecolor=black, linewidth=0.02](8.276191,29.99)(8.59619,29.57)
				\psline[linecolor=black, linewidth=0.02](3.5561905,29.69)(3.8561904,30.05)
				\rput[bl](6.6561904,30.27){\scriptsize{10}}
				\rput[bl](6.7161903,30.93){\scriptsize{56}}
				\rput[bl](6.9961905,31.43){\scriptsize{21}}
				\rput[bl](7.3361907,31.73){\scriptsize{75}}
				\rput[bl](7.7761903,32.09){\scriptsize{20}}
				\rput[bl](8.29619,32.71){\scriptsize{}58}
				\rput[bl](9.13619,32.37){\scriptsize{19}}
				\rput[bl](9.316191,31.63){\scriptsize{59}}
				\rput[bl](8.67619,31.09){\scriptsize{18}}
				\rput[bl](8.45619,30.61){\scriptsize{60}}
				\rput[bl](8.09619,30.15){\scriptsize{17}}
				\rput[bl](7.6561904,29.71){\scriptsize{61}}
				\rput[bl](8.016191,28.81){\scriptsize{69}}
				\rput[bl](8.156191,29.13){\scriptsize{22}}
				\rput[bl](8.5561905,29.31){\scriptsize{63}}
				\rput[bl](9.21619,29.55){\scriptsize{23}}
				\rput[bl](9.936191,29.57){\scriptsize{64}}
				\rput[bl](10.49619,29.51){\scriptsize{24}}
				\rput[bl](10.83619,29.29){\scriptsize{65}}
				\rput[bl](10.816191,28.89){\scriptsize{25}}
				\rput[bl](10.53619,28.51){\scriptsize{66}}
				\rput[bl](10.076191,28.17){\scriptsize{26}}
				\rput[bl](9.5561905,28.01){\scriptsize{67}}
				\rput[bl](8.896191,27.97){\scriptsize{27}}
				\rput[bl](8.49619,28.05){\scriptsize{68}}
				\rput[bl](8.156191,28.25){\scriptsize{28}}
				\rput[bl](8.356191,26.87){\scriptsize{14}}
				\rput[bl](7.9161906,26.81){\scriptsize{52}}
				\rput[bl](7.5561905,26.41){\scriptsize{15}}
				\rput[bl](7.4761906,25.85){\scriptsize{51}}
				\rput[bl](6.7361903,25.41){\scriptsize{19}}
				\rput[bl](6.2161903,25.53){\scriptsize{58}}
				\rput[bl](5.4161906,25.71){\scriptsize{20}}
				\rput[bl](4.8161907,25.45){\scriptsize{57}}
				\rput[bl](4.0361905,25.85){\scriptsize{37}}
				\rput[bl](4.0361905,26.37){\scriptsize{82}}
				\rput[bl](3.7161906,26.89){\scriptsize{38}}
				\rput[bl](3.6561904,27.39){\scriptsize{81}}
				\rput[bl](3.5361905,28.37){\scriptsize{30}}
				\rput[bl](3.3161905,28.07){\scriptsize{71}}
				\rput[bl](2.8361905,28.05){\scriptsize{31}}
				\rput[bl](2.2761905,28.09){\scriptsize{72}}
				\rput[bl](1.6961905,28.27){\scriptsize{32}}
				\rput[bl](1.2561905,28.53){\scriptsize{73}}
				\rput[bl](1.0361905,28.91){\scriptsize{33}}
				\rput[bl](1.0961905,29.37){\scriptsize{74}}
				\rput[bl](1.5561905,29.57){\scriptsize{34}}
				\rput[bl](2.1161904,29.59){\scriptsize{75}}
				\rput[bl](2.8361905,29.55){\scriptsize{35}}
				\rput[bl](3.2761905,29.35){\scriptsize{76}}
				\rput[bl](3.6161904,29.09){\scriptsize{29}}
				\rput[bl](3.7361906,28.75){\scriptsize{70}}
				\rput[bl](4.3961906,29.79){\scriptsize{8}}
				\rput[bl](3.9961905,30.13){\scriptsize{50}}
				\rput[bl](3.7161906,30.55){\scriptsize{16}}
				\rput[bl](3.4161904,31.01){\scriptsize{51}}
				\rput[bl](3.2961905,31.57){\scriptsize{15}}
				\rput[bl](3.3961904,32.05){\scriptsize{52}}
				\rput[bl](3.7561905,32.27){\scriptsize{14}}
				\rput[bl](4.2361903,32.15){\scriptsize{53}}
				\rput[bl](4.6161904,31.87){\scriptsize{13}}
				\rput[bl](4.9761906,31.41){\scriptsize{54}}
				\rput[bl](5.2561903,30.87){\scriptsize{12}}
				\rput[bl](5.3161907,30.31){\scriptsize{55}}
				\rput[bl](5.8161907,32.23){\scriptsize{36}}
				\rput[bl](5.3361907,32.55){\scriptsize{77}}
				\rput[bl](5.1161904,32.83){\scriptsize{42}}
				\rput[bl](4.5161905,33.37){\scriptsize{78}}
				\rput[bl](4.7561903,34.07){\scriptsize{41}}
				\rput[bl](5.1761904,34.49){\scriptsize{79}}
				\rput[bl](5.8561907,34.81){\scriptsize{40}}
				\rput[bl](6.6561904,34.75){\scriptsize{80}}
				\rput[bl](7.1761904,34.39){\scriptsize{39}}
				\rput[bl](7.5961905,33.99){\scriptsize{81}}
				\rput[bl](7.7561903,33.23){\scriptsize{38}}
				\rput[bl](7.0161905,32.73){\scriptsize{82}}
				\rput[bl](6.7361903,32.41){\scriptsize{37}}
				\rput[bl](6.3761907,32.23){\scriptsize{83}}
				\psline[linecolor=black, linewidth=0.02](9.0561905,27.83)(8.99619,27.23)(8.396191,27.27)
				\psline[linecolor=black, linewidth=0.02](9.67619,27.93)(9.87619,27.37)(10.476191,27.59)(10.276191,28.09)
				\psline[linecolor=black, linewidth=0.02](10.896191,28.51)(11.29619,28.15)(11.656191,28.61)(11.25619,28.95)
				\psline[linecolor=black, linewidth=0.02](10.736191,29.81)(10.99619,30.27)(11.476191,29.97)(11.21619,29.51)
				\psline[linecolor=black, linewidth=0.02](9.25619,29.87)(9.25619,30.31)(9.99619,30.37)(10.03619,29.89)
				\psline[linecolor=black, linewidth=0.02](9.99619,30.37)(10.276191,30.73)(10.856191,30.63)(10.99619,30.27)
				\psline[linecolor=black, linewidth=0.02](11.476191,29.97)(11.91619,29.63)(11.95619,28.99)(11.656191,28.61)
				\psline[linecolor=black, linewidth=0.02](8.978095,27.250834)(9.301905,26.946072)(9.720952,27.003214)(9.892381,27.384167)
				\psline[linecolor=black, linewidth=0.02](2.8638096,27.898453)(2.9019048,27.327024)(3.5495238,27.36512)
				\psline[linecolor=black, linewidth=0.02](2.2161906,27.955595)(2.1019049,27.441309)(2.292381,27.098452)(2.7114286,27.060358)(2.9019048,27.327024)
				\psline[linecolor=black, linewidth=0.02](1.6257143,28.16512)(1.5304762,27.688929)(1.1114286,27.479404)(0.7304762,27.74607)(0.7495238,28.222261)(1.1685715,28.488928)
				\psline[linecolor=black, linewidth=0.02](0.8066667,29.003214)(0.3495238,28.812738)(0.76857144,28.203215)
				\psline[linecolor=black, linewidth=0.02](3.1114285,29.860357)(3.035238,30.222261)(3.2638094,30.469881)(3.6257143,30.412739)
				\psline[linecolor=black, linewidth=0.02](2.4257143,29.955595)(2.4066668,30.298452)(3.0542858,30.203215)
				\psline[linecolor=black, linewidth=0.02](1.6257143,29.879404)(1.5304762,30.222261)(1.7209524,30.584167)(2.2161906,30.603214)(2.4257143,30.279406)
				\rput[bl](2.9923809,30.50988){\scriptsize{59}}
				\rput[bl](3.1457143,29.998453){\scriptsize{18}}
				\rput[bl](2.444762,30.374643){\scriptsize{}60}
				\rput[bl](2.0828571,30.736547){\scriptsize{17}}
				\rput[bl](1.5114286,30.64131){\scriptsize{63}}
				\rput[bl](1.1838095,30.14607){\scriptsize{23}}
				\rput[bl](9.073334,30.355595){\scriptsize{34}}
				\rput[bl](9.7,30.511786){\scriptsize{74}}
				\rput[bl](10.179048,30.809881){\scriptsize{33}}
				\rput[bl](10.86381,30.656548){\scriptsize{79}}
				\rput[bl](11.054286,30.279406){\scriptsize{40}}
				\rput[bl](11.587619,30.012737){\scriptsize{80}}
				\rput[bl](11.96381,29.55369){\scriptsize{39}}
				\rput[bl](12.025714,28.88893){\scriptsize{31}}
				\rput[bl](11.724762,28.391787){\scriptsize{72}}
				\rput[bl](11.165714,27.856548){\scriptsize{32}}
				\rput[bl](10.482857,27.307976){\scriptsize{78}}
				\rput[bl](9.930476,27.174643){\scriptsize{42}}
				\rput[bl](9.720952,26.736547){\scriptsize{77}}
				\rput[bl](9.244761,26.618452){\scriptsize{13}}
				\rput[bl](8.785714,26.844166){\scriptsize{53}}
				\rput[bl](2.5219047,27.316547){\scriptsize{39}}
				\rput[bl](2.6152382,26.809881){\scriptsize{80}}
				\rput[bl](2.117143,26.812738){\scriptsize{65}}
				\rput[bl](2.2161906,27.355595){\scriptsize{25}}
				\rput[bl](1.5304762,27.498453){\scriptsize{66}}
				\rput[bl](1.0152382,27.23369){\scriptsize{26}}
				\rput[bl](0.48285717,27.460358){\scriptsize{78}}
				\rput[bl](0.36476192,28.052738){\scriptsize{41}}
				\rput[bl](0.0,28.704166){\scriptsize{79}}
				
		\rput[bl](3.0,25.5){\large{$ \mathcal{K}_{6} $}}
				\end{pspicture}
			}

		\end{center}
	\end{figure}

\end{example}

\bigskip

A list of semi-equivelar maps on the surface of Euler genus $ 3 $ given in tabular form\,:

\begin{center}
\begin{tabular}{ |p{1cm}|p{2cm}|p{3cm}|p{2cm}|  }
 \hline
S. No. & Type & Maps & Ref.\\
 \hline\hline
 1 & $[4^3, 5^1] $ &  $ \mathcal{K}_{1} $,  $ \mathcal{K}_{2} $,  $ \mathcal{K}_{3} $ &\\
 \hline
 2 & $[3^1, 4^1, 7^1, 4^1] $ &  $ \mathcal{K}_{4} $ &\\
 \hline
 3 & $[6^2, 7^1] $ &  $ \mathcal{K}_{5} $ &\\
 \hline
 4 & $[4^1, 6^1, 14^1] $ &  $ \mathcal{K}_{6} $ &\\
 \hline
 5 & $[4^1, 8^1, 10^1] $ &  $ \mathcal{K}_{7} $ &\\
 \hline
6 & $[4^1, 6^1, 16^1] $ &    $ \mathcal{K}_{8} $,  $ \mathcal{K}_{9} $ & (see \cite{tu2018})\\
 \hline
7 &  $[3^1, 4^1, 8^1, 4^1] $    &  $ \mathcal{K}_{10} $,  $ \mathcal{K}_{11} $ & (see \cite{tu2018})\\
\hline
8 & $[6^2, 8^1] $ &  $ \mathcal{K}_{12} $,  $ \mathcal{K}_{13} $  &  (see \cite{tu2018}) \\
\hline
9 & $[3^1, 4^1, 3^1, 4^2] $ &  $ \mathcal{K}_{14} $ & (see \cite{bu2019}) \\
 \hline
10 &  $[3^5, 4^1] $ &    $ \mathcal{K}_{15} $,  $ \mathcal{K}_{16} $,  $ \mathcal{K}_{17} $ & (see \cite{utm2014})\\
\hline

\end{tabular}
\begin{center}
{\small Table 1}
\end{center}
\end{center}

\bigskip
This leads to the following \,:

\begin{theorem} \label{theo:sf3}
	There are at least 17 non-isomorphic semi-equivelar maps on the surface of Euler genus $ 3 $, listed in Table 1. None of these are vertex transitive.
\end{theorem}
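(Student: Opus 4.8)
The plan is to assemble the statement from the constructions of Section~2 together with Theorems~\ref{theo:sf1} and~\ref{theo:sf2} and the cited literature, in three steps.

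\emph{Step 1: the seventeen maps exist on the right surface.} The maps $\mathcal{K}_1,\mathcal{K}_2,\mathcal{K}_3$ (type $[4^3,5^1]$), $\mathcal{K}_4$ (type $[3^1,4^1,7^1,4^1]$), $\mathcal{K}_5$ (type $[6^2,7^1]$), $\mathcal{K}_6$ (type $[4^1,6^1,14^1]$) and $\mathcal{K}_7$ (type $[4^1,8^1,10^1]$) are given by the polygonal schemes in Example~\ref{eg:8maps-torus}; for each I would read off the face list from the picture, check that the boundary identifications close up to a surface, verify that the link of every vertex is a single cycle of the prescribed form (so that the map is semi-equivelar of the stated type), and confirm that the surface has Euler characteristic $-1$. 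The last point is a pure count: for a map of type $[p_1^{n_1},\dots,p_k^{n_k}]$ with $V$ vertices one has $\deg v = d := \sum_i n_i$ for every $v$, hence $2E = dV$ and $F = V\sum_i n_i/p_i$, so $\chi = V\bigl(1-\tfrac{d}{2}+\sum_i n_i/p_i\bigr)$; since the bracket is negative for each of these types, the equation $\chi=-1$ pins down $V$ (for instance $V=20$ when the type is $[4^3,5^1]$), and a tally of the faces in each figure confirms the value. The remaining maps $\mathcal{K}_8,\dots,\mathcal{K}_{17}$ are the semi-equivelar maps on the surface of Euler characteristic $-1$ constructed in \cite{tu2018} (types $[4^1,6^1,16^1]$, $[3^1,4^1,8^1,4^1]$, $[6^2,8^1]$), \cite{bu2019} (type $[3^1,4^1,3^1,4^2]$) and \cite{utm2014} (type $[3^5,4^1]$).

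\emph{Step 2: they are pairwise non-isomorphic.} An isomorphism of maps carries the face-cycle at a vertex to the face-cycle at its image, so maps of two distinct types in Table~1 are never isomorphic; this separates the ten type classes. Within a class, $\mathcal{K}_1,\mathcal{K}_2,\mathcal{K}_3$ are pairwise non-isomorphic by Theorem~\ref{theo:sf1}, and the non-isomorphism of the pairs and the triple inside the types $[4^1,6^1,16^1]$, $[3^1,4^1,8^1,4^1]$, $[6^2,8^1]$ and $[3^5,4^1]$ is recorded in \cite{tu2018} and \cite{utm2014}; the four remaining types contribute a single map each and need no comparison. Hence $\mathcal{K}_1,\dots,\mathcal{K}_{17}$ are seventeen pairwise non-isomorphic semi-equivelar maps on the surface of Euler genus $3$, which is the ``at least $17$'' claim; it is only a lower bound because the enumeration by type is not asserted complete for $[6^2,7^1]$, $[3^1,4^1,7^1,4^1]$, $[4^1,6^1,14^1]$ and $[4^1,8^1,10^1]$, where only one representative each is exhibited.

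\emph{Step 3: none is vertex-transitive.} This is exactly the content of Theorem~\ref{theo:sf2}, so nothing further is needed. I expect the real work to sit in Step~1 for $\mathcal{K}_4,\dots,\mathcal{K}_7$: the figures encode large gluings (up to $84$ distinct vertex labels appear in the scheme for $\mathcal{K}_6$), and one must trace every boundary identification carefully to be certain that the link of each vertex is a single cycle of exactly the right face-type and that the identified polygon yields a closed surface with $\chi=-1$ rather than some other genus or a non-surface. A systematic way to do this is to list the faces of each picture, reconstruct $V$, $E$, $F$ from that list, compare with the values forced by the type via the formula above, and then check the links vertex by vertex; everything else is bookkeeping on top of results already in hand.
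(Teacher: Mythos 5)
Your proposal is correct and follows essentially the same route as the paper, whose own proof is a one-line citation of Theorem \ref{theo:sf2} together with \cite{bu2019,utm2014,tu2018}; you simply make explicit the pieces the paper leaves implicit (existence from Example \ref{eg:8maps-torus} and the references, separation of type classes by the face-cycle invariant, and within-class non-isomorphism from Theorem \ref{theo:sf1} and the cited classifications).
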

{The proof of this theorem is follows by Theorem \ref{theo:sf2} and \cite{bu2019,utm2014,tu2018}.}

\section{Proofs} \label{sec:proofs-1}

Let  $F_1\mbox{-}\cdots\mbox{-}F_m\mbox{-}F_1 $ be the face-cycle of a vertex  $u $ in a map. Then  $F_i \cap F_j $ is either  $u $ or an edge through  $u $. Thus the face  $F_i $ must be of the form  $u_{i+1}\mbox{-}u\mbox{-}u_i\mbox{-}P_i\mbox{-}u_{i+1} $, where  $P_i = \emptyset $ or a path  $\& $  $P_i \cap P_j = \emptyset $ for  $i \neq j $. Here addition in the suffix is modulo  $m $. So,  $u_{1}\mbox{-}P_1\mbox{-}u_2\mbox{-}\cdots\mbox{-}u_m\mbox{-}P_m\mbox{-}u_1 $ is a cycle and said to be the {\em link-cycle} of  $u $. For a simplicial complex,  $P_i = \emptyset $ for all  $i $, and the link-cycle of a vertex is the link of that vertex.

A face in a map of the form  $u_1\mbox{-}u_2\mbox{-}\cdots\mbox{-}u_n\mbox{-}u_1 $ is also denoted by  $u_1u_2\cdots u_n $. The faces with 3, 4, \dots, 10 vertices are called {\em triangle}, {\em square}, \dots, {\em decagon} respectively.

We need the following technical proposition from \cite{dm2017} to prove Lemma  \ref{lem1} which follows it.

\begin{proposition} [Datta  $\& $ Maity] \label{prop}
If  $[p_1^{n_1}, \dots, p_k^{n_k}] $ satisfies any of the following three properties then  $[p_1^{n_1} $,  $\dots, p_k^{n_k}] $ can not be the type of any semi-equivelar map on a surface.
\begin{enumerate}[label=\roman*]
\item There exists  $i $ such that  $n_i=2 $,  $p_i $ is odd and  $p_j\neq p_i $ for all  $j\neq i $.

\item There exists  $i $ such that  $n_i=1 $,  $p_i $ is odd,  $p_j\neq p_i $ for all  $j\neq i $ and  $p_{i-1}\neq p_{i+1} $.

\item   $[p_1^{n_1} $,  $\dots, p_k^{n_k}] $ is of the form  $[p^1,q^m,p^1,r^n] $, where  $p $,  $q $,  $r $ are distinct and  $p $ is odd.
\end{enumerate}
(Here, addition in the subscripts are modulo  $k $.)
\end{proposition}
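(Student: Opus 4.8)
The plan is to argue by a local double-counting / parity obstruction around the "odd" gon $p_i$, exploiting the rigidity forced by the semi-equivelar condition: since every vertex of the hypothetical map $X$ has face-cycle of type $[p_1^{n_1},\dots,p_k^{n_k}]$, the faces of a fixed size $p$ that meet a given $p$-gon along edges are completely prescribed, and one can trace how such $p$-gons propagate through $X$. For each of the three cases I would isolate the sub-configuration that causes the contradiction and then invoke connectedness of the underlying surface (hence of the map) to derive a global impossibility from a purely local parity clash.

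For case (i), $n_i=2$ with $p_i$ odd and $p_i$ not equal to any other $p_j$: around each vertex the two $p_i$-gons occur consecutively, so the union of all $p_i$-gons forms a sub-complex in which every vertex has exactly two incident $p_i$-faces sharing an edge at that vertex. First I would show this forces the $p_i$-faces to organize into "strips" (each $p_i$-gon glued to exactly two others along a pair of its edges), so a connected component of the union of $p_i$-gons is an annulus or Möbius band whose boundary walk alternates in a forced way; then I would count, along such a strip, the edges of each $p_i$-gon that are "internal" (shared with another $p_i$-gon) versus "boundary", and derive that $p_i$ must satisfy a parity relation that fails because $p_i$ is odd. The cleanest version: the internal edges of each $p_i$-gon come in the two-at-a-vertex pattern at every vertex, which partitions the $p_i$ edges into matched pairs around the polygon, forcing $p_i$ even — contradiction.

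For case (ii), $n_i=1$ with $p_i$ odd, $p_i\ne p_j$ for $j\ne i$, and $p_{i-1}\ne p_{i+1}$: here each vertex lies on a unique $p_i$-gon, flanked by a $p_{i-1}$-gon on one side and a $p_{i+1}$-gon on the other. I would look at a single $p_i$-gon $F=v_1v_2\cdots v_{p_i}$ and track, edge by edge around $F$, which neighbouring face is the $p_{i-1}$-gon and which is the $p_{i+1}$-gon: the semi-equivelar pattern at each $v_j$ forces the labels "$(i-1)$-side" and "$(i+1)$-side" to alternate as we walk around $F$ (because at $v_j$ the two edges of $F$ through $v_j$ have the $p_{i-1}$-gon on one and the $p_{i+1}$-gon on the other, and consistency at the next vertex flips the roles). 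Since $p_{i-1}\ne p_{i+1}$ these two labels are genuinely distinct, so an alternating 2-colouring of the $p_i$ edges of a cycle of odd length $p_i$ is impossible — contradiction. Case (iii), the form $[p^1,q^m,p^1,r^n]$ with $p,q,r$ distinct and $p$ odd, is essentially the same argument applied to a $p$-gon: the two copies of $p^1$ in the face-cycle at each vertex sit in "opposite" positions, and walking around a fixed $p$-gon forces the $q$-block and $r$-block to alternate on its edges, again impossible since $p$ is odd and $q\ne r$.

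The main obstacle I anticipate is making the "alternation around a fixed $p_i$-gon" rigorous: one must check that the two edges of $F$ incident to a common vertex $v_j$ really do receive opposite types, which requires carefully reading off the cyclic face-sequence at $v_j$ and using that $p_i$ appears with multiplicity one (cases ii, iii) or with the two copies adjacent (case i) and that the relevant neighbouring sizes are distinct — this is where the hypotheses $p_j\ne p_i$, $p_{i-1}\ne p_{i+1}$, and $q\ne r$ are used. The remaining bookkeeping (strips forming annuli/Möbius bands in case i, the parity count) is routine once the local alternation is established, and no global surface topology beyond connectedness is needed.
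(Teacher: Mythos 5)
The paper does not prove this proposition; it is imported verbatim from Datta--Maity \cite{dm2017}, so there is no in-paper argument to compare against. Your parity/alternation argument is correct and is essentially the proof in that reference: in each case the hypotheses force, at every vertex of a fixed $p_i$-gon $F$, the two edges of $F$ through that vertex to carry distinct labels (internal vs.\ boundary in (i), ``$p_{i-1}$-side'' vs.\ ``$p_{i+1}$-side'' in (ii), $q$-side vs.\ $r$-side in (iii), the hypotheses $p_j\neq p_i$, $p_{i-1}\neq p_{i+1}$, $q\neq r$ being exactly what makes the labels well defined and distinct), and an alternating edge-labelling of a cycle of odd length $p_i$ is impossible. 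The excursion through strips/annuli/M\"obius bands in case (i) is unnecessary --- your ``cleanest version'' (exactly one internal edge at each vertex of $F$, hence alternation around the odd boundary cycle) already closes that case.
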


\begin{lemma} \label{lem1}
Let  $X $ be an  $n $-vertex  map on the surface of Euler char.  $\chi = -1 $ of type   $[p_1^{n_1}, \dots,  $  $p_{\ell}^{n_{\ell}}] $.  Then  $(n, [p_1^{n_1}, \dots, p_{\ell}^{n_{\ell}}])  $  $= (12, [3^5, 4^1]), (42, [3^4, 7^1]), $  $ (20, [3^2, $  $4^1, 3^1, 5^1]), (12, $  $[3^1, 4^1, 3^1, 4^2]), $  $ (24, [3^4, 8^1]), $  $  (42, [3^1, 4^1, 7^1, 4^1]), $  $(24, [3^1, 4^1, 8^1, 4^1]), (15, $  $[3^1, 5^3]) $,  $(12, [3^1, 6^1, $  $4^1, 6^1]) $,  $(21,[3^1, 7^1, 3^1, 7^1]), $   $(84, [4^1, $  $ 6^1, 14^1]), $  $(48, [4^1, 6^1, 16^1]), (40,[4^1, $  $8^1, 10^1]), (24, [6^2,  8^1]), $  $(42, [3^1, 14^2]), (42, [7^1, 6^2]), (20, $  $[4^3, $  $5^1]) $.
\end{lemma}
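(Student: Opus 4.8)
The plan is to combine the standard Euler-characteristic counting identity for semi-equivelar maps with the classification of admissible types coming from the vertex-link structure, and then to run an elementary number-theoretic sieve. Fix a type $[p_1^{n_1}, \dots, p_\ell^{n_\ell}]$ with $d := n_1 + \cdots + n_\ell$ the degree of every vertex. If $X$ has $n$ vertices, then the edge count is $E = nd/2$, and the face count decomposes as $F = \sum_{i=1}^{\ell} F_{p_i}$ where $F_{q}$ is the number of $q$-gonal faces. Double counting incidences between vertices and $q$-gonal faces (grouped by the positions in the face-cycle) forces $F_{q} = n \cdot (\text{number of times a } q\text{-gon appears in the face-cycle}) / q$; concretely, if the $n_i$ are grouped so that $m_q$ is the total number of faces of size $q$ around a vertex, then $q F_q = n \, m_q$. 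Plugging into $\chi = V - E + F = -1$ gives a single linear equation in $n$:
\[
n\left(1 - \frac{d}{2} + \sum_{q} \frac{m_q}{q}\right) = -1,
\]
so $n$ is completely determined by the type, namely $n = \left(\frac{d}{2} - 1 - \sum_q \frac{m_q}{q}\right)^{-1}$, and in particular the bracketed quantity must be the reciprocal of a positive integer.

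With this identity in hand, the argument reduces to: (1) enumerate all candidate types $[p_1^{n_1}, \dots, p_\ell^{n_\ell}]$ that are \emph{locally realizable}, i.e. for which a vertex-link of that cyclic type exists and which are not excluded by Proposition \ref{prop}; (2) for each, compute the quantity $\kappa := \frac{d}{2} - 1 - \sum_q \frac{m_q}{q}$ and discard the type unless $\kappa > 0$ and $1/\kappa \in \ZZ_{>0}$; (3) record the surviving pairs $(n, \text{type})$. The local realizability in step (1) is the part that needs care: one uses the fact that for a polyhedral map the face-cycle must close up consistently and that $\sum_q \frac{m_q}{q}$ combined with $d$ constrains the average face size, which for $\chi = -1$ (just below the torus case $\chi = 0$, where $\kappa = 0$) forces the types to be ``almost flat'' — each $p_i$ is small, and there are only finitely many combinatorial possibilities for the cyclic arrangement $[p_1^{n_1}, \dots, p_\ell^{n_\ell}]$ with $d \le 6$ or so. One enumerates these (this is essentially the list of Archimedean-like vertex types), applies Proposition \ref{prop} to kill the ones with an isolated odd face, and is left with a short finite list.

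The main obstacle I expect is controlling the enumeration in step (1) rigorously rather than by inspection: one must argue that only finitely many cyclic types can possibly have $\kappa$ equal to the reciprocal of a positive integer, and that the bound on $d$ and on each $p_i$ is genuinely forced. The clean way to do this is to note $\kappa > 0$ is equivalent to $\sum_q \frac{m_q}{q} < \frac d2 - 1$, while trivially $\sum_q \frac{m_q}{q} \ge \frac{d}{\max_i p_i} \cdot (\text{stuff})$ and $\sum_q m_q = d$, so that $\sum_q \frac{m_q}{q} \ge \frac{\sum_q m_q}{\max p_i}$; combined with the requirement that $\kappa$ be \emph{at least} $1/n$ for the relevant small $n$, this pins down $d \le 6$ and each $p_i$ to a bounded range, leaving a finite case check. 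Once the finite list of admissible types is fixed, computing $n = 1/\kappa$ for each and checking integrality is a routine table, and the seventeen pairs in the statement are exactly those that survive; I would present that table and note that the remaining candidate types either give a non-integral $n$ or are eliminated by Proposition \ref{prop}.
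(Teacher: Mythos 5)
Your overall strategy coincides with the paper's: use the incidence counts $2f_1 = nd$ and $q\,x_q = n m_q$ together with $\chi=-1$ to express $n$ as a function of the type, bound the degree $d$, and then run a finite sieve using integrality and Proposition \ref{prop}. However, two concrete steps fail or fall short as written. First, your inequality for pinning down $d\le 6$ points the wrong way: from $\sum_q m_q/q \ge d/\max_i p_i$ you can only bound $d$ from below. What you need is $\sum_q m_q/q \le d/3$ (every face has at least three sides), which gives $1/n=\kappa \ge d/6-1$; combined with $n\ge 7$ this yields $d\le 6+6/7$, hence $d\le 6$. This is exactly the paper's opening computation $(d-6)f_0\le -6\chi$ with $f_0\ge 7$.

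Second, and more seriously, the sieve ``$1/\kappa$ a positive integer, plus Proposition \ref{prop}'' does not cut the candidate list down to the seventeen stated pairs. At least two further types pass both tests: $[3^1,8^1,3^1,8^1]$ with $n=12$ (here $x_3=8$, $x_8=3$, and none of (i)--(iii) of Proposition \ref{prop} applies, since the two octagons have the same size) and $[3^2,5^1,3^1,5^1]$ with $n=10$. The paper eliminates these by additional combinatorial arguments: in a polyhedral map the link of a vertex is a genuine cycle, so the closed star has $1+d+\sum_i(p_i-3)$ \emph{distinct} vertices, which gives $15>12$ in the first case; in the second case the star would exhaust all $10$ vertices, which is incompatible with the edge graph not being complete. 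The paper also invokes lower bounds such as $x_q\ge 3$ at several points of the enumeration. Your ``local realizability'' clause could in principle absorb these checks, but as formulated it does not identify them, and without them your final table would contain more than seventeen rows; so the assertion that ``the seventeen pairs in the statement are exactly those that survive'' is not yet justified by the sieve you describe.
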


\begin{proof}
Let  $f_1, f_2 $ be the no. of edges and faces of  $X $ respectively. Let  $f_0 = n $. Let  $d $ be the degree of each vertex. Consider the  $k $-tuple  $(q_1^{m_1}, \dots, q_k^{m_k}) $, where  $3 \le q_1 < \dots <q_k $, for each  $i=1, \dots,k $,  $q_i = p_j $ for some  $j $,  $m_i = \sum_{p_j = q_i}n_j $. Let  $x_i $ be the no. of  $i $-gons in  $X $. So,  $\sum_i m_i = \sum_{j}n_j = d $ and  $3f_2 \le 2f_1 = f_0d $. Clearly, (the no. of  $q_i $-gons)  $\times $  $q_i $ =  $n \times m_i $ and hence  $f_2 = n \times (\frac{m_1}{q_1} + \dots + \frac{m_k}{q_k}) $. Since  $\chi = -1 $, we get  $-1 -f_0 = f_2 - f_1 \le (f_0 \times d) /3-f_1 = -(f_0 \times d)/6 $. Thus,  $(d-6)f_0 \le -6\chi $ and hence  $d \le 6 $ since  $f_0 \geq 7 $. So,  $d = 3, 4, 5 $ or  $6 $.

\begin{Case1} \textnormal{Assume  $d = 6 $. Then,  $f_1 = 3 \times f_0, \sum n_i = 6, 6f_0 = 2f_1 = \sum(i \times x_i) $. Therefore,  $f_2 = \sum x_i \le 2f_0 $ and  $f_2 = -1 -f_0 +f_1 = -1 -f_0 +3f_0 = 2f_0 -1 $. So,  $\sum x_i = 2f_0-1 $ and  $3\sum x_i+3 = 6f_0 = \sum ix_i $, i.e.,  $3 = x_4+2x_5+3x_6+\dots $. Hence,  $x_i = 0 $ for  $i \geq 7 $. So,  $(x_4, x_5, x_6) = (3, 0, 0), (1, 1, 0), (0, 0, 1) $.
If  $(x_4, x_5, x_6) = (3, 0, 0) $ then  $(q_1^{m_1}, \dots, q_k^{m_k}) = (3^{\ell_1}, 4^{\ell_2}) $ and  $\ell_1 + \ell_2 = 6 $. So,  $-1 = n - 6n/2 + n\ell_1/3 + n\ell_2/4 = -2n + n(\ell_1/3 + \ell_2 / 4) $, i.e,  $12 = 6n - n\ell_1 $, i.e.,  $12 = n(6 - \ell_1) $. Thus,  $\ell_1 = 5 $ since  $n \geq 7 $. Hence,  $[p_1^{n_1}, \dots, p_{\ell}^{n_{\ell}}] = [3^5, 4^1] $  $\& $  $n = 12 $.
 $(x_4, x_5, x_6) = {(1, 1, 0)} $ implies  $(q_1^{m_1}, \dots, q_k^{m_k}) = (3^{4}, 4^{1}, 5^1) $. This gives,  $n(X) = 60/13 $, which is not an integer. So,  $(x_4, x_5, x_6) \neq (1, 1, 0) $. Clearly,  $(x_4, x_5, x_6) \neq (0, 0, 1) $ as  $x_i \geq 3 $. }
\end{Case1}

\begin{Case1} \textnormal{Assume  $d = 5 $. Then,  $f_1 = 5f_0/2 ~\&~ 6f_1 -10 = 10f_2 $, i.e.,  $3 \times\sum i \times x_i -10 = 10 \sum x_i $. Therefore,  $\sum_{i \ge 4}(3 \times i -10) x_i -10 = x_3 $.}

\textnormal{Let   $(q_1^{m_1}, \dots, q_k^{m_k}) = (3^4, i^1) $. Then,  $4f_0 = 3x_3, f_0 = ix_i $. Hence,  $x_3 = (3i-10)x_i -10 $, i.e.,  $30i = (5i-30)f_0 $. So,  $f_0 = 6i/(i-6) $. Hence,  $(i, f_0, x_i, x_3) = (7, 42, 7, 56), (8, 24, 3, 32) $ as  $x_i \geq 3 $. So,  $(n, [p_1^{n_1}, \dots, p_{\ell}^{n_{\ell}}]) = (42, [3^4, 7^1]), (24, [3^4, 8^1]) $.}

\textnormal{Let   $(q_1^{m_1}, \dots, q_k^{m_k}) = (3^3, i^2) $. Then,  $f_0=x_3, 2f_0 = ix_i $. So,  $x_3  = (3i-10) \times x_i -10 $, i.e.,  $f_0 = 2i/(i-4) $. Hence,  $(i, f_0, x_i, x_3) = (5, 10, 4, 10) $ as  $x_i \geq 3 $. Clearly,  $(i, f_0, x_i, x_3) \neq (5, 10, 4, 10) $ since face-cycle of a vertex in  $X $ contains  $10 ~(=f_0) $ vertices, which is not possible as edge graph of  $X $ is not complete.}

\textnormal{Let   $(q_1^{m_1}, \dots, q_k^{m_k}) = (3^3, i^1, j^1) $,  $i < j $. Then,  $f_0=x_3, f_0 = ix_i, f_0 = jx_j $. So,  $x_3  = (3i-10) \times x_i+(3j-10) \times x_j -10 $, i.e.,  $f_0 = 2ij/(ij-2j-2i), i<j $. Hence,  $(i, j, f_0, x_i, x_j, x_3) = (4, 5, 20, 5, 4, 20) $ as  $x_i, x_j \geq 3 $. Thus,  $(n, [p_1^{n_1}, \dots, p_{\ell}^{n_{\ell}}]) = (20, [3^2, 4^1, 3^1, 5^1]) $ by Prop. \ref{prop}. }

\textnormal{Let   $(q_1^{m_1}, \dots, q_k^{m_k}) = (3^2, i^1, j^1, k^1) $,  $i < j < k $. By Prop. \ref{prop},  $[p_1^{n_1}, \dots, p_{\ell}^{n_{\ell}}] = [3^1, i^1, 3^1, $  $j^1, i^1], [3^1, i^1, 3^1, i^2] $.
Let  $[p_1^{n_1}, \dots, p_{\ell}^{n_{\ell}}] = [3^1, i^1, 3^1, j^1, i^1] $. Then,
 $2f_0=3x_3, 2f_0 = ix_i, f_0 = jx_j $. So,  $x_3  = (3i-10) \times x_i+(3j-10) \times x_j -10 $, i.e.,  $f_0 = 6ij/(5ij-12j-6i), i<j $. Hence,  $(i, j, f_0, x_3, x_i, x_j) = (3, 7, 42, 28, 28, 6), (3, 8, 24, 16, 16, 3), (4, 4, 12, 8, 6, 3) $ as  $x_i, x_j \geq 3 $. Thus,  $(n, [p_1^{n_1}, \dots, p_{\ell}^{n_{\ell}}]) = (42, [3^4, 7^1]), (24, [3^4, 8^1]), (12, [3^1, 4^1, 3^1, 4^2]) $.
Let  $[p_1^{n_1}, \dots, p_{\ell}^{n_{\ell}}] $  $= [3^1, i^1, 3^1, i^2] $. Then,
 $2f_0=3x_3, 3f_0 = ix_i $. So,  $x_3  = (3i-10) \times x_i -10 $, i.e.,  $f_0 = 6i/(5i-18) $. Hence,  $(i, f_0, x_i, x_3) = (4, 12, 9, 8) $. Thus,  $(n, [p_1^{n_1}, \dots, p_{\ell}^{n_{\ell}}]) = (12, [3^1, 4^1, 3^1, 4^2]) $.}

\textnormal{Let   $(q_1^{m_1}, \dots, q_k^{m_k}) = (3^1, i^1, j^1, k^1, \ell^1) $,  $i < j < k < \ell $. By Prop. \ref{prop},  $[p_1^{n_1}, \dots, p_{\ell}^{n_{\ell}}] = [3^1, i^1, j^1, k^1, i^1] $.
So,  $f_0=3x_3, 2f_0 = ix_i, f_0 = jx_j, f_0 = kx_k $. Therefore,  $x_3  = (3i-10) \times x_i+(3j-10) \times x_j+(3k-10) \times x_k -10 $, i.e.,  $f_0 = 6ijk/(7ijk-12jk-6ik-6ij) $. Hence,  $(i, j, k, f_0, x_3, x_i, x_j, x_k) = (4, 4, 3, 12, 4, 6, 3, 4), $  $(4, 3, 4, 12, 4, 6, 4, 3), $  $(3, 8, 3, 24, 8, 16, 3, 8), $  $(3, 7, 3, 42, 14, 28, 6, 14), (3, 3, 8, 24, 8, 16, 8, 3), (3, 3, 7, 42, 14, 28, 14, 6) $ as  $x_i, x_j, x_k \geq 3 $. Thus,  $(n, [p_1^{n_1}, \dots, p_{\ell}^{n_{\ell}}]) = (12, [3^1, 4^2, 3^1, 4^1]), (24, [3^4, 8^1]), (42, [3^4, 7^1]) $.}
\end{Case1}

\begin{Case1} \textnormal{Assume  $d = 4 $. Then,  $f_1 = 2f_0 ~\&~ f_1 -2f_2 = 2 $. Thus,  $\sum_{i \ge 5}(i -4) x_i = x_3 +4 $. }

\textnormal{If   $(q_1^{m_1}, \dots, q_k^{m_k}) = (3^3, i^1) $. Then,  $3f_0 = 3x_3, f_0 = ix_i $. Hence,  $4+x_3 = (i-4)x_i $, i.e.,  $f_0 = -i $. So,  $(q_1^{m_1}, \dots, q_k^{m_k}) \neq (3^3, i^1) $.}

\textnormal{Let   $(q_1^{m_1}, \dots, q_k^{m_k}) = (3^1, i^1, j^1, k^1) $. By Prop. \ref{prop},  $[p_1^{n_1}, \dots, p_{\ell}^{n_{\ell}}] = [3^1, i^1, j^1, i^1] $.
Let  $[p_1^{n_1}, \dots, p_{\ell}^{n_{\ell}}] = [3^1, i^1, j^1, i^1] $. Then,
 $f_0=3x_3, 2f_0 = ix_i, f_0 = jx_j $. So,  $4 + x_3  = (i-4) \times x_i+(j-4) \times x_j $, i.e.,  $f_0 = 3ij/(2ij-3i-6j) $. Hence,  $(i, j, f_0, x_3, x_i, x_j) = (4, 7, 42, 14, 21, 6), (4, 8, $  $24, 8, 12, 3), (5, 4, 60, 20, 24, 15), (5, 5, 15, 5, 6, 3), (6, 4, 12, 4, 4, 3), (7, 3, 21, 7, 6, 7), (8, 3, 12, 4, 3, $  $4) $ as  $x_i, x_j \geq 3 $. Thus,  $(n, [p_1^{n_1}, \dots, p_{\ell}^{n_{\ell}}]) = (42, [3^1, 4^1, 7^1, 4^1]), (24, [3^1, 4^1, 8^1, 4^1]), (60, [3^1, 5^1, $  $4^1, 5^1]), (15, [3^1, 5^3]), (12, [3^1, 6^1, 4^1, 6^1]), (21, [3^1, 7^1, 3^1, 7^1]), (12, [3^1, 8^1, 3^1, 8^1]) $. Observe that  $[p_1^{n_1}, \dots, p_{\ell}^{n_{\ell}}] \neq [3^1, 8^1, 3^1, 8^1] $ since face-cycle of a vertex contains  $15 $ vertices and  $15 > 12 $,  $\& $  $[p_1^{n_1}, \dots, p_{\ell}^{n_{\ell}}] \neq (60, [3^1, 5^1, $  $4^1, 5^1]) $ by Prop. \ref{prop}.}

\textnormal{Let   $(q_1^{m_1}, \dots, q_k^{m_k}) = (i^1, j^1, k^1, t^1) $. Then,  $f_0 = ix_i, f_0 = jx_j, f_0 = kx_k, f_0 = tx_t $. Thus,  $f_1 - 2f_2 = 2 $, i.e.,  $4 = (i-4)m_i + (j-4)m_j + (k-4)m_k + (t-4)m_t $, i.e.,  $f_0 = ijkt/(ijkt - jkt - ikt - ijt - ijk) $. Hence,  $(i, j, k, t, f_0) = (4, 4, 4, 5, 20) $. Therefore,  $(n, [p_1^{n_1}, \dots, p_{\ell}^{n_{\ell}}]) = (20, [4^3, 5^1]) $.}
\end{Case1}

\begin{Case1} \textnormal{Assume  $d = 3 $. Then,  $2f_1 = 3f_0 ~\&~ f_1 -3f_2 = 3 $. Thus,  $(i-6)m_i+(j-6)m_j+(k-6)m_k = 6 $.
Let   $(q_1^{m_1}, \dots, q_k^{m_k}) = (i^1, j^1, k^1) $. By Prop. \ref{prop},  $[p_1^{n_1}, \dots, p_{\ell}^{n_{\ell}}] = [i^1, j^1, k^1] $. So,  $f_0 = im_i, f_0 = jm_j, f_0 = km_k $. Thus,  $f_0 = 2ijk/(ijk-2ij-2ik-2jk) $. If  $i, j, k $ are even then  $(i, j, k, f_0, m_i, m_j, m_k) = (4, 6, 14, 84, 21, 14, 6), (4, 6, 16, 48, 12, 8, 3), (4, 8, 10, 40, 10, 5, 4), $  $(6, $  $6, 8, 24, 4, 4, 3) $.
Thus,  $(n, [p_1^{n_1}, \dots, p_{\ell}^{n_{\ell}}]) = (84, [4^1, 6^1, 14^1]),$ $(48, [4^1, 6^1, 16^1]),$ $(40, [4^1, 8^1,$ $10^1]), $  $(24, [6^2, 8^1]) $. If  $i, j $ or  $k $ is odd then by Prop. \ref{prop},  $[p_1^{n_1}, \dots, p_{\ell}^{n_{\ell}}] = [i^1, j^2] $ if  $i $ is odd. So,  $f_0 = ix_i, 2f_0 = jx_j $. Hence,  $6 = (i-6)m_i + (j-6)m_j $, i.e.,  $f_0 = 2ij/(ij-2j-4i) $. Thus,  $(i, j, f_0) = (3, 14, 42),  (7, 6, 42), (8, 6, 24), (9, 6, 18)$ using Prop. \ref{prop}.  Therefore,  $(n, [p_1^{n_1}, \dots, p_{\ell}^{n_{\ell}}]) = (42, [3^1, 14^2]), (42, [7^1, 6^2]), (24, [8^1, 6^2])$.
This completes the proof. }
\end{Case1}\vspace{-7mm}
\end{proof}

We recall (from \cite{DM2018}) some combinatorial versions of truncation and rectification on maps.

\begin{definition}\label{dfn1}
{\rm Let  $W $ be a semi-equivelar map on surface  $S $. Let  $V(W) = \{u_1, \dots, u_n\} $. Consider a new set (of nodes)  $V := \{v_{ij} \colon u_iu_j $ is an edge of  $W\} $. So, if  $v_{ij} \in V $ then  $i \neq j $  $\& $  $v_{ji} $ is also in  $V $.  Let  $E := \{v_{ij}v_{ji} \colon v_{ij} \in V\} \sqcup \{v_{ij}v_{ik} \colon u_j, u_k $  are in a face containing  $u_i, 1\le i \le n\} $. Then  $(V, E) $ is a graph on  $S $.  Thus  $(V, E) $ gives a map  $T(W) $ on  $S $. This map  $T(W) $ is said to be the} truncation {\rm of  $W $}.
\end{definition}

\begin{definition}\label{dfn2}
{\rm Let  $W $ be a semi-equivelar map on surface  $S $. Let  $V(W) = \{u_1, \dots, u_n\} $. Consider the graph  $(V, E) $, where  $V $ is the edge set  $E(W) $ of  $W $ and  $E := \{ef \colon e, f $ are two adjacent edges in a face of  $W\} $. Then  $(V, E) $ is a graph on  $S $.  Thus  $(V, E) $ gives a map say  $R(X) $ on  $S $, which is said to be the} rectification {\rm of  $W $}.
\end{definition}

\begin{proposition}\label{lem4}
Let  $X $  $\& $  $T(X) $ be as in Definition \ref{dfn1}. Then, if  $X $ is semi-equivelar of type  $[q^p] $  $( $resp.,  $[p^1, q^1, p^1, q^1]) $, then  $T(X) $ is also semi-equivelar  $\& $ of type  $[p^1, (2q)^2] $  $( $resp.,  $[4^1, (2p)^1, (2q)^1]) $.
\end{proposition}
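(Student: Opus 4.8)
The plan is to determine the local structure of $T(X)$ directly from Definition~\ref{dfn1} and then read off its type from the semi-equivelarity of $X$. The key point is that $T(X)$ is $3$-regular and that the three faces of $T(X)$ meeting at a node $v_{ij}$ are forced: one ``vertex polygon'' attached to the original vertex $u_i$, and two ``truncated polygons'' coming from the two faces of $X$ incident to the edge $u_iu_j$.

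First I would set up the global combinatorics of $T(X)$. From Definition~\ref{dfn1}, the neighbours of $v_{ij}$ are $v_{ji}$ together with $v_{ik}$ and $v_{ik'}$, where $u_k$ and $u_{k'}$ are the two neighbours of $u_i$ sharing a face with $u_j$ (equivalently, the neighbours of $u_i$ other than $u_j$ in the two faces $F, F'$ of $X$ through $u_iu_j$; here we use that $X$ is polyhedral, so $u_iu_j$ is an edge of exactly two faces and in each of them $u_i$ has exactly two neighbours). Hence $T(X)$ is $3$-regular. Next I would exhibit its two families of faces: for each original vertex $u_i$ of degree $d_i$ a $d_i$-gon $C_{u_i}$ on the nodes $\{v_{ij}\colon u_iu_j\in E(X)\}$ arranged in the rotation order around $u_i$; and for each original $m$-gon $F$ of $X$ a $2m$-gon $T_F$, with one edge $v_{ij}v_{ji}$ per edge of $F$ and one ``corner'' edge $v_{ik}v_{ik'}$ per vertex of $F$. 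A short incidence count confirms that these are all the faces of $T(X)$ and that each edge lies on exactly two of them: an edge $v_{ij}v_{ji}$ on $T_F$ and $T_{F'}$, and a corner edge $v_{ik}v_{ik'}$ on $C_{u_i}$ and on exactly one truncated polygon.

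From this I would read off the face-cycle at an arbitrary node $v_{ij}$: the edge $v_{ij}v_{ji}$ separates $T_F$ from $T_{F'}$, the edge $v_{ij}v_{ik}$ separates $C_{u_i}$ from $T_F$, and the edge $v_{ij}v_{ik'}$ separates $C_{u_i}$ from $T_{F'}$, so the face-cycle at $v_{ij}$ is $C_{u_i}\mbox{-}T_F\mbox{-}T_{F'}$, namely a $d_i$-gon, a $2|F|$-gon and a $2|F'|$-gon. Now specialise. If $X$ has type $[q^p]$, then $d_i=p$ and $|F|=|F'|=q$ at every node, so the face-cycle of every node of $T(X)$ is a $p$-gon together with two $2q$-gons, i.e.\ of type $[p^1,(2q)^2]$ (a genuine type since $q\ge 3$ and, in the situations where we use this, $p\neq 2q$). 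If $X$ has type $[p^1,q^1,p^1,q^1]$, then $d_i=4$ at every node, and since the faces around $u_i$ alternate between $p$-gons and $q$-gons, every edge at $u_i$ — in particular $u_iu_j$ — borders one $p$-gon and one $q$-gon; hence $\{|F|,|F'|\}=\{p,q\}$ and the face-cycle of every node is a $4$-gon, a $2p$-gon and a $2q$-gon, which are pairwise distinct because $p\neq q$ and $p,q\ge 3$, i.e.\ of type $[4^1,(2p)^1,(2q)^1]$. Since all nodes of $T(X)$ acquire the same face-cycle type and $T(X)$ is a map on the same surface $S$, it is semi-equivelar of the asserted type.

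The step I expect to be the main obstacle is the identification of the three faces at $v_{ij}$, i.e.\ checking from Definition~\ref{dfn1} that exactly $C_{u_i}$, $T_F$ and $T_{F'}$ meet there — in particular that the two non-vertex faces are the truncations of the two faces of $X$ through $u_iu_j$, rather than of faces through the other edges at $u_i$ — together with their cyclic order. Once the local picture $C_{u_i}\mbox{-}T_F\mbox{-}T_{F'}$ is established, comparing it with the definition of semi-equivelar type is routine bookkeeping.
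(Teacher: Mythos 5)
Your argument is correct, but note that the paper itself offers no proof of this proposition: it is recalled from the reference \cite{DM2018} (``We recall (from \cite{DM2018}) some combinatorial versions of truncation and rectification on maps''), so there is nothing in the paper to compare against line by line. Your self-contained local analysis is exactly the right way to establish it from Definition \ref{dfn1}: the $3$-regularity of $T(X)$, the two families of faces (a $d_i$-gon $C_{u_i}$ for each original vertex and a $2m$-gon $T_F$ for each original $m$-gon), and the identification of the face-cycle at $v_{ij}$ as $C_{u_i}$-$T_F$-$T_{F'}$ with $F,F'$ the two faces of $X$ through the edge $u_iu_j$ are all verified correctly, and the specialisations to $[q^p]$ and $[p^1,q^1,p^1,q^1]$ (using the alternation of $p$-gons and $q$-gons around a vertex in the latter case) follow. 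Your parenthetical caveats are also the right ones to flag: the formula $[p^1,(2q)^2]$ presupposes $p\neq 2q$, and $[4^1,(2p)^1,(2q)^1]$ presupposes $p\neq q$, which holds by definition of the type $[p^1,q^1,p^1,q^1]$. The only point I would tighten is the claim that ``a short incidence count confirms that these are all the faces'': since Definition \ref{dfn1} declares $(V,E)$ to be a graph embedded on the same surface $S$, it is worth saying explicitly that the proposed cycles are pairwise internally disjoint, cover every edge exactly twice, and give the correct Euler characteristic, so they are forced to be the face boundaries of the map $T(X)$.
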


\begin{proposition}\label{lem5}
Let  $X $  $\& $  $R(X) $ be as in Definition \ref{dfn2}. Then, if  $X $ is semi-equivelar of type  $[q^p] $  $( $resp.,  $[p^1, q^1, p^1, q^1]) $,  then  $R(X) $ is also semi-equivelar
and of type  $[p^1, q^1, p^1, q^1] $  $( $resp.,  $[4^1, p^1, 4^1, q^1]) $.
\end{proposition}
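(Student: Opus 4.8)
The plan is to describe the faces of $R(X)$ explicitly and then read off the face-cycle at each of its vertices. Recall from Definition \ref{dfn2} that the vertices of $R(X)$ are the edges of $X$, and that two of them are adjacent in $R(X)$ exactly when the corresponding edges of $X$ are consecutive edges on a common face of $X$; since $X$ is a polyhedral map, each edge of $X$ lies on exactly two faces. I would first record that $R(X)$ has two families of faces. For each face $F = w_1w_2\cdots w_m$ of $X$, the edges $w_1w_2, w_2w_3, \dots, w_mw_1$ form a simple $m$-cycle in $R(X)$ bounding a face $\widehat{F}$; and for each vertex $v$ of $X$ of degree $d$, the $d$ edges through $v$, listed in the cyclic order in which their faces appear in the face-cycle at $v$, form a simple $d$-cycle in $R(X)$ bounding a face $\widehat{v}$. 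These are all the faces of $R(X)$: this is the standard description of the medial map, and it can also be seen from scratch by observing that the faces just listed are pairwise distinct and number $f_0 + f_2$, while the fact that $R(X)$ is a map on $S$ forces it to have exactly $\chi - (|V(R(X))| - |E(R(X))|) = \chi - (f_1 - 2f_1) = f_0 + f_2$ faces (here $|E(R(X))| = \sum_F (\text{size of }F) = 2f_1$).

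Next I would compute the face-cycle at a vertex $e = w_iw_j$ of $R(X)$. Let $F, F'$ be the two faces of $X$ through $e$. Then $e$ has exactly four neighbours in $R(X)$ — the other edge of $F$ at $w_i$, the other edge of $F$ at $w_j$, the other edge of $F'$ at $w_j$, and the other edge of $F'$ at $w_i$ — and in the rotation at $e$ they occur in this cyclic order. Hence the faces of $R(X)$ incident to $e$, read cyclically, are $\widehat{F}\mbox{-}\widehat{w_j}\mbox{-}\widehat{F'}\mbox{-}\widehat{w_i}$: the region between the two edges coming from $F$ is $\widehat{F}$, the region between the two edges at $w_j$ is $\widehat{w_j}$, and so on. Thus $e$ lies on exactly four faces, of sizes $|F|,\ \deg(w_j),\ |F'|,\ \deg(w_i)$ in that cyclic order, and these four faces are pairwise distinct since $F \neq F'$ and $w_i \neq w_j$.

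Finally I would specialise. If $X$ is of type $[q^p]$ (with $p \neq q$), then every face of $X$ is a $q$-gon and every vertex has degree $p$, so the cyclic face-size sequence at every vertex of $R(X)$ is $(q, p, q, p)$, whence $R(X)$ is semi-equivelar of type $[q^1, p^1, q^1, p^1] = [p^1, q^1, p^1, q^1]$. If $X$ is of type $[p^1, q^1, p^1, q^1]$ (so $p \neq q$), then every vertex of $X$ has degree $4$ and, reading off the face-cycle $p\mbox{-}q\mbox{-}p\mbox{-}q$ at any vertex, every edge of $X$ separates a $p$-gon from a $q$-gon; hence at each vertex $e = w_iw_j$ of $R(X)$ one of $F, F'$ is a $p$-gon and the other a $q$-gon while $\deg(w_i) = \deg(w_j) = 4$, so the cyclic face-size sequence at $e$ is $(p, 4, q, 4)$, and $R(X)$ is semi-equivelar of type $[p^1, 4^1, q^1, 4^1] = [4^1, p^1, 4^1, q^1]$.

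The only point requiring care is the claim that $\{\widehat{F}\}_{F}\cup\{\widehat{v}\}_{v}$ is the complete list of faces of $R(X)$; I expect this to be the main (and essentially the sole) obstacle, and it is dispatched either by quoting the medial-map facts underlying Definition \ref{dfn2} (cf. \cite{DM2018}) or by the Euler-characteristic bookkeeping above. Everything else is a direct reading of the rotation system at an edge-vertex of $R(X)$.
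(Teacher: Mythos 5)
Your argument is correct, but note that the paper does not actually prove Proposition~\ref{lem5}: together with Definition~\ref{dfn2} and Proposition~\ref{lem4} it is presented as material recalled from \cite{DM2018}, so there is no in-paper proof to compare against, and your proposal supplies the verification the paper omits. The two ingredients you use are exactly the standard facts underlying the medial-map construction: first, the identification of the faces of $R(X)$ as the cycles $\widehat{F}$ (one $|F|$-gon per face $F$ of $X$) and $\widehat{v}$ (one $\deg(v)$-gon per vertex $v$ of $X$), with completeness confirmed by the Euler count $\chi-f_1+2f_1=f_0+f_2$; and second, the reading of the rotation at a vertex $e=w_iw_j$ of $R(X)$, which yields the cyclic face-size sequence $\bigl(|F|,\deg(w_j),|F'|,\deg(w_i)\bigr)$. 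The specialisations are carried out correctly, including the key observation that when $X$ has type $[p^1,q^1,p^1,q^1]$ every edge of $X$ separates a $p$-gon from a $q$-gon. The only points left implicit are harmless: one needs $p\neq q$ for the stated types to be in reduced form (otherwise $R(X)$ has type $[p^4]$), and the polyhedrality of $X$ (two faces sharing at most one edge, all face sizes and degrees at least $3$) is what guarantees that $e$ has four distinct neighbours and four distinct incident faces, so that the face-cycle at $e$ is genuinely a $4$-cycle in the dual graph.
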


\begin{proposition} (\cite{tu2018}) \label{prop16}
Let  $X $ be a semi-equivelar map of type $Y$ on the surface of Euler char.  $-1 $. If $Y = [4^1, 6^1, 16^1] $ then $X \cong \cal{K}_{8}$ or $\cal{K}_{9}$. If $Y = [3^1, 4^1, 8^1, 4^1] $ then $X \cong \cal{K}_{10}$ or $\cal{K}_{11}$. If $Y = [6^2, 8^1] $ then $X \cong \cal{K}_{12}$ or $\cal{K}_{13}$.
\end{proposition}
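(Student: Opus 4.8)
These three assertions are the main results of \cite{tu2018}; the plan I would follow to re-derive them is the ``grow the map from one vertex'' enumeration that also underlies Theorem \ref{theo:sf1}. First I would pin down the global combinatorics using Lemma \ref{lem1}: a map of type $[4^1,6^1,16^1]$ on the surface of Euler characteristic $-1$ has exactly $48$ vertices, $72$ edges and face vector $(x_4,x_6,x_{16})=(12,8,3)$; one of type $[3^1,4^1,8^1,4^1]$ has $24$ vertices, $48$ edges and $(x_3,x_4,x_8)=(8,12,3)$; and one of type $[6^2,8^1]$ has $24$ vertices, $36$ edges and $(x_6,x_8)=(8,3)$. Hence each of the three cases is a finite classification problem, and the remaining work is to run the enumeration without missing or double-counting a map.

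For a fixed type, choose a vertex $u$; its link-cycle is, up to relabelling, the unique cyclic arrangement of faces prescribed by the type, so I may name $u$, its $d\in\{3,4\}$ neighbours, and the remaining vertices lying on the faces through $u$. I then process the neighbours of $u$ one at a time: the face-cycle condition at a neighbour $w$ forces which of the already-present faces lie around $w$ and in what cyclic order, and this either makes two not-yet-identified vertices coincide or introduces new named vertices together with new faces. Iterating this breadth-first produces a rooted search tree, finite because the total vertex count is fixed; along the way one prunes every branch that violates a face-cycle, creates a face of the wrong length, makes two vertices share more than one edge or a face repeat a vertex (failure of polyhedrality), or closes up to the wrong Euler characteristic. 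The structural fact that keeps the tree small is the rigidity of the unique large face --- the $16$-gon, resp. the octagon and the two hexagons meeting it --- whose boundary vertices are pairwise distinct and which, together with the small faces hung on its edges, already consumes most of the vertex budget, leaving only bounded freedom for the rest of the gluing.

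When the search terminates it returns a short list of completed maps; I would then delete isomorphic duplicates by comparing combinatorial invariants (the face-adjacency graph, the vertex- and face-degree data, automorphism counts) and, where these fail to separate two maps, by exhibiting an explicit incidence-preserving bijection. The assertion is that exactly two non-isomorphic maps survive in each case, and the last step is to verify that they coincide with the explicitly described maps $\mathcal{K}_8,\mathcal{K}_9$ (resp. $\mathcal{K}_{10},\mathcal{K}_{11}$; $\mathcal{K}_{12},\mathcal{K}_{13}$) of \cite{tu2018}, drawn in Example \ref{eg:8maps-torus}. I note in passing that $[4^1,6^1,16^1]$ and $[4^1,3^1,4^1,8^1]=[3^1,4^1,8^1,4^1]$ are respectively the truncation and the rectification of type $[3^1,8^1,3^1,8^1]$ in the sense of Propositions \ref{lem4} and \ref{lem5}, so in principle one could instead classify the (possibly non-polyhedral) objects of that type and then apply the operators $T$ and $R$; but $[3^1,8^1,3^1,8^1]$ is ruled out as a polyhedral map on this surface already in Lemma \ref{lem1}, and $[6^2,8^1]$ does not arise this way at all, so the direct enumeration is the cleaner uniform route.

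The main obstacle is purely the volume and delicacy of the case analysis: for $[4^1,6^1,16^1]$ there are $48$ vertices and three distinct face sizes, the branching is extensive, and a single overlooked branch --- or a pair of non-isomorphic maps mistakenly merged --- changes the final count. Controlling this by hand demands a rigid labelling anchored on the big face so that bad branches die early, or else independent verification by computer; the two $24$-vertex types are considerably lighter, but the isomorphism sieve at the end still needs the same care.
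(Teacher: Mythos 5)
The paper does not prove this proposition at all: it is imported verbatim from \cite{tu2018}, and the only ``proof'' in this paper is the citation. So the relevant comparison is with the methodology of that reference and with the paper's own treatment of the analogous type $[4^3,5^1]$ in Lemma \ref{lemma1}, and on that score your plan is the right one. Your preliminary bookkeeping is correct: Lemma \ref{lem1} gives $n=48$, $f_1=72$, $(x_4,x_6,x_{16})=(12,8,3)$ for $[4^1,6^1,16^1]$; $n=24$, $f_1=48$, $(x_3,x_4,x_8)=(8,12,3)$ for $[3^1,4^1,8^1,4^1]$; and $n=24$, $f_1=36$, $(x_6,x_8)=(8,3)$ for $[6^2,8^1]$, each consistent with $\chi=-1$. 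Your observation that $[4^1,6^1,16^1]$ and $[3^1,4^1,8^1,4^1]$ are $T$ and $R$ of $[3^1,8^1,3^1,8^1]$ via Propositions \ref{lem4} and \ref{lem5}, but that this shortcut is blocked because $[3^1,8^1,3^1,8^1]$ is excluded in Lemma \ref{lem1}, is also accurate and is exactly why the direct link-by-link enumeration is what \cite{tu2018} actually does.

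The gap is that you have written a strategy, not a proof. The entire mathematical content of the statement --- that the enumeration closes up in \emph{exactly two} non-isomorphic maps in each of the three cases, rather than zero, one, or five --- lives in the case analysis you defer. Nothing in your proposal certifies completeness of the search tree or correctness of the isomorphism sieve; you say so yourself in the final paragraph. Compare with Lemma \ref{lemma1} of this paper, where the analogous claim for $[4^3,5^1]$ occupies several pages of explicit branching on link completions and explicit isomorphisms collapsing duplicate branches: that level of detail (or a verifiable computer search) is what separates the plan from the theorem. As a blind reconstruction of the cited result, your proposal identifies the correct framework and all the correct invariants, but it does not establish the conclusion; to stand on its own it would need either the full hand enumeration in the style of Lemma \ref{lemma1} or an explicit appeal to \cite{tu2018} as the paper itself makes.
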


\begin{proposition} (\cite{bu2019}) \label{prop14}
	Let  $X $ be a semi-equivelar map of type  $[3^1, 4^1, 3^1, 4^2] $ on the surface of Euler char.  $-1 $. Then, $X \cong \cal{K}_{14}$.
\end{proposition}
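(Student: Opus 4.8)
The plan is to feed the arithmetic classification of Lemma~\ref{lem1} into a direct combinatorial reconstruction. By Lemma~\ref{lem1}, a semi-equivelar map $X$ of type $[3^1,4^1,3^1,4^2]$ on the surface of Euler characteristic $-1$ has exactly $n=12$ vertices; hence $X$ has $f_1=30$ edges and $f_2=17$ faces, of which $8$ are triangles and $9$ are squares, every vertex has degree $5$, and the face-cycle at every vertex is of the form $T$-$S$-$T$-$S$-$S$ (one triangle, one square, one triangle, then two consecutive squares). Since $\chi(X)=-1$ is odd, the underlying surface is automatically the non-orientable surface of Euler genus $3$, so no separate surface identification is needed once a consistent map has been built.

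First I would fix a vertex $u_1$ and write out its link-cycle using the standard shape of a face recalled at the start of Section~\ref{sec:proofs-1}. The five faces at $u_1$ may be taken to be the triangles $u_1u_2u_3$, $u_1u_4u_5$, the square $u_1u_3xu_4$ that lies between them, and the two consecutive squares $u_1u_5yu_6$, $u_1u_6zu_2$, so that the link-cycle of $u_1$ is
\[
u_2\text{-}u_3\text{-}x\text{-}u_4\text{-}u_5\text{-}y\text{-}u_6\text{-}z\text{-}u_2 .
\]
Thus the closed neighbourhood of $u_1$ already consists of the $9$ vertices $u_1,\dots,u_6,x,y,z$, and only $3$ further vertices of $X$ remain; it is this scarcity that makes the reconstruction short.

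Next I would impose the semi-equivelar condition at $u_2,u_3,u_4,\dots$ in turn. At such a vertex the faces already known to contain it, together with the requirement that its face-cycle again be $T$-$S$-$T$-$S$-$S$, force the cyclic order of its remaining faces, hence new edges and, repeatedly, identifications of not-yet-named vertices with vertices already present. Whenever a genuine choice appears — essentially, which triangle-or-square slot an already-known edge fills at the current vertex, and how $x,y,z$ and the three remaining vertices get glued — I would branch. In each branch one of two things must occur: either a contradiction is reached (a vertex is forced into a face-cycle of the wrong type, or two faces are forced to meet in more than one edge, or a repeated edge or wrong vertex-degree appears, so that the object is not a polyhedral map), or the branch closes to a complete $12$-vertex map. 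Finally one checks that the surviving complete maps are pairwise isomorphic and produces an explicit isomorphism with $\mathcal{K}_{14}$ by matching vertex stars.

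The step I expect to be the real obstacle is this middle one. One must enumerate the branches without omission — the face-cycle condition at a vertex can be met in two rotational orientations, so each undetermined vertex typically contributes a binary split — and one must recognise when two partial maps differ only by an automorphism of the starting link, so as not to overcount the number of completions. After every identification the polyhedrality condition ``two faces meet in at most an edge'' has to be rechecked, and it is precisely this check that eliminates the spurious branches; the surviving arithmetic is trivial because $n=12$ is so small.
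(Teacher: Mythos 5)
This proposition is not proved in the paper at all: it is quoted from the external reference \cite{bu2019}, so there is no internal argument to compare against beyond noting that the cited work (and this paper, for the analogous type $[4^3,5^1]$ in Lemma~\ref{lemma1}) proceeds by exactly the link-chasing enumeration you describe. Your preliminary data are correct: $n=12$, $f_1=30$, $f_2=17$ with $8$ triangles and $9$ squares, the face-cycle $T$-$S$-$T$-$S$-$S$, the $9$-vertex closed star of $u_1$, and the observation that odd $\chi$ forces the non-orientable genus-$3$ surface. So the framework and the strategy are the right ones.

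The genuine gap is that the proposal stops exactly where the proof begins. For a classification statement of this kind the exhaustive branch analysis \emph{is} the proof: one must actually run through every admissible completion of the links of $u_2,u_3,\dots$, every possible identification of $x,y,z$ and the three remaining vertices, kill each spurious branch by an explicit polyhedrality or face-cycle violation, and then exhibit explicit isomorphisms between the surviving completions (compare the several pages of subcases in the proof of Lemma~\ref{lemma1}, where the analogous enumeration for $[4^3,5^1]$ produces three non-isomorphic maps rather than one). You acknowledge this middle step as ``the real obstacle'' but do not carry it out, and nothing in the plan guarantees in advance that the enumeration terminates in a unique map rather than in several or in none; indeed the outcome genuinely varies with the type. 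As written, therefore, the proposal establishes neither the existence nor the uniqueness of $\mathcal{K}_{14}$; it is a correct description of how the proof in \cite{bu2019} must go, not a proof.
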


\begin{proposition} (\cite{utm2014}) \label{prop15}
Let  $X $ be a semi-equivelar map of type  $[3^5, 4^1]$ on the surface of Euler char.  $-1 $. Then, $X \cong \cal{K}_{15}, \cal{K}_{16}$ or $\cal{K}_{17}$.
\end{proposition}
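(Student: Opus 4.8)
The plan is to turn the classification into a finite, and in fact quite small, combinatorial search. First I would invoke Lemma~\ref{lem1}: a map $X$ of type $[3^5,4^1]$ on the surface of Euler characteristic $-1$ has exactly $n=12$ vertices, so every vertex has degree $6$, whence $f_1=\frac12\cdot 12\cdot 6=36$ and, from $12-36+f_2=-1$, $f_2=23$; writing $x_3,x_4$ for the numbers of triangles and squares, $x_3+x_4=23$ and $3x_3+4x_4=72$ force $x_4=3$, $x_3=20$. Since the type is $[3^5,4^1]$, each vertex lies on exactly one square, so the three squares occupy $3\times 4=12$ vertex slots and are therefore pairwise vertex-disjoint, partitioning $V(X)$. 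Relabelling, the squares are the $4$-cycles $Q_1=(1,2,3,4)$, $Q_2=(5,6,7,8)$, $Q_3=(9,10,11,12)$, and for any vertex $u$ the link-cycle is a $7$-cycle in which the square through $u$ contributes a path of length $2$ with the square-opposite of $u$ in the middle, the five triangles contribute the remaining five edges, and $u$'s four other neighbours lie in the two squares not containing $u$. Simple counts (each square is a $4$-cycle, each vertex has four ``outer'' neighbours, each square-edge lies in one triangle) give, as consistency checks, exactly $8$ edges between each pair of squares, $12$ triangles of the form ``a square-edge together with an apex in another square'', and $8$ triangles with one vertex in each square.

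Next I would run a breadth-first propagation of the semi-equivelar condition. Fix vertex $1$: up to relabelling its link $7$-cycle may be taken to be $2\mbox{-}3\mbox{-}4\mbox{-}a\mbox{-}b\mbox{-}c\mbox{-}d\mbox{-}2$ with $\{a,d\}$ and $\{b,c\}$ adjacent pairs inside $Q_2\cup Q_3$, which already fixes $Q_1$ and the triangles $14a$, $1ab$, $1bc$, $1cd$, $1d2$. Then I would process the already-named vertices one at a time; at each such vertex one square and at least one triangle are known, and the link must again be a $7$-cycle of the prescribed shape with degree exactly $6$, so there are only finitely many completions, each of which either names a new triangle on existing vertices or identifies an outer neighbour with an already-named vertex of the appropriate square. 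I would branch on those choices and prune any branch violating polyhedrality (two faces meeting in more than a common edge, a repeated vertex in a face, a vertex forced onto two squares, a vertex of degree $\neq 6$) or requiring a thirteenth vertex. With only $12$ vertices, $3$ squares and $20$ triangles available, this search tree is finite and shallow.

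I expect the enumeration to end with exactly three branches that close up to a consistent polyhedral map on $12$ vertices; since $\chi=-1$ is odd these live on the non-orientable surface of Euler genus $3$, and they are $\mathcal{K}_{15}$, $\mathcal{K}_{16}$, $\mathcal{K}_{17}$. To finish, I would check that no two of them are isomorphic, for instance by computing the orders of their automorphism groups, or with a cheaper invariant such as the pattern by which the three squares are joined by their $8$-edge bundles, or the sub-configuration formed by the eight ``tri-coloured'' triangles.

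The step I expect to be the main obstacle is the bookkeeping in the case analysis: branching on how successive vertices' outer neighbours are distributed among $Q_2$ and $Q_3$, and in which cyclic positions, produces a fair number of sub-cases, and one must be careful both never to drop an admissible branch and to detect when two branches yield isomorphic maps, so that $3$ comes out as an exact count rather than merely an upper bound. The structural reduction to a partition of the twelve vertices into three disjoint squares, together with the $8$--$8$--$8$ split of inter-square edges, is precisely what keeps the search small enough to carry out by hand.
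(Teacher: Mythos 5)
This proposition is not proved in the paper at all: it is imported verbatim from \cite{utm2014}, so there is no in-house argument to compare against. The closest internal analogue is Lemma \ref{lemma1} (the $[4^3,5^1]$ classification), and your strategy --- fix one vertex's link up to relabelling, propagate the link condition vertex by vertex, branch, prune against polyhedrality and the $12$-vertex bound, then separate the survivors by automorphism groups --- is exactly that style of argument. Your preliminary structure is also correct and well chosen: $n=12$, $f_1=36$, $f_2=23$, hence $x_4=3$ and $x_3=20$; each vertex lies on exactly one square, so the three squares partition $V(X)$; the three equations $e_{ij}+e_{ik}=16$ do force the $8$--$8$--$8$ split of inter-square edges; and the $12$ ``square-edge plus apex'' triangles versus $8$ tri-coloured triangles is right (a triangle cannot use a diagonal of a square without violating polyhedrality). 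One small slip: in the link $2\mbox{-}3\mbox{-}4\mbox{-}a\mbox{-}b\mbox{-}c\mbox{-}d\mbox{-}2$ the adjacent pairs among the outer neighbours are $\{a,b\}$, $\{b,c\}$, $\{c,d\}$ (a path), not $\{a,d\}$ and $\{b,c\}$.

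The genuine gap is that the classification itself is never executed. Everything after ``I would run a breadth-first propagation'' is a description of a search you have not performed, and the sentence ``I expect the enumeration to end with exactly three branches'' simply asserts the conclusion. For a result of this kind the case analysis \emph{is} the proof: completeness (no fourth map) and existence (three consistent completions actually close up) both live entirely inside that tree, and --- as the paper's own Lemma \ref{lemma1} shows for the neighbouring type $[4^3,5^1]$ --- the tree is several pages of sub-cases with non-obvious contradictions and isomorphism identifications, not a shallow formality. Likewise the non-isomorphism of $\mathcal{K}_{15},\mathcal{K}_{16},\mathcal{K}_{17}$ is only named as something one ``would check.'' As it stands the proposal is a correct and well-motivated reduction to a finite search, but it does not establish the proposition; to count as a proof it would need the explicit case tree (or an appeal to \cite{utm2014}, which is all the paper itself does).
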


\begin{proposition} (\cite{tu2018}) \label{prop20}
Let  $X $ be a semi-equivelar map of type  $[p_1^{n_1}, \dots, p_k^{n_k}] $ on the surface of Euler char.  $-1 $. Then,  $[p_1^{n_1}, \dots, p_k^{n_k}] \neq [3^4, 8^1], [3^1, 6^1, 4^1, 6^1], [3^2, 4^1, 3^1, 6^1],$ $[4^3, 6^1]$ $\&$ $[4^1, 8^1, 12^1]$.
\end{proposition}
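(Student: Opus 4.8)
\medskip

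\noindent\textbf{Proof plan.} The five forbidden types split into two groups according to how much work they need. For $[3^2,4^1,3^1,6^1]$, $[4^3,6^1]$ and $[4^1,8^1,12^1]$ nothing beyond Lemma~\ref{lem1} is required: running the Euler-characteristic bookkeeping exactly as in the proof of that lemma (substitute $f_1=f_0d/2$ and $f_2=f_0\sum_i m_i/q_i$ into $f_0-f_1+f_2=-1$) forces $f_0=12$, $f_0=12$ and $f_0=24$ respectively, and in each case the largest face occurs only twice ($x_6=2$ in the first two types, $x_{12}=2$ in the third). Hence none of these three pairs $(n,[\,\cdots])$ appears in the list produced by Lemma~\ref{lem1}, so by that lemma no semi-equivelar map of these types exists on the surface; Proposition~\ref{prop} is not even needed here.

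This leaves $[3^1,6^1,4^1,6^1]$ and $[3^4,8^1]$, which are combinatorially feasible --- they occur in the conclusion of Lemma~\ref{lem1} at $n=12$ and $n=24$ --- so a genuine non-realizability argument is needed. For $[3^1,6^1,4^1,6^1]$ I would use a pigeonhole argument exploiting polyhedrality. If such an $X$ existed then $n=12$, with $4$ triangles, $3$ squares and $4$ hexagons. In the face-cycle $3,6,4,6$ the two hexagons are never consecutive, so $X$ has no edge lying on two hexagons; therefore any two of its four hexagons meet in at most one vertex (in a polyhedral map the intersection of two faces is empty, a vertex, or an edge). But every vertex lies on exactly two hexagons, hence picks out an unordered pair of distinct hexagons, and two different vertices cannot pick out the same pair, as that pair of hexagons would then share two vertices. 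This gives an injection of the $12$ vertices into the $\binom{4}{2}=6$ unordered pairs of hexagons --- impossible.

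The type $[3^4,8^1]$ is the main obstacle and needs a detailed local analysis. Here $n=24$, the three octagons partition $V(X)$ (each vertex lies on exactly one octagon and $3\cdot 8=24$), there are $32$ triangles, and the $36$ non-octagon edges are precisely the triangle--triangle edges. Fixing an octagon $O=a_1\cdots a_8$, I would observe that each edge $a_ia_{i+1}$ lies on a unique triangle $T_i=a_ia_{i+1}b_i$ with $b_i\notin O$, and then read off from the face-cycle $8,3,3,3,3$ at $a_i$ the two triangles wedged between $T_{i-1}$ and $T_i$, their third vertices, and which octagon each of those vertices lies on; along the way one checks that each of the eight ``interior'' triangles (those with no octagon edge) has exactly one vertex on each of the three octagons. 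Iterating this forced propagation around the three octagons --- and using repeatedly that a triangle shares at most one edge, hence at most two and never all three of its vertices, with any octagon, and at most one edge with any other triangle --- one is led to a configuration that cannot be completed to a closed polyhedral map of Euler characteristic $-1$; the contradiction should come, as in the analogous treatments of $[3^5,4^1]$ and $[3^1,4^1,3^1,4^2]$, from being forced to identify apices in a way violating polyhedrality. Completing this case analysis cleanly is the bulk of the work, and I expect it to be the hardest part.
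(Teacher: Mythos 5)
This proposition is not proved in the paper at all --- it is imported from \cite{tu2018} --- so there is no internal argument to measure your attempt against; I can only judge it on its own terms. Four fifths of it are fine. The three types $[3^2,4^1,3^1,6^1]$, $[4^3,6^1]$ and $[4^1,8^1,12^1]$ are indeed absent from the list produced by Lemma~\ref{lem1}, and your numbers ($f_0=12,12,24$ with $x_6=x_6=x_{12}=2$) are correct; just be aware that this exclusion rests entirely on the step ``$x_i\ge 3$'' which Lemma~\ref{lem1} invokes without justification, whereas \cite{tu2018} rules these types out by direct combinatorial arguments rather than by this counting shortcut. Your pigeonhole argument for $[3^1,6^1,4^1,6^1]$ is correct and complete: in the face-cycle $3,6,4,6$ no two hexagons are consecutive, so no edge lies on two hexagons, polyhedrality then forces any two hexagons to meet in at most one vertex, and $v\mapsto\{\mbox{the two hexagons at }v\}$ injects $12$ vertices into $\binom{4}{2}=6$ pairs. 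This is shorter and cleaner than the link-chasing used for the analogous exclusions that the paper does prove internally (Lemmas~\ref{lem6}, \ref{lem8}, \ref{lem9}).

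The genuine gap is $[3^4,8^1]$. For this type you give only a plan --- propagate forced triangles around the three octagons and look for a polyhedrality violation --- and you say yourself that the contradiction ``should come'' from somewhere and that completing the case analysis is the bulk of the work. No contradiction is actually derived, and every fact you do establish (three disjoint octagons, $32$ triangles, $60$ edges, the apex $b_i\notin O$, the structure of the interior triangles) is consistent with a hypothetical map, so nothing written excludes $[3^4,8^1]$. Since $(24,[3^4,8^1])$ survives the counting of Lemma~\ref{lem1}, this is exactly the case where a substantive argument is required --- comparable to the proof of Lemma~\ref{lem8} for $[3^4,7^1]$, which pins down the six disjoint heptagons and exhausts the ways the remaining triangles can attach. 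As it stands, the proposal proves the proposition for four of the five types and leaves the hardest one open.
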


\begin{proposition} (\cite{am2019})\label{prop19}
Let  $X $ be a semi-equivelar map of type  $[p_1^{n_1}, \dots, p_k^{n_k}] $ on the surface of Euler char.  $-1 $. Then,  $[p_1^{n_1}, \dots, p_k^{n_k}] \neq [3^1, 5^3]$.
\end{proposition}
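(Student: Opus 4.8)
The plan is to argue by contradiction. Suppose $X$ is a semi-equivelar map of type $[3^1,5^3]$ on the surface of Euler characteristic $-1$. By Lemma~\ref{lem1}, $X$ has exactly $n=15$ vertices, and the standard incidence counts (each vertex has degree $4$ and lies in exactly one triangle and three pentagons, in the cyclic order $(\text{triangle},\text{pentagon},\text{pentagon},\text{pentagon})$ up to reflection) force $X$ to have $f_1=30$ edges, $x_3=5$ triangular faces and $x_5=9$ pentagonal faces; note $15-30+14=-1$, consistent.

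First I would isolate the combinatorial skeleton. Since each of the $15$ vertices lies in exactly one of the $5$ triangular faces and $5\cdot 3=15$, the triangular faces are pairwise vertex-disjoint and partition $V(X)$; call them $T_0,\dots,T_4$. Hence every vertex has precisely two edges inside its own triangle and precisely two \emph{external} edges; in the rotation at a vertex the two external edges are consecutive and bound the one pentagon that is ``opposite'' the triangular corner, while the two pentagons flanking the triangle each contain an edge of that vertex's triangle. Contracting each $T_i$ to a node turns $X$ into a loopless $6$-regular multigraph $H$ on $5$ nodes with $15$ edges. Since $X$ is polyhedral, two faces meet in at most one edge, so no pentagon can contain two consecutive triangle edges; thus each pentagon carries at most two (non-adjacent) triangle edges, and as there are $15$ triangle edges and $9$ pentagons the distribution is very tight.

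Next I would build $X$ outward from a fixed triangle $T_0=\{1,2,3\}$: name the external neighbours of $1,2,3$ and the three pentagons at each vertex, then propagate labels using the two closure rules --- ``every pentagon is a $5$-cycle occurring in the rotation at each of its vertices'' and ``a vertex already meets its unique triangular face, so no further triangle may close up on it.'' I would split into cases according to how the six external edges of $T_0$ distribute among $T_1,\dots,T_4$ (equivalently, the local picture of $H$ at $T_0$), using the $S_3$-symmetry of $T_0$ and the reflection symmetry of the type to prune, and in each surviving case show the partial map cannot be completed to a closed surface with exactly $5$ triangles and $9$ pentagons on $15$ vertices.

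The main obstacle is keeping this case analysis genuinely short: the propagation is purely local, so each pentagon-closure may branch, and the argument stands or falls on the bookkeeping --- the multigraph $H$, the ``at most two triangle edges per pentagon'' budget, and the ``one triangular face per vertex'' ban --- being strong enough to kill almost every branch instantly (by forcing a repeated triangle at a vertex, a pentagon with a chord, a vertex of degree $\neq 4$, or an edge lying in three faces). A secondary point is to certify that the last few partial configurations truly fail to close, for which an Euler-characteristic or parity check on the partially assembled complex is the cleanest finisher; this produces the contradiction and hence $[p_1^{n_1},\dots,p_k^{n_k}]\neq[3^1,5^3]$.
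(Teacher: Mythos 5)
Your setup is sound and your numerology is all correct: with $\chi=-1$ and type $[3^1,5^3]$ one gets $n=15$, $f_1=30$, five triangles and nine pentagons; the triangles are pairwise vertex-disjoint and partition $V(X)$; each vertex has two triangle edges and two consecutive external edges; and polyhedrality does force the triangle edges inside a pentagon to be pairwise non-adjacent, hence at most two per pentagon. But note that the paper does not prove this statement at all --- Proposition~\ref{prop19} is imported verbatim from the cited reference \cite{am2019} --- so there is no internal argument to compare yours against, and your attempt has to stand on its own.

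As written, it does not. The decisive step --- ``in each surviving case show the partial map cannot be completed'' --- is announced but never carried out, and none of the constraints you establish beforehand close the argument by themselves: the edge budget gives only $15\le 2\cdot 9=18$, and the incidence count of ``opposite'' pentagons comes out consistent ($\sum_P(5-2k_P)=45-30=15$), so there is no purely arithmetic contradiction lurking in your skeleton. Everything therefore hinges on the exhaustive propagation from a fixed triangle $T_0$, which is exactly the part you defer (``the argument stands or falls on the bookkeeping,'' ``a secondary point is to certify that the last few partial configurations truly fail to close''). Until that case analysis is actually executed and each branch is killed by an explicit violation (a second triangle at a vertex, a chord in a pentagon, a wrong degree, or an edge in three faces), what you have is a credible plan for a proof, not a proof; the conclusion $[p_1^{n_1},\dots,p_k^{n_k}]\neq[3^1,5^3]$ is not yet established.
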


\begin{lemma}\label{lem6}
	Let  $X $ be a semi-equivelar map of type  $[p_1^{n_1}, \dots, p_k^{n_k}] $ on the surface of Euler char.  $-1 $. Then,  $[p_1^{n_1}, \dots, p_k^{n_k}] \neq [3^2, 4^1, 3^1, 5^1]$.
\end{lemma}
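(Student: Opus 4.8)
The plan is to start from the numerical data in Lemma \ref{lem1}: a semi-equivelar map $X$ of type $[3^2,4^1,3^1,5^1]$ on the surface of Euler characteristic $-1$ would have exactly $n=20$ vertices, be $5$-regular, and possess $f_1=50$ edges and $f_2=29$ faces, namely $20$ triangles, $5$ squares and $4$ pentagons. The goal is to extract enough rigidity from the vertex figure $[3^2,4^1,3^1,5^1]$ to reduce to a finite, checkable family of local configurations, and then to show that none of them closes up into a map on $20$ vertices with Euler characteristic $-1$.

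First I would record the global consequences of the vertex figure. Since exactly one face at each vertex is a square and exactly one is a pentagon, the five squares are pairwise vertex-disjoint (so they partition $V(X)$ into five $4$-sets) and the four pentagons are pairwise vertex-disjoint (so they partition $V(X)$ into four $5$-sets). Reading the cyclic sequence of edge-types around a vertex off the face-cycle $3,3,4,3,5$ shows that each vertex is incident to exactly one $(3,3)$-edge, two $(3,4)$-edges and two $(3,5)$-edges, that there are no $(4,4)$-, $(4,5)$- or $(5,5)$-edges, and hence that $X$ has precisely $10$ edges of type $(3,3)$, $20$ of type $(3,4)$ and $20$ of type $(3,5)$. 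Three facts follow: the $(3,3)$-edges form a perfect matching on $V(X)$; every edge of a square (resp. of a pentagon) carries a triangle on its other side; and, because two distinct faces of a polyhedral map share at most one edge, each triangle has exactly one $(3,3)$-edge, its remaining two edges being square- or pentagon-edges.

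Next I would analyse the neighbourhood of a pentagon. Let $P=v_1v_2v_3v_4v_5$ be a pentagon and let $T_i=v_iv_{i+1}w_i$ be the triangle across the edge $v_iv_{i+1}$. In the rotation $3,3,4,3,5$ at $v_i$, the pentagon $P$ is flanked by two triangles, one whose remaining edge at $v_i$ is the $(3,3)$-edge and one whose remaining edge at $v_i$ is a square-edge; thus at each $v_i$ exactly one of the two pentagon-edges has its outer triangle's apex equal to the $(3,3)$-partner of $v_i$, while for the other pentagon-edge the apex lies (adjacently) in the square containing $v_i$. Using the fact that each $T_i$ carries exactly one $(3,3)$-edge, one shows that the forward edge $v_iv_{i+1}$ plays the ``$(3,3)$-partner'' role at exactly one of its two ends, and then that this choice is the same at every $v_i$; i.e.\ after possibly reversing $P$ we may assume that $w_i$ is the $(3,3)$-partner of $v_i$ and that $w_i$ lies in, and is adjacent in, the square containing $v_{i+1}$, for every $i$. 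This rigidity, together with the analogous statement obtained around each square, pins the local picture down to very few possibilities and forces identifications among the $20$ vertices, the $10$ matched $(3,3)$-pairs, the five tetrads (squares) and the four pentads (pentagons).

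Finally I would run the propagation: start from one square together with its four bounding triangles, use the rigidity above to determine the surrounding faces and vertices step by step (each step being forced or branching finitely), and push until a contradiction appears --- a face forced to repeat an edge or a vertex, a vertex forced into a face-cycle other than $[3^2,4^1,3^1,5^1]$, the vertex count forced past $20$, or the required partition of the $20$ vertices into five tetrads and four pentads becoming impossible. I expect the main obstacle to be bookkeeping rather than any single clever step: the surface is non-orientable (it is $N_3$), so a global orientation is not available to prune the case tree, and one must keep careful track of how the tetrads, the pentads and the $(3,3)$-matching interlock in order to be sure the case analysis terminates and is exhaustive.
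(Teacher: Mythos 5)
Your structural observations are correct and genuinely nice: the counts ($n=20$, $f_1=50$, $f_2=29$ with $20+5+4$ faces), the disjointness of the squares and of the pentagons, the edge-type census ($10$ edges of type $(3,3)$ forming a perfect matching, $20$ of type $(3,4)$, $20$ of type $(3,5)$, none of type $(4,4)$, $(4,5)$ or $(5,5)$), and the coherent orientation of the apices $w_i$ around a pentagon all check out. (One small quibble: ``each triangle has exactly one $(3,3)$-edge'' does not follow from faces sharing at most one edge; it follows because two $(3,3)$-edges of a triangle would meet at a vertex that has only one $(3,3)$-edge, so each triangle has at most one, and the incidence count $2\cdot 10=20=|F_3|$ then forces exactly one.)

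The genuine gap is that the proof stops exactly where the work begins. The entire content of the lemma is the assertion that the finite propagation you describe in your last paragraph terminates in a contradiction in \emph{every} branch, and you do not carry out a single branch of it. This cannot be waved away as bookkeeping: for the neighbouring type $[4^3,5^1]$ on the same surface the analogous search does \emph{not} die --- it produces three actual maps (Lemma \ref{lemma1}) --- so nothing short of completing the enumeration establishes non-existence here. The paper's own proof is precisely this enumeration: it fixes $lk(1)$, forces the links of $2,9,8,15,14,7,13$, identifies the four pentagons among the candidates $\alpha,\beta,\gamma,\delta,\eta$, and splits into the two cases $\gamma=\eta$ and $\delta=\eta$, each of which is killed by explicit link-chasing (e.g.\ a $7$-cycle appearing in what must be the link of a degree-$5$ vertex, or an edge such as $[4,5]$ forced to lie on a square when it cannot). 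Your rigidity lemma about pentagons would shorten and organize that search, but as written the proposal is a plan for a proof, not a proof; you would need to exhibit the case tree and the contradiction at each leaf before the lemma is established.
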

\vspace{-5mm}
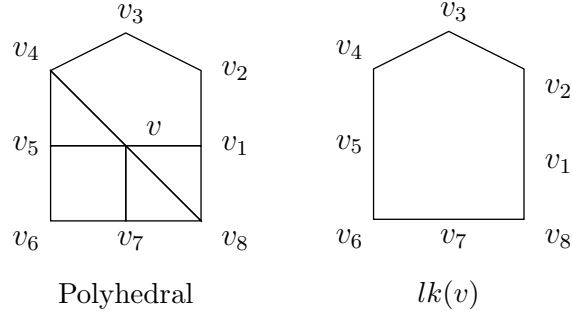
\begin{figure}[H]
	\begin{center}
		\begin{tikzpicture}[scale=1,line width=.5pt]
		\draw (0,0) node[anchor=south west]{\large{ $ v  $}}
		-- (1,0) node[anchor=west]{\large{ $ v_1  $}}
		-- (1,1) node[anchor=west]{\large{ $v_2 $}}
		-- (0,1.5) node[anchor=south]{\large{ $v_3 $}}
		-- (-1,1) node[anchor=south east]{\large{ $ v_4  $}}
		-- cycle;
		\draw (0,0)
		-- (-1,1)
		-- (-1,0) node[anchor=east]{\large{ $ v_5  $}}
		-- cycle;
		\draw (0,0)
		-- (-1,0)
		-- (-1,-1) node[anchor=north east]{\large{ $ v_6  $}}
		-- (0,-1) node[anchor=north]{\large{ $ v_7  $}}
		-- cycle;
		\draw (0,0)
		-- (0,-1)
		-- (1,-1) node[anchor=north west]{\large{ $ v_8  $}}
		-- cycle;
		\draw (0,0)
		-- (1,-1)
		-- (1,0)
		-- cycle;
		\node at (0,-2){Polyhedral};
		\end{tikzpicture}
		\hspace{0.5cm}
		\begin{tikzpicture}[scale=1,line width=.5pt]
			\draw (0,-1) node[anchor=north east]{\large{ $ v_6  $}}
			-- (1,-1) node[anchor=north]{\large{ $ v_7  $}}
			-- (2,-1) node[anchor=north west]{\large{ $ v_8  $}}
			-- (2,0) node[anchor=north west]{\large{ $ v_1  $}}
			-- (2,1) node[anchor=north west]{\large{ $ v_2  $}}
			-- (1,1.5) node[anchor=south]{\large{ $ v_3  $}}
			-- (0,1) node[anchor=south east]{\large{ $ v_4  $}}
			-- (0,0) node[anchor=east]{\large{ $ v_5  $}}
			-- cycle;
			\node at (1,-2){$ lk(v) $};
		\end{tikzpicture}
	\end{center}
	\caption{Example of  $ lk(v)  $ of type  $ [3^2,4^1,3^1,5^1]  $}
	\label{example_lk(v)_3(2)435}
\end{figure}
\begin{proof}
		Let  $X $ be a map of type  $[3^2,4^1,3^1,5^1] $ on the surface of  $\chi =-1 $. Let  $ V(X), F_3(X) $,  $F_4(X), F_5(X)  $ be \vx{}set, 3-gon face set, 4-gon face set and 5-gon face set respectively. Then  $ |V(X)|=20 $,  $|F_3(X)|=20 $,  $ |F_4(X)|=5 $ and  $ |F_5(X)|=4  $. Let  $V(X)=\{1,2,3,...,20\} $. We write  $lk(v) $ by the notation  $lk(v)=C_{8}([v_1,v_2,v_3,v_4],[v_5,v_6,v_7],v_8) $ where  {'$ [\ ] $' to indicate} $[v,v_1,v_2,v_3,v_4] $ form a 5-gon face,  $[v,v_{5},v_{6},v_{7}]  $ form a 4-gon face and  $ [v,v_4,v_5]  $,  $ [v,v_7,b_8]  $,  $ [v,v_1,v_8]  $ form 3-gon faces, see Fig. \ref{example_lk(v)_3(2)435}. Therefore always edges  $ [v_1,v_8]  $,  $ [v_7,v_8]  $ will adjacent to 4-gon and 5-gon respectively. {Without loss of generality}, let \lnk{\allowbreak 1}{2,3,4,5}{6,7,8}{9}, then  {\lnk{2}{3,4,5,1}{9,a,b}{c}}, \lnk{\allowbreak 9}{8,a_1,a_2,a_3}{a, b,2}{1}, \lnk{\allowbreak 8}{a_1,\allowbreak a_2,a_3,9}{1,6,7}{a_4}. Therefore without loss of generality, we can assume  $ (a_1,a_2,a_3)=(10,11,$ $12)  $,  $ (a,b,c)=(13,14,15)  $ and  $ a_4=16  $. Now \lnk{\allowbreak 15}{14,a_6,a_7,a_8}{a_9,a_{10},3}{2}, \lnk{\allowbreak 14}{a_6,a_7,a_8,15}{2,9,13}{a_5}, \lnk{\allowbreak 7}{a_{12},\allowbreak a_{13},a_{14},a_4}{8,1,6}{a_{11}} and \allowbreak \lnk{\allowbreak 13}{a_{17},a_{16},a_{15},a_5}{14,2,9}{12}. Let  $ \alpha=[1,2,3,4,5]  $,  $ \beta=[8,9,12,11,10]  $,  $ \gamma=[14,15,a_8,a_7,a_6]  $,  $ \delta=[13,a_5,a_{15},a_{16},a_{17}]  $,  $ \eta=[7,a_{12},a_{13},a_{14},4]  $. Then there are two cases, one is  $ F_5(X)=\{\alpha,\beta, \gamma=\eta,\delta\}  $ and other is  $ F_5(X)=\{\alpha,\beta,\gamma,\delta=\eta \}  $.

\smallskip
		
\begin{Case2} \textnormal{$F_5(X)=\{\alpha,\beta, \gamma=\eta,\delta\}  $ implies  $ \{a_6,a_7,a_8\}=\{7,16,17\}  $,  $ \{a_{12},a_{13},a_{14}\}=\{14,15,17\}  $ and  $ \{a_5,a_{15},a_{16},a_{17}\}=\{6,18,19,20\}  $. By considering  $ lk(7)  $, we see that  $ a_6\neq 7  $ and  $ a_5,a_{17}\neq 6  $, so with out loss of generality we can assume  $ a_5=18  $, $a_{17}=19 $, therefore $ a_{11}\in\{19,20\} $. If $a_{11}=19 $ then $a_{16}=6 $, $ a_{15}=20 $ and \lnk{\allowbreak 19}{6,20,18,13}{12,a_{18},\allowbreak a_{12}}{7}, which implies $ a_{12}\in\{15,17\} $. If $ a_{12}=17 $ then remaining 4-gons are $ [12,19,17,a_{18}] $, $ [3,15,a_9,a_{10}] $, $ [18,a_6,b_1,b_2] $ and we have $ [10,16] $ adjacent to 4-gon, which make a contradiction. Therefore $ a_{12}=15 $ and then $ a_{18}=3 $, $ (a_9,a_{10})=(12,19) $, $ (a_{13},a_{14})=(14,17) $, $ (a_6,a_7,a_8)=(17,16,7) $. Now \lnk{\allowbreak 3}{4,5,1,2}{15,19,12}{11}, \lnk{\allowbreak 12}{19,8,10,11}{3,15,19}{13} and remaining two 4-gons are $ [10,16,b_1,b_2] $ and $ [4,11,b_3,b_4] $ and $ \{b_1,b_2,b_3,b_4\}=\{5,17,18,20\} $. But none of $ b_3 $, $ b_4 $ is 5, therefore one of $ b_1 $, $ b_2 $ is 5. Now \lnk{\allowbreak 11}{12,9,8,10}{b_3,b_4,4}{3}, \lnk{\allowbreak 4}{5,1,2,3}{11,b_3,b_4}{a_{19}} which implies $ a_{19}\in\{10,16\} $, $ b_4\in\{11,17\} $. From here we see that $ b_4=17 $, $ a_{19}=19 $, $ b_1=5 $ which contradict to $ lk(16) $, therefore $ a_{11}=20 $. Now either $ a_6=a_{12} $ or $ a_6\neq a_{12} $. If $ a_6\neq a_{12} $ then $ [3,15,a_9,a_{10}] $, $ [18,a_6,b_1,b_2] $, $ [10,a_{12},b_3,b_4] $ are distinct 4-gons and we have $ [10,16] $, $ [12,19] $ are adjacent to 4-gons, which make contradiction. Therefore $ a_6=a_{12}=17 $ and then $ (a_{13},a_{14})=(14,15) $, $ (a_7,a_8)=(7,16) $ and remaining 4-gons are $ [3,15,a_9,a_{10}] $, $ [17,18,b_1,20] $, $ [10,16,b_2,b_3] $. Therefore $ \{a_9,a_{10}\}=\{12,19\} $, $ \{b_1,b_2,b_3\}=\{4,5,11\} $ which implies one of $ b_2 $, $ b_3 $ is 11. By considering $ lk(6) $, we get $ b_1=5 $, therefore $ \{b_2,b_3\}=\{4,11\} $ and $ \{a_{15},a_{16}\}={\{6,20\}} $. Now \lnk{\allowbreak 6}{18,13,19,20}{7,8,1}{5} and \lnk{18}{13,19,20,6}{5,20,17}{14}, this implies $ C(7,16,15,14,18,5,20)\in lk(17) $, a contradiction.}
\end{Case2}
	
\begin{Case2} \textnormal{$F_5(X)=\{\alpha,\beta,\gamma,\delta=\eta \} $ then one of $ a_6 $, $ a_7 $, $ a_8 $ is 6 and $ \{a_{12},a_{13},a_{14}\}=\{13,18,19\} $, $ \{a_6,a_7,a_8\}=\{6,17,20\} $ and $ \{a_{15},a_{16}\}=\{7,16\} $. This implies $ a_{13}=13 $, $ \{a_{12},a_{14}\}=\{18,19\} $ and $ a_{11}\in\{15,17,20\} $. Since $ [12,19] $ is adjacent to 4-gon, therefore if $ a_{12}=19 $ then $ lk(19) $ will not possible, so $ a_{12}=18 $, $ a_{14}=19 $ and $ a_{15}=7 $, $ a_{16}=16 $. Now remaining three distinct 4-gons are $ [10,16,b_1,b_2] $, $ [19,12,b_3,b_4] $, $ [18,a_6,b_5,a_{11}] $ and we also have another incomplete 4-gon $ [3,15,a_9,a_{10}] $. Therefore either $ \{b_1,b_2\}=\{3,15\} $ or $ \{b_3,b_4\}=\{3,15\} $ and $ b_5=11 $, $ a_6\in \{17,20\} $. Since $ [6,a_{11}] $ is adjacent to 5-gon, therefore $ a_{11}\neq 4,5 $ which implies $ [4,5] $ is adjacent to 4-gon, make a contradiction.}
\end{Case2}\vspace{-7mm}
\end{proof}

\begin{lemma}\label{lemma1}
	Let $K$ be a semi-equivelar map of type $ [4^3,5^1] $ on the surface of \Echar{-1}. Then, $K$ is isomorphic to $  \mathcal{K}_1 $, $  \mathcal{K}_2 $ or $  \mathcal{K}_3 $.
\end{lemma}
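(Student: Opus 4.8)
The first step is to pin down the combinatorial parameters. By Lemma \ref{lem1}, a semi-equivelar map of type $[4^3,5^1]$ on the surface $\chi=-1$ has exactly $n=20$ vertices, hence $f_1=40$ edges; counting incidences between faces and vertices gives $4x_4=3\cdot 20$ and $5x_5=20$, so $K$ has $x_4=15$ squares and $x_5=4$ pentagons, and $f_0-f_1+f_2=20-40+19=-1$ indeed holds. A useful structural observation before the enumeration: the face-cycle of every vertex contains exactly one pentagon, so the four pentagons are pairwise vertex-disjoint and their vertex sets partition $V(K)$ into four blocks of five; consequently each of the $20$ pentagon-edges lies on a unique square, while each of the remaining $20$ edges lies on two squares. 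Since $K$ is polyhedral, any two faces meet in at most a vertex or a single edge; in particular a square through an edge $uv$ of a pentagon meets that pentagon in the edge $uv$ only.

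The enumeration proceeds by the link-propagation method used in the proof of Lemma \ref{lem6}. I would relabel $V(K)=\{1,\dots,20\}$, fix one pentagon as $[1,2,3,4,5]$, and encode the link of a vertex $v$ as $\mathrm{lk}(v)=C_9(\dots)$, recording in cyclic order the pentagon at $v$ and its three squares. The link $\mathrm{lk}(1)$ can be written down up to relabelling, and then, whenever a vertex has all four of its incident faces determined, every further face through it is forced, which pins new adjacencies and new links. Because only $20$ labels are available, this propagation soon forces the tentatively-new vertices to be identified with previously-named ones, and each such identification must be consistent with (i) polyhedrality, (ii) every vertex having degree $4$, and (iii) every completed link being of type $[4^3,5^1]$.

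At the first point where the data are not forced -- typically the adjacency pattern of the second pentagon relative to the first, or which already-named vertex is the remaining corner of a particular square -- I would split into cases and continue the propagation in each. As in Cases 1 and 2 of the proof of Lemma \ref{lem6}, most branches terminate quickly in a forbidden configuration: two faces sharing two edges, a vertex repeated in a face, a vertex pushed into a fifth face, or a link that fails to be of type $[4^3,5^1]$. The surviving branches, once their $19$ faces are all written out, yield exactly three maps; matching the resulting face lists against the labelled pictures identifies them with $\mathcal{K}_1,\mathcal{K}_2,\mathcal{K}_3$ of Example \ref{eg:8maps-torus}. Finally one checks these three are pairwise non-isomorphic, either directly from their explicit descriptions or via an isomorphism invariant such as the multigraph on the four pentagon-blocks recording how many edges run between each pair.

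The genuine difficulty is not in any single deduction but in the size and reliability of the case tree: an omitted branch would be fatal, and each claimed contradiction must be verified to be a real obstruction rather than an artefact of an unlucky labelling. I would manage this by organising the whole argument around a short list of explicit binary choices (first the adjacency pattern between two chosen pentagons, then that of a third pentagon relative to those two), so that exhaustiveness is transparent, and by confirming that each surviving link-system actually closes up to a consistent map on the $\chi=-1$ surface by reading off all $19$ faces and re-checking $f_0-f_1+f_2=-1$.
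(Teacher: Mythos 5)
Your strategy is the same as the paper's: fix the parameters ($20$ vertices, $15$ squares, $4$ pentagons), encode links as $C_9(\cdots)$, fix $\mathrm{lk}(1)$ up to relabelling, and propagate, branching at each unforced choice and killing branches on polyhedrality or degree violations. Your preliminary observations are correct and worth having: since each face-cycle contains exactly one pentagon, the four pentagons are vertex-disjoint and partition $V(K)$, each pentagon edge lies on exactly one square, and the remaining $20$ edges each lie on two squares (the incidence count $15\cdot 4 = 20\cdot 1 + 20\cdot 2$ checks out). The paper uses this partition implicitly throughout.

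The gap is that you have described the proof rather than given it. The entire content of this lemma is the exhaustive case tree, and your proposal stops exactly where that work begins: ``the surviving branches \dots yield exactly three maps'' is the statement to be proved, not something that can be asserted. In the paper this occupies several pages: after fixing $\mathrm{lk}(1)$ and writing $\mathrm{lk}(2)$ with unknowns $a,b,c$, one first rules out $b=5$ (itself a long subargument), then takes $b=11$ and runs through $(a,c)\in\{(5,7),(7,5),(5,12),(7,12),(12,5),(12,7),(12,13)\}$, with the case $(a,c)=(12,13)$ alone splitting into three subcases and numerous sub-subcases, many of which survive only after being shown isomorphic to $\mathcal{K}_1$, $\mathcal{K}_2$ or $\mathcal{K}_3$ via explicit vertex permutations. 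None of this is routine to the point of being omittable; an overlooked branch or a spurious ``contradiction'' would invalidate the classification, as you yourself note. Separately, the pairwise non-isomorphism of $\mathcal{K}_1,\mathcal{K}_2,\mathcal{K}_3$ is not free either; the paper establishes it in Lemma \ref{lemma2} by computing the graphs $G_6(\mathcal{K}_i)$ and the automorphism groups ($D_2$, $\mathbb{Z}_2$, $D_4$), and your proposed pentagon-block multigraph invariant would likewise need to be computed and shown to distinguish the three maps. As it stands the proposal is a correct plan whose execution --- which is the proof --- is missing.
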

\begin{figure}[H]
	\begin{center}
		\begin{tikzpicture}[scale=1.5,line width=.5pt]
			\draw (0,0) node[anchor=west]{\large{ $ v $}}
			-- (0.5,0.5) node[anchor=south west]{\large{ $ v_1 $}}
			-- (0,1) node[anchor=south]{\large{ $v_2 $}}
			-- (-0.5,0.5) node[anchor=south east]{\large{ $v_3 $}}
			-- cycle;
			\draw (0,0)
			-- (-0.5,0.5)
			-- (-1,0) node[anchor=east]{\large{ $v_4 $}}
			-- (-0.5,-0.5) node[anchor=north east]{\large{ $v_5 $}}
			-- cycle;
			\draw (0,0)
			-- (-0.5,-0.5)
			-- (0,-1) node[anchor=north]{\large{ $v_6 $}}
			-- (0.5,-0.5) node[anchor=north west]{\large{ $v_7 $}}
			-- cycle;
			\draw (0,0)
			-- (0.5,-0.5)
			-- (1,-0.5) node[anchor=west]{\large{ $v_8 $}}
			-- (1,0.5) node[anchor=west]{\large{ $v_9 $}}
			-- (0.5,0.5)
			-- cycle;
			\node at (0,-1.5){Polyhedral};
		\end{tikzpicture}
		\hspace{0.5cm}
		\begin{tikzpicture}[scale=1.5,line width=.5pt]
			\draw (-1,0) node[anchor=east]{\large{ $v_4 $}}
			-- (-0.5,-0.5) node[anchor=east]{\large{ $v_5 $}}
			-- (0,-1) node[anchor=north]{\large{ $v_6 $}}
			-- (0.5,-0.5) node[anchor=north]{\large{ $v_7 $}}
			-- (1,-0.5) node[anchor=west]{\large{ $v_8 $}}
			-- (1,0.5) node[anchor=west]{\large{ $v_9 $}}
			-- (0.5,0.5) node[anchor=south]{\large{ $v_1 $}}
			-- (0,1) node[anchor=south]{\large{ $v_2 $}}
			-- (-0.5, 0.5) node[anchor=east]{\large{ $v_3 $}}
			-- cycle;
			\node at (0,-1.5){$ lk(v) $};
		\end{tikzpicture}
	\end{center}
      \caption{Example of $ lk(v) $ of type $ [4^3,5^1] $ }
      \label{example_lk(v)_4(3)5}
\end{figure}
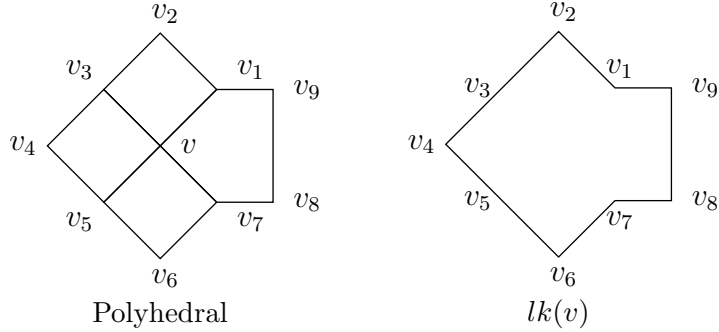
\begin{proof}
		 Let $ V(K) $, $F_4(K) $, $F_5(K) $ be \vx{}set, 4-gon face set and 5-gon face set respectively. Then $ |V(K)|=20 $, $ |F_4(K)|=15 $ and $ |F_5(K)|=4 $. Let $V(K)=\{1,2,\dots,20\} $. We write $lk(v) $ by the notation \klk{v\allowbreak}{\allowbreak v_1,v_2,v_3\allowbreak}{\allowbreak v_3,v_4,v_5\allowbreak}{\allowbreak v_5,v_6,v_7\allowbreak}{\allowbreak v_7,v_8,v_9,v_1} where  {'$ [\ ] $' to indicate} $ [v,v_1,v_9,v_8,v_7] $ form a 5-gon face and $ [v,v_1,v_2,v_3] $, $ [v,v_3,v_4,v_5] $, $ [v,v_5,v_6,v_7] $ form 4-gon faces, see  Fig. \ref{example_lk(v)_4(3)5}. Without loss{ of generality}, let \klk{1\allowbreak}{\allowbreak 2,3,4\allowbreak}{\allowbreak 4,5,6}{\allowbreak 6,7,8\allowbreak}{\allowbreak 8,9,10,2} and \klk{2\allowbreak}{\allowbreak 1,4,3\allowbreak}{\allowbreak 3,a,b\allowbreak}{\allowbreak b,c,10\allowbreak}{\allowbreak 10,9,8,1}.
	
		If $ b=5 $ then either $ a=6 $ or $ c=6 $. Now consider $ a=6 $. Then \klk{5\allowbreak}{\allowbreak 4,1,6\allowbreak}{\allowbreak 6,3,\allowbreak2\allowbreak}{\allowbreak 2,10,\allowbreak 3\allowbreak}{\allowbreak c,a_1,b_1,4} and \klk{6\allowbreak}{\allowbreak 3,2,5\allowbreak}{\allowbreak 5,4,1\allowbreak}{\allowbreak 1,8,7\allowbreak}{\allowbreak 7,a_2,b_2,3}. So without loss, assume $ (c,a_1,b_1)=(11,12,13) $ and $ (a_2,b_2)=(14,15) $. Therefore \klk{3\allowbreak}{\allowbreak 6,2,5\allowbreak}{\allowbreak 5,1,4}{4,13,15\allowbreak}{\allowbreak 15,14,7,6}, \klk{4\allowbreak}{\allowbreak 5,6,1\allowbreak}{\allowbreak 1,2,3\allowbreak}{\allowbreak 3,15,13\allowbreak}{\allowbreak 13,12,11,\allowbreak 5}, \klk{7\allowbreak}{\allowbreak 6,1,8\allowbreak}{\allowbreak 8,a_4,a_5\allowbreak}{\allowbreak a_5,a_6,14\allowbreak}{\allowbreak 14,15,3,6},  \klk{8\allowbreak}{\allowbreak 1,6,7\allowbreak}{\allowbreak 7,a_5,a_4\allowbreak}{\allowbreak a_4,\allowbreak a_7,9\allowbreak}{\allowbreak 9,10,2,1}, \klk{10\allowbreak}{\allowbreak 2,5,11\allowbreak}{\allowbreak 11,a_8,a_9\allowbreak}{\allowbreak a_9, a_{10},9\allowbreak}{\allowbreak 9,8,1,2}, \klk{11\allowbreak}{\allowbreak 5,2,\allowbreak 10\allowbreak}{\allowbreak 10,a_9,a_8\allowbreak}{\allowbreak a_8,a_{11},12\allowbreak}{\allowbreak 12,13,4,5}, \klk{13\allowbreak}{\allowbreak 4,3,15\allowbreak}{\allowbreak 15,a_{12},a_{13}\allowbreak}{\allowbreak a_{13},a_{14},12\allowbreak}{\allowbreak \allowbreak 12,11,5,4}, \klk{15\allowbreak}{\allowbreak 3,4,13\allowbreak}{\allowbreak 13,a_{13},a_{12}\allowbreak}{\allowbreak a_{12},a_{15},14\allowbreak}{\allowbreak 14,7,6,3}, \klk{12\allowbreak}{\allowbreak 13,\allowbreak a_{13},a_{14}\allowbreak}{\allowbreak a_{14},d_1,a_{11}\allowbreak}{\allowbreak a_{11},a_8,11\allowbreak}{\allowbreak 11,5,4,13}, \klk{9\allowbreak}{\allowbreak 8,a_4,a_7\allowbreak}{\allowbreak a_7,d_2,a_{10}\allowbreak}{\allowbreak a_{10},a_9,10\allowbreak\allowbreak}{\allowbreak 10,2,1,8} and \klk{14\allowbreak}{\allowbreak 15,a_{12},a_{15}\allowbreak}{\allowbreak a_{15},d_3,a_6\allowbreak}{\allowbreak a_6,a_5,7\allowbreak}{\allowbreak 7,6,3,15}, which implies $ a_5\in\{11,12\}\cup N $ and $ a_9\in\{14,15\}\cup N $ where $ N=\{16,17,18,19,20\} $.

		If $ a_5=11 $, then by considering $ lk(10) $ and $ lk(11) $, we get $ a_4=12 $, $ a_6=10 $ and then  $ a_{15}=a_{10}=16 $, $ a_7=a_{14}=17 $, $ a_{12}=18 $. Now \vr{}of remaining \5 are 16, 17, 18, 19, 20 and we have $ [9,16,d_2,17] $ is a face, which make a contradiction.
		
		If $ a_5=12 $ then either $ a_{11}=7 $ or $ a_{14}=7 $. $ a_{11}=7 $ implies $ \{a_8,d_1\}=\{8,14\} $. If $ (a_8,d_1)=(8,14) $ then $ a_4=11 $ and then from $ lk(11) $ we get $ a_9=9 $, $ a_7=10 $, which is a contradiction, therefore $ (a_8,d_1)=(14,8) $. Now from $ lk(14) $ we get $ a_6=11 $, $ a_9=a_{15} $, $ d_3=10 $, so without loss, we can assume $ a_9=16 $. Therefore \klk{16\allowbreak}{\allowbreak a_{10},9,10\allowbreak}{\allowbreak 10,11,14\allowbreak}{\allowbreak 14,15,a_{12}\allowbreak}{\allowbreak a_{12},\allowbreak b_1, b_2,a_{10}}, this implies $ a_{12}=17 $, $ [a_{10}=20] $, $ (b_1,b_2)=(18,19) $ and then after completing $ lk(17) $ we see that $ lk(9) $ will not possible. $ a_{14}=7 $ implies $ \{a_{13},d_1\}=\{8,14\} $. If $ (a_{13},d_1)=(8,14) $ then $ a_4=13 $, $ a_{14}=7 $, $ a_{12}=9 $, $ a_7=15 $. Now we have twelve \vr{}remain to complete link of 16, 17, 18, 19, 20, which is not possible, therefore $ (a_{13},d_1)=(14,8) $ and then $ a_6=13 $, $ a_{14}=7 $ and $ a_4=a_{11} $. So $ a_4 $ must be a new \vx{}, say 16. Now \klk{16\allowbreak}{\allowbreak a_8,11,12\allowbreak}{\allowbreak 12,7,8\allowbreak}{\allowbreak 8,9,a_7\allowbreak}{\allowbreak a_7,b_1,b_2,a_8}. So without loss, assume $ (a_7,a_8,b_1,b_2)=(17,18,19,20) $ and then after completing $ lk(20) $, we get $ a_9\notin V(K) $. Therefore $ a_5\neq 12 $.
		
		Hence $ a_5=16 $ and from $ lk(8) $, $ lk(11) $ and $ lk(13) $ we have $ a_4\in \{12,17\} $. If $ a_4=12 $ then either $ a_{11}=8 $ or $ a_{14}=8 $. $ a_{11}=8 $ implies $ [12,a_{{11}},d_1,a_{14}]=[12,8,7,16] $ and $ a_8=9 $, $ a_7=11 $, which implies $ [9,11,10,a_9] $, $ [9,10,a_9,a_{10}]\in F_4(K) $ which is not possible, therefore $ a_{14}=8 $. Then $ [12,a_{14},d_1,a_{11}]=[12,8,7,16] $ and $ a_{13}=9 $, $ a_7=13 $, $ d_2=15 $, $ a_{10}=a_{12} $, which implies $ a_{10}\in\{14,17\} $. $ a_{10}=14 $ implies $ a_{12}=13 $, $ a_{15}=9 $, $ d_3=10 $, $ a_6=a_9=17 $. Now \klk{16\allowbreak}{\allowbreak 17,14,7\allowbreak}{\allowbreak 7,8,12\allowbreak}{\allowbreak 12,11,20\allowbreak}{\allowbreak 20,19,18,17} i.e. $ a_8=20 $, $ a_{11}=16 $ which implies $ C(10,11,12,16,17)\in lk(20) $, a contradiction, therefore $ a_{10}=17 $. Now \klk{17\allowbreak}{\allowbreak a_9,10,9\allowbreak}{\allowbreak 9,13,\allowbreak 15\allowbreak}{\allowbreak 15,14,a_{15}\allowbreak}{\allowbreak a_{15},b_1,b_2,a_9} which implies $ \{a_9,a_{15}\}=\{16,18\} $. But $ a_{15}\neq16 $ as then $ [14,15,16,\allowbreak 17],[7,14,a_6,16]\in F_4(K) $, which is not possible and $ a_9\neq 16 $ as then $ lk(16) $ will not possible, therefore $ a_4\neq 12 $ i.e. $ a_4=17 $. Now \klk{16\allowbreak}{\allowbreak 17,8,7\allowbreak}{\allowbreak 7,14,a_6\allowbreak}{\allowbreak a_6,d_4,20\allowbreak}{\allowbreak 20,19,18,17} and \klk{17\allowbreak}{\allowbreak 16,7,8\allowbreak}{\allowbreak 8,9,a_7\allowbreak}{\allowbreak a_7,d_5,18\allowbreak}{\allowbreak 18,19,20,16}. So $ a_6=9 $ which implies $ [9,a_{10},d_2,a_7]=[9,16,7,14] $ and $ a_9=20 $, $ d_4=10 $, $ a_{15}=17 $, $ d_3=8 $, $ a_7=14 $, $ a_{12}=18 $, $ d_5=15 $. Now \klk{20\allowbreak}{\allowbreak 16,9,10\allowbreak}{\allowbreak 10,11,a_8\allowbreak}{\allowbreak a_8,d_6,19\allowbreak}{\allowbreak 19,18,17,16} i.e. $ a_8=12 $ and then \klk{18\allowbreak}{\allowbreak 17,14,15\allowbreak}{\allowbreak 15,\allowbreak 13,a_{13}\allowbreak}{\allowbreak a_{13},d_7,19\allowbreak}{\allowbreak 19,20,16,17} which implies $ a_{13}\notin V(K) $. So $ a_6\neq 9 $ and therefore $ a\neq6 $ for $ b=5 $. In the similar manner we see that $ c\neq6 $ for $ b=5 $ and therefore $ b\neq 5 $.
		
		So without loss, we assume $ b=11 $ and then $ (a,c)\in\{(5,7), (7,5), (5,12), (7,12), (12,5), \allowbreak (12,7), (12,13)\} $. $ (a,c)=(7,12) $ is isomorphic to $ (a,c)=(12,5) $ under the map $ (1\ 2)(3\ 4)\allowbreak (7\ 12)(8\  10) $, see \cite{datta2006}.
		\begin{Case}[$(a,c)=(5,7) $]\textnormal{
		 $ [5,6] $ will adjacent to \5, as if $ [6,7] $ is adjacent to \5 then $ lk(11) $ will not possible. Now \klk{5\allowbreak}{\allowbreak 6,1,4\allowbreak}{\allowbreak 4,a_1,11\allowbreak}{\allowbreak 11,2,3\allowbreak}{\allowbreak 3,a_2,a_3,6}, \klk{7\allowbreak}{\allowbreak 8,1,6\allowbreak}{\allowbreak 6,a_4,\allowbreak 10\allowbreak}{\allowbreak 10,2,11\allowbreak}{\allowbreak 11,a_5,a_6,8} and \klk{11\allowbreak}{\allowbreak 7,10,2\allowbreak}{\allowbreak 2,3,5\allowbreak}{\allowbreak 5,4,a_5\allowbreak}{\allowbreak a_5,\allowbreak  a_6,8,7} which implies $ a_1=a_5=12 $. From $ lk(4) $ we see that $ [1,4] $ can not be adjacent to \5, therefore $ [4,11] $ will adjacent to \5 which implies $ a_6=4 $ and then $ C(5,11,7,8)\in lk(12) $, is a contradiction.}
		\end{Case}
		\begin{Case}[$(a,c)=(7,5) $]\textnormal{
			 $ [5,6] $ is not adjacent to \5 as then $ lk(7) $ will not possible, therefore $ [6,7] $ and $ [4,5] $ are adjacent to \5. Now \klk{5\allowbreak}{\allowbreak 4,1,6\allowbreak}{\allowbreak 6,a_1,10\allowbreak}{\allowbreak 10,2,11\allowbreak}{\allowbreak 11,\allowbreak  a_2,a_3,4} which implies \klk{7\allowbreak}{\allowbreak 6,1,8\allowbreak}{\allowbreak 8,a_4,11\allowbreak}{\allowbreak 11,2,3\allowbreak}{\allowbreak 3,a_5,a_6} and \klk{11\allowbreak}{\allowbreak \allowbreak 5,10,2\allowbreak}{\allowbreak 2,3,7\allowbreak}{\allowbreak 7,8,\allowbreak  a_2\allowbreak}{\allowbreak a_2,a_3,4,5} and therefore $ a_2=a_4=12 $ and \klk{8\allowbreak}{\allowbreak 1,6,7\allowbreak}{\allowbreak 7,\allowbreak 11,12\allowbreak}{\allowbreak 12,a_7,9\allowbreak}{\allowbreak 9,10,2,1}, \klk{12\allowbreak}{\allowbreak 11,7,8\allowbreak}{\allowbreak 8,9,a_7\allowbreak}{\allowbreak a_7,a_8,a_3\allowbreak}{\allowbreak a_3,4,5,11} which implies $ a_3=13 $. Now $ \va{5} $, $ \va{6} $ must be two new \vr{}, so let $ \va{5}=14 $, $ \va{6}=15 $. Now \klk{6\allowbreak}{\allowbreak 7,8,1\allowbreak}{\allowbreak 1,4,5\allowbreak}{\allowbreak 5,10,15\allowbreak}{\allowbreak 15,14,3,7} i.e. $ \va{1}=15 $. From here we see that $\va{7}\in\{14,16\} $ and $[\va{7},\va{8}] $ is adjacent to \5. If $\va{7}=14 $ then after completing $\vl{14} $ and $\vl{3} $ we see that $\vl{4} $ will not possible, therefore $\va{7}=16 $. Without loss{ of generality}, assume $ [16,17,18,19,20]\in F_5(K) $ and $\va{8}=17 $. Then \klk{16\allowbreak}{\allowbreak 17,13,12\allowbreak}{\allowbreak 12,8,9\allowbreak}{\allowbreak 9,\va{9},20\allowbreak}{\allowbreak 20,19,\allowbreak 18,17}, \klk{9\allowbreak}{\allowbreak ,12,16\allowbreak}{\allowbreak 16,20,\va{9}\allowbreak}{\allowbreak \va{9},15,10\allowbreak}{\allowbreak 10,2,1,5}, \klk{10\allowbreak}{\allowbreak 2,11,5\allowbreak}{\allowbreak 5,6,15\allowbreak}{\allowbreak 15,\allowbreak  \va{9},9\allowbreak}{\allowbreak 9,\allowbreak 8,1,2} and \klk{3\allowbreak}{\allowbreak 7,11,2\allowbreak}{\allowbreak 2,1,4\allowbreak}{\allowbreak 4,\va{10},14\allowbreak}{\allowbreak 14,15,6,7}. As we have $ [16,20] $ adjacent to a \5, therefore $ [\va{9},15] $ will adjacent to a \5. So $ \va{9}=14 $ and then after completing $ \vl{14} $ we see that $ \vl{4} $ will not possible. }
		\end{Case}
	\begin{Case}[$(a,c)=(5,12) $]\textnormal{
		If $ [5,6] $ adjacent to \5 then \klk{5\allowbreak}{\allowbreak 6,1,4\allowbreak}{\allowbreak 4,\va{1},11\allowbreak}{\allowbreak 11,2,\allowbreak 3\allowbreak}{\allowbreak 3,\allowbreak \va{2},\va{3},6} which implies $ [4,5] $ to \5, which is not possible, therefore $ [6,7] $ is adjacent to \5. Now \klk{5\allowbreak}{\allowbreak 4,1,6\allowbreak}{\allowbreak 6,\va{1},3\allowbreak}{\allowbreak 3,2,11\allowbreak}{\allowbreak 11,\va{2},\va{3},4}, \klk{6\allowbreak}{\allowbreak 7,8,1\allowbreak}{\allowbreak 1,4,\allowbreak 5\allowbreak}{\allowbreak 5,3,\va{1}\allowbreak}{\allowbreak \va{1},\allowbreak \va{4},\va{5},7} and \klk{3\allowbreak}{\allowbreak \va{1},6,5\allowbreak}{\allowbreak 55,11,2\allowbreak}{\allowbreak 2,1,4\allowbreak}{\allowbreak 4,\vb{1},\vb{2},\allowbreak\va{1}} which implies $ [6,7,\va{5},\va{4},\va{1}]=[3,4,\vb{1},\vb{2},\va{1}] $, is a contradiction.}
	\end{Case}
\begin{Case}[$(a,c)=(7,12) $]\textnormal{
	 $ [6,7] $ is adjacent to \5 as if $ [5,6] $ adjacent to \5 then $ \vl{7} $ will not possible. Now \klk{6\allowbreak}{\allowbreak 7,8,1\allowbreak}{\allowbreak 1,4,5\allowbreak}{\allowbreak 5,\va{1},\va{2}\allowbreak}{\allowbreak \va{2},\va{3},\va{4},7} and from $ \vl{7} $, we get $ \va{4}\in\{3,11\} $.}
	\begin{Subcase}[$\va{4}=3 $]\textnormal{
		Then \klk{7\allowbreak}{\allowbreak 6,1,8\allowbreak}{\allowbreak 8,\va{5},11\allowbreak}{\allowbreak 11,2,3\allowbreak}{\allowbreak 3,\va{3},\va{2},6}, \klk{3\allowbreak}{\allowbreak 7,11,2\allowbreak}{\allowbreak 2,\allowbreak 1,4\allowbreak}{\allowbreak 4,\va{6},\va{3}\allowbreak}{\allowbreak \va{3},\va{2},6,7}, \klk{4\allowbreak}{\allowbreak 5,6,1\allowbreak}{\allowbreak 1,2,3\allowbreak}{\allowbreak 3,\va{3},\va{6}\allowbreak}{\allowbreak \va{6},\va{7},\va{8},\allowbreak 5} and \klk{11\allowbreak}{\allowbreak 12,\allowbreak 10,2\allowbreak}{\allowbreak 2,3,7\allowbreak}{\allowbreak 7,8,\va{5}\allowbreak}{\allowbreak \va{5},\va{9},\va{10},12}. From here we see that two \5 $ [3,7,6,\va{2},\va{3}] $ and $ [11,12,\allowbreak \va{10},\va{9},\va{5}] $ are not same, therefore $ \va{2}=13 $, $ \va{3}=14 $. Now \klk{8\allowbreak}{\allowbreak 1,6,7\allowbreak}{\allowbreak 7,11,\va{5}\allowbreak}{\allowbreak \va{5},\va{11},\allowbreak 9\allowbreak}{\allowbreak 9,10,2,1} and \klk{\va{5}\allowbreak}{\allowbreak 11,7,8\allowbreak}{\allowbreak 8,9,\va{11}\allowbreak}{\allowbreak \va{11},\allowbreak \va{12},\va{9}\allowbreak}{\allowbreak \va{9},\va{10},12,11} which implies $ \va{5}=15 $. Now two \4s $ [4,5,\va{8},\va{7},\va{6}] $ and $ [11,12,\allowbreak \va{10},\va{9},15] $ are either same of distinct.}
		
\begin{Subsubcase}\textnormal{If $[4,5,\va{8},\va{7},\va{6}] $ and $ [11,12,\va{10},\va{9},15] $ are same then $ \{\va{6},\va{7},\va{8}\}=\{11,12,$ $15\} $, $ \{\va{9},\va{10}\}=\{4,5\} $ and $ [16,17,18,19,20] $ is a \5 face. If $(\va{9},\va{10})=(4,5) $ then $ (\va{6},\va{7},\va{8})=(15,11,12)$, $ \va{11}=14 $, $ \va{12}=3 $ and then \klk{14\allowbreak}{\allowbreak 3,4,15\allowbreak}{\allowbreak 15,8,9\allowbreak}{\allowbreak 9,\va{13},\allowbreak 13\allowbreak}{\allowbreak 13,6,7,3}, \klk{9\allowbreak}{\allowbreak 8,15,14\allowbreak}{\allowbreak 14,13,\va{13}\allowbreak}{\allowbreak \va{13},\va{14},10\allowbreak}{\allowbreak 10,2,1,8} and \klk{13\allowbreak}{\allowbreak 14,9,\va{13}\allowbreak}{\allowbreak \va{13},\allowbreak \va{15},\va{1}\allowbreak}{\allowbreak \va{1},5,6\allowbreak}{\allowbreak 6,7,3,14}. From here we see that $ \va{1}\in\{12,16\} $. But if $ \va{1}=12 $ then after completing $ \vl{12} $ we get $ C(9,14,13,12,10)\in \vl{\va{13}} $, is a contradiction, therefore $ \va{1}=16 $. This implies $ \va{13}=12 $ and then after completing $ \vl{12} $, $ \vl{5} $ will not possible. Therefore $ (\va{9},\va{10})=(5,4) $ and then $ (\va{6},\va{7},\va{8})=(12,11,15) $. Now \klk{5\allowbreak}{\allowbreak 4,1,6\allowbreak}{\allowbreak 6,13,\va{12}\allowbreak}{\allowbreak \va{12},\va{11},15\allowbreak}{\allowbreak 15,11,12,4} which implies $ \va{1}=\va{12}=16 $ and $ \va{11}=17 $. Now \klk{17\allowbreak}{\allowbreak 16,5,15\allowbreak}{\allowbreak 15,8,9\allowbreak}{\allowbreak 9,\va{13},18\allowbreak}{\allowbreak 18,19,20,16} and \klk{9\allowbreak}{\allowbreak 8,15,17\allowbreak}{\allowbreak \allowbreak 17,18,\va{13}\allowbreak}{\allowbreak \va{13},\va{14},10\allowbreak}{\allowbreak 10,2,1,8} and from here we see that for any $ \va{13} $, $ \vl{\va{13}} $ will not possible.}
\end{Subsubcase}
\begin{Subsubcase}\textnormal{If $ [4,5,\va{8},\va{7},\va{6}] $ and $ [11,12,\va{10},\va{9},15] $ are distinct then without loss{ of generality}, let $ (\va{6},\va{7},\va{8},\allowbreak \va{9},\va{10})=(16,17,18,19,20) $ and then $ \va{11}\in\{13,17,18\} $ and \klk{5\allowbreak}{\allowbreak 4,1,6\allowbreak}{\allowbreak 6,13,\va{1}\allowbreak}{\allowbreak \allowbreak \va{1},\va{13},18\allowbreak}{\allowbreak 18,17,\allowbreak 16,4}. $ \va{11}=13 $ implies $ \va{1}=9 $, $ \va{12}=14 $ and then \klk{9\allowbreak}{\allowbreak 8,15,13\allowbreak}{\allowbreak \allowbreak 13,6,5\allowbreak}{\allowbreak 5,18,10\allowbreak}{\allowbreak 10,2,\allowbreak 1,8} i.e. $ \va{13}=10 $. Now \klk{10\allowbreak}{\allowbreak 2,11,12\allowbreak}{\allowbreak 12,\va{14},18\allowbreak}{\allowbreak 18,5,\allowbreak 9\allowbreak}{\allowbreak 9,8,1,2}, \klk{14\allowbreak}{\allowbreak 3,4,\allowbreak 16\allowbreak}{\allowbreak 16,\va{15},19\allowbreak}{\allowbreak 19,15,13\allowbreak}{\allowbreak 13,6,7,3}, \klk{18\allowbreak}{\allowbreak 5,9,\allowbreak 10\allowbreak}{\allowbreak 10,12,\va{14}\allowbreak}{\allowbreak \va{14},\va{16},17\allowbreak}{\allowbreak 1,16,4,5} and \klk{12\allowbreak}{\allowbreak 11,2,10\allowbreak}{\allowbreak 10,18,\va{14}\allowbreak}{\allowbreak \va{14},\va{17},20\allowbreak \allowbreak}{\allowbreak 20,19,15,11} which implies $ \va{14}\notin V(K) $. If $ \va{11}=17 $ then $ \va{12}\in\{16,18\} $. $ \va{12}=16 $ implies \klk{16\allowbreak}{\allowbreak 17,15,19\allowbreak}{\allowbreak 19,\va{14},14\allowbreak}{\allowbreak 14,3,4\allowbreak}{\allowbreak 4,5,\allowbreak 18,17}, \klk{17\allowbreak}{\allowbreak 16,19,15\allowbreak}{\allowbreak 15,\allowbreak 8,9\allowbreak}{\allowbreak 9,\va{15},18\allowbreak}{\allowbreak 18,5,4,16}, \klk{9\allowbreak}{\allowbreak 8,15,17\allowbreak}{\allowbreak 17,18,\allowbreak \va{15}\allowbreak}{\allowbreak \va{15},\va{16},10\allowbreak}{\allowbreak 10,2,1,8}, \klk{\allowbreak 19\allowbreak}{\allowbreak 15,17,16\allowbreak}{\allowbreak 16,14,\va{14}\allowbreak}{\allowbreak \va{14},\va{17},20\allowbreak}{\allowbreak 20,12,11,15} which implies $ \va{15}=20 $ and then $ C(11,2,1,8,9,20,12)\in\vl{10} $, which is a contradiction. Similarly $ \va{12}\neq 18 $. If $ \va{11}=18 $ then $ \va{12}=5 $ as if $ \va{12}=17 $, after completing $ \vl{18} $, we get $ \va{1}=10 $ which implies $ \vl{10} $ will not possible. Now $ \va{1}=19 $, $ \va{13}=15 $ and \klk{18\allowbreak}{\allowbreak 5,19,15\allowbreak}{\allowbreak 15,8,9\allowbreak}{\allowbreak 9,\va{14},17\allowbreak}{\allowbreak 17,\allowbreak 16,4,5}, \klk{19\allowbreak}{\allowbreak 15,18,5\allowbreak}{\allowbreak 5,6,13\allowbreak}{\allowbreak 13,\va{15},20\allowbreak}{\allowbreak 20,12,11,15} and \klk{9\allowbreak}{\allowbreak 8,15,\allowbreak 18\allowbreak}{\allowbreak 18,\allowbreak 17,\va{14}\allowbreak}{\allowbreak \va{14},\va{16},10\allowbreak}{\allowbreak 10,2,1,8} which implies $ \va{14}\notin V(K) $.}
\end{Subsubcase}
\end{Subcase}
\begin{Subcase}[$(\va{4}=11)$]\textnormal{Then \klk{7\allowbreak}{\allowbreak 6,1,8\allowbreak}{\allowbreak 8,\va{5},3\allowbreak}{\allowbreak 3,2,11\allowbreak}{\allowbreak 11,\va{3},\va{2}}, \klk{3\allowbreak}{\allowbreak 4,1,2\allowbreak}{\allowbreak 2,\allowbreak 11,7\allowbreak}{\allowbreak 7,8,\va{5}\allowbreak}{\allowbreak \va{5},\va{6},\va{7},4}, \klk{11\allowbreak}{\allowbreak 7,3,2\allowbreak}{\allowbreak 2,10,12\allowbreak}{\allowbreak 12,\va{7},\va{3}\allowbreak}{\allowbreak \va{3},\va{2},\allowbreak 6,7} and \klk{4\allowbreak}{\allowbreak 3,2,1\allowbreak}{\allowbreak 1,6,5\allowbreak}{\allowbreak 5,\va{8},17\allowbreak}{\allowbreak 17,16,15,3}, this implies $ (\va{2},\va{3})=(13,14) $ and $ (\va{5},\va{6},\va{7})=(15,16,\allowbreak 17) $. From here we see that $ [5,\va{1}] $ and $ [12,\va{7}] $ adjacent to same \5 and \vr\ of this \5 are $ 5,12,18,19,20 $ and $ \va{1}\neq12 $, $ \va{7}\neq 5 $, so $ [5,18,19,12,20] $ form a face and therefore $ \va{1}=18 $, $ \va{8}=20 $. Now after completing $ \vl{5} $, $ \vl{8} $, $ \vl{15} $, $ \vl{19} $ and $ \vl{12} $ respectively, we get $ C(1,8,9,19,12,11,2)\in \vl{10} $, which is a contradiction.}
\end{Subcase}
\end{Case}

\begin{Case}[$(a,c)=(12,7)$]\textnormal{Then $ [6,7] $ will adjacent to \5 as if $ [5,6] $ is adjacent to \5 then $ \vl{7} $ will not complete. Now \klk{7\allowbreak}{\allowbreak 6,1,8\allowbreak}{\allowbreak 8,\va{1},10\allowbreak}{\allowbreak 10,2,11\allowbreak}{\allowbreak 11,\va{2},\va{3},6}, \klk{10\allowbreak}{\allowbreak 2,11,7\allowbreak}{\allowbreak 7,8,\va{1}\allowbreak}{\allowbreak \va{1},\va{4},9\allowbreak}{\allowbreak 9,8,1,2} and \klk{8\allowbreak}{\allowbreak 1,6,7\allowbreak}{\allowbreak 7,10,\va{1}\allowbreak}{\allowbreak \va{1},\va{5},\allowbreak 9\allowbreak}{\allowbreak 9,10,2,1} which implies $ C(8,7,10,9)\in\vl{\va{1}} $, which make a contradiction.}
\end{Case}
\begin{claim}\label{claim1}If $ \vl{v} $ is like in  Fig. \ref{example_lk(v)_4(3)5}, then $ \vv{4},\vv{5},\vv{6}\notin V_{lk(v_1)}(K) $ and $ \vv{2},\vv{3},\vv{4}\notin V_{lk(v_7)}(K) $, where $ V_{lk(v)}(K) $ is the set of all \vr\ of $ \vl{v} $.
\end{claim}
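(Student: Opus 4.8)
The plan is to argue by contradiction, after two preliminary reductions. First, the reflection of Fig.\ \ref{example_lk(v)_4(3)5} fixing $v_4$ and interchanging $v_1\leftrightarrow v_7$, $v_2\leftrightarrow v_6$, $v_3\leftrightarrow v_5$, $v_8\leftrightarrow v_9$ is an automorphism of the local picture; it carries the assertion ``$v_4,v_5,v_6\notin V_{lk(v_1)}(K)$'' to ``$v_2,v_3,v_4\notin V_{lk(v_7)}(K)$'', so it suffices to prove the first. Second, I would pin down $V_{lk(v_1)}(K)$ exactly: the edge $vv_1$ lies in exactly the two faces $[v,v_1,v_2,v_3]$ (a $4$-gon) and $[v,v_1,v_9,v_8,v_7]$ (the $5$-gon), and these are the only faces at $v_1$ that contain $v$. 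Since the face-cycle of $v_1$ has type $[4^3,5^1]$, the two remaining faces at $v_1$ are $4$-gons, say $\gamma:=[v_1,v_2,x,w]$ and $\delta:=[v_1,w,y,v_9]$, where $w$ is the fourth neighbour of $v_1$ and $x,y$ the apices. Hence $V_{lk(v_1)}(K)=\{v,v_2,v_3,v_7,v_8,v_9,w,x,y\}$, nine vertices that are pairwise distinct because $lk(v_1)$ is a cycle in a polyhedral map.

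Now $v_4,v_5,v_6$ are three of the nine pairwise distinct vertices $v_1,\dots,v_9$ of $lk(v)$, so they differ from $v_2,v_3,v_7,v_8,v_9$; and each of them is a vertex of a $4$-gon through $v$, hence differs from $v$. Thus the claim reduces to showing that none of $v_4,v_5,v_6$ equals $w$, $x$ or $y$. The cleanest instance is $v_5=w$: then the edge $v_1v_5$ lies in the two $4$-gons $\gamma,\delta$, which are distinct from the $4$-gons $B:=[v,v_3,v_4,v_5]$ and $C:=[v,v_5,v_6,v_7]$ at $v$ (these contain $v_5$ but not $v_1$), and $B\neq C$; so $v_5$ would lie in four distinct $4$-gons, contradicting type $[4^3,5^1]$. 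Hence $v_5\neq w$.

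For the remaining identifications ($v_4=w$ or $v_6=w$, and $v_i\in\{x,y\}$ for $i\in\{4,5,6\}$) a single face-count at $v_i$ is not conclusive, and one has to complete one or two further links. In each case the hypothesis $v_i\in V_{lk(v_1)}(K)$ makes $v_i$ incident to one of $\gamma,\delta$ and simultaneously to $B$ or $C$; writing out the edge set and the face-cycle of $v_i$ (degree $4$, exactly three $4$-gons and one $5$-gon) forces a short explicit list of further coincidences among $\{v_2,v_9,w\}$ and $\{v_3,v_4,v_5,v_6,v_7\}$ — e.g.\ $v_4=w$ immediately forces $x=v_5$ or $y=v_5$ — and then completing the link of the vertex so identified produces either a vertex lying in four $4$-gons, or two distinct faces sharing two edges, both forbidden. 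Here it is useful to recall from Lemma \ref{lem1} that $|V(K)|=20$ while $K$ has exactly four $5$-gons, so the four pentagons partition $V(K)$ and no vertex lies in two pentagons; this frequently supplies the final contradiction when the $4$-gon count alone does not.

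The main obstacle is precisely this last block of cases: unlike $v_5=w$, which a one-vertex count closes, the cases $v_4=w$, $v_6=w$ and $v_i\in\{x,y\}$ need the local propagation carried a couple of steps further, and the bookkeeping of which vertices may coincide (constrained by the distinctness of the vertices within each individual link, by polyhedrality, and by the partition into pentagons) is where the real work lies; no single case is deep, but there are several of them. Once all are disposed of, $v_4,v_5,v_6\notin V_{lk(v_1)}(K)$, and the companion statement for $v_7$ follows from the reflection symmetry noted at the outset.
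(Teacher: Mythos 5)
Your setup is sound and is essentially the same strategy the paper uses: identify the nine vertices of $lk(v_1)$ as $\{v,v_2,v_3,v_7,v_8,v_9,w,x,y\}$, observe that only the coincidences $v_i\in\{w,x,y\}$ for $i=4,5,6$ need to be excluded, and kill each one by a contradiction obtained from the degree-$4$/three-4-gons-one-5-gon structure and polyhedrality. (In the paper's coordinates $v=1$, $v_1=2$, your $w,x,y$ are its $b,a,c$ and your $v_4,v_5,v_6$ are $5,6,7$; the paper's ``proof'' of the Claim is simply the assertion that the preceding cases $b=5$ and $(a,c)\in\{(5,7),(7,5),(5,12),(7,12),(12,5),(12,7)\}$ have already been eliminated.) Your disposal of $v_5=w$ by counting four distinct 4-gons at $v_5$ is correct and is in fact cleaner than anything the paper makes explicit for that subcase.

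The genuine gap is that everything past that point is a plan rather than a proof. The cases $v_4=w$, $v_6=w$ and $v_i\in\{x,y\}$ are exactly the ones that carry the weight of the statement, and in the paper each of them consumes a full paragraph of forced link completions before reaching a contradiction (a vertex acquiring five neighbours, a vertex in two pentagons, two faces sharing two edges, or a link that cannot be closed). You describe the \emph{kind} of contradiction you expect but never exhibit it; nothing in the proposal verifies that each branch actually terminates in an obstruction rather than in a consistent partial configuration, and that verification is the entire content of the claim. A small concrete symptom: you assert that $v_4=w$ ``immediately forces $x=v_5$ or $y=v_5$,'' but the degree count at $v_4$ only forces one of $x,y$ to lie in $\{v_3,v_5\}$; ruling out $y=v_3$ is not immediate (it is not blocked by polyhedrality alone and needs the observation that the fourth face at $v_4$ would then be a second pentagon through $v_9$'s pentagon-mate $v_4$'s neighbour $v_9$—i.e.\ the pentagon-partition argument). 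Until each of the remaining identifications is run to an explicit contradiction, the claim is not established.
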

{The proof of the Claim \ref{claim1} is} {clear} from the above cases. Without that being mentioned, we will often use this result in the next section of the proof.
\begin{Case}[$(a,c)=(12,13) $]\textnormal{
	Then \klk{8\allowbreak}{\allowbreak 1,6,7\allowbreak}{\allowbreak 7,\va{1},\va{2}\allowbreak}{\allowbreak \va{2},\va{3},9\allowbreak}{\allowbreak 9,10,2,1}, \klk{10\allowbreak}{\allowbreak 2,11,13\allowbreak}{\allowbreak 13,\va{4},\va{5}\allowbreak}{\allowbreak \va{5},\va{6},9\allowbreak}{\allowbreak 9,8,1,2} and \klk{9\allowbreak}{\allowbreak 10,\va{5},\va{6}\allowbreak}{\allowbreak \va{6},\va{7},\allowbreak\va{3}\allowbreak}{\allowbreak \va{3}, \va{2},8\allowbreak}{\allowbreak 8,1,2,10} which implies $ \va{2}\in\{12,13,14\}$.}
	
\begin{Subcase}[$\va{2}=12 $]\textnormal{Then by considering $ \vl{3} $, we have $ \va{1}\in\{3,11\} $. $ \va{1}=3 $ implies \klk{3\allowbreak}{\allowbreak 7,8,12\allowbreak}{\allowbreak 12,11,2\allowbreak}{\allowbreak 2,1,4\allowbreak}{\allowbreak 4,\va{8},\va{9},7} and \klk{12\allowbreak}{\allowbreak \va{3},9,8\allowbreak}{\allowbreak 8,7,3\allowbreak}{\allowbreak 3,2,\allowbreak 11\allowbreak}{\allowbreak 11,\va{10},\va{11},\allowbreak \va{3}}. Therefore $ [3,4,\va{8},\va{9},7] $ and $ [11,12,\va{3},\va{11},\va{10}] $ are distinct, which means $ \va{3}=14 $. Now \klk{14\allowbreak}{\allowbreak 12,8,9\allowbreak}{\allowbreak 9,\va{6},\va{7}\allowbreak}{\allowbreak \va{7},\va{12},\va{11}\allowbreak}{\allowbreak \va{11},\allowbreak \va{10},11,12}, \klk{11\allowbreak}{\allowbreak 12,3,2\allowbreak}{\allowbreak 2,10,13\allowbreak}{\allowbreak 13, \va{13},\va{10}\allowbreak}{\allowbreak \va{10},\va{11},14,12}, \klk{13\allowbreak}{\allowbreak \va{5},\va{5},10\allowbreak}{\allowbreak 10,2,11\allowbreak}{\allowbreak 11,\allowbreak\va{10},\va{13}\allowbreak \allowbreak}{\allowbreak \va{13},\va{14},\va{15},\va{4}} and\  \klk{4\allowbreak}{\allowbreak 3,2,1\allowbreak}{\allowbreak 1,6,5\allowbreak}{\allowbreak 5,\va{16},\va{8}\allowbreak}{\allowbreak \va{8},\va{9},7,3}. So $ [11,12,\allowbreak 14,\va{11},\va{10}] $ and $ [13,\va{4},\va{15},\va{14},\va{13}] $ are distinct faces and $ \va{10}\in\{5,15\} $. If $ \va{10}=5 $ then either \klk{5\allowbreak}{\allowbreak 11,13,4\allowbreak}{\allowbreak 4,1,6\allowbreak}{\allowbreak 6,\va{17},\va{11}\allowbreak}{\allowbreak \va{11},\allowbreak 14,12,11} or \klk{5\allowbreak}{\allowbreak 11,13,6\allowbreak}{\allowbreak 6,1,\allowbreak 4\allowbreak}{\allowbreak 4,\va{8},\va{11}\allowbreak}{\allowbreak \va{11},\allowbreak 14,12,11}. If \klk{5\allowbreak}{\allowbreak 11,13,4\allowbreak}{\allowbreak 4,\allowbreak 1,6\allowbreak}{\allowbreak 6,\va{17},\va{11}\allowbreak}{\allowbreak \va{11},14,12,11} then $ \va{13}=4 $ and two \5s $ [3,4,\va{8},\va{9},7] $, $ [13,\va{4},\va{15},\va{14},\va{4}] $ are same i.e. $ \va{14}=3 $, $ \va{15}=7 $. Without loss{ of generality}, we {can} conclude $ \va{9}=15 $, $ \va{11}=17 $ and $ [6,17,18,19,20] $ is a face, which contradict to $ \vl{6} $, therefore \klk{5\allowbreak}{\allowbreak 11,13,6\allowbreak}{\allowbreak 6,1,4\allowbreak}{\allowbreak 4,\va{8},\va{11}\allowbreak}{\allowbreak \va{11},14,12,11} and then $ [3,4,\va{8},\va{9},7] $ and $ [13,\va{4},\va{15},\va{14},6] $ are distinct. Now we may suppose that $ (\va{4},\va{8},\va{9},\va{11},\va{14},\allowbreak \va{15})=(15,16,17,18,19,20) $ and then after completing $ \vl{7} $, $ \vl{6} $ will not possible. Therefore $ \va{10}=15 $, which conclude that $ \va{10}=15 $ and $ \va{8}\in\{13,16\} $. If $ \va{8}=13 $ implies $ \va{4}=4 $, $ \va{5}=5 $, $ \va{16}=10 $, $ \va{15}=3 $, $ \va{14}=7 $ and without loss{ of generality}, we may assume that $ (\va{6},\va{9},\va{11},\va{17}\va{18})=(16,17,18,19,20) $. Now after completing $ \vl{6} $, $ \vl{7} $, $ \vl{9} $, $\vl{14} $ and $ \vl{16} $ respectively, we see that $ \vl{17} $ is not possible, therefore $ \va{8}=16 $. $ \vl{7} $ is not possible for $ \va{9}=13 $, $ \va{9}=17 $, $ \va{11}=18 $. {Therefore} \klk{7\allowbreak}{\allowbreak 3,12,8\allowbreak}{\allowbreak 8,1,6\allowbreak}{\allowbreak 6,\va{18},17\allowbreak}{\allowbreak 17,16,4,3} and \klk{6\allowbreak}{\allowbreak 5,4,1\allowbreak}{\allowbreak 1,8,7\allowbreak}{\allowbreak 7,17,\va{18}\allowbreak}{\allowbreak \va{18},\allowbreak\va{19},\va{20}}. Therefore two faces $ [5,6,\va{18},\va{19},\va{20}] $ and $ [13,\va{4},\va{15},\va{14},\va{13}] $ are same, which conclude $ \va{18}=19 $ and then \klk{5\allowbreak}{\allowbreak 6,4,1\allowbreak}{\allowbreak 1,16,\va{16}\allowbreak}{\allowbreak \va{16},\allowbreak \va{21},\va{20}\allowbreak}{\allowbreak \va{20},\va{19},19,6} with $ \va{16}\in\{15,18\} $. $ \va{16}=15 $ implies $ [15,\va{21}] $ will adjacent to \5, therefore $ \va{21}=18 $ and then after completing $ \vl{15} $ and $ \vl{18} $, we see that $ \va{6}=14 $, make contradiction. If $ \va{16}=18 $ then $ [18,\va{21}] $ will adjacent to \5 which implies $ \va{21}\in\{14,15\} $. But for $ \va{21}=14,15 $, we acquire $ \va{5}=\va{6} $ and $ \va{13}=19 $, respectively, which are not possible. Therefore $ \va{1}\neq 3 $ which conclude $ \va{1}=11 $, $ \va{3}=14 $. Now after completing $ \vl{13} $, $ \vl{7} $ and $ \vl{6} $ respectively, we obtain $ C(1,2,11,12,14,\va{11},4)\in \vl{3} $, which make contradiction.}
\end{Subcase}
\begin{Subcase}[$\va{2}=13 $]\textnormal{Then \klk{13\allowbreak}{\allowbreak \va{3},9,8\allowbreak}{\allowbreak 8,7,10\allowbreak}{\allowbreak 10,2,11\allowbreak}{\allowbreak 11,\va{8},\va{9},\va{3}}, \klk{7\allowbreak}{\allowbreak \va{6},9,10\allowbreak}{\allowbreak 10,13,8\allowbreak}{\allowbreak 8,1,6\allowbreak}{\allowbreak 6,\va{10},\va{11},\va{6}} and \klk{\va{3}\allowbreak}{\allowbreak 13,8,9\allowbreak}{\allowbreak 9,\va{6},\va{7}\allowbreak\allowbreak}{\allowbreak \va{7},\vb{1},\va{9}\allowbreak}{\allowbreak \va{9},\va{8},11,13} which indicate $ (\va{4},\va{5})=(8,7) $, $ \va{1}=10 $ and $ \va{6}=14 $. Now \klk{14\allowbreak}{\allowbreak 7,10,9\allowbreak}{\allowbreak 9,\va{3},\va{7}\allowbreak}{\allowbreak \va{7},\va{12},\va{11}\allowbreak}{\allowbreak \va{11},\va{10},6,7} which signify $ \va{3}=13 $, $ \va{7}=16 $ and \klk{16\allowbreak}{\allowbreak \vb{1},\va{9},15\allowbreak}{\allowbreak 15,9,14\allowbreak}{\allowbreak 14,\va{11},\va{12}\allowbreak}{\allowbreak \va{12},\vb{2},\vb{3}, \vb{1}}, \klk{6\allowbreak}{\allowbreak 7,8,1\allowbreak}{\allowbreak 1,4,\allowbreak 5\allowbreak}{\allowbreak 5,\va{13},\va{10}\allowbreak}{\allowbreak \va{10},\va{11},14,7} with $ \va{11}\in\{3,12,17\} $. But for $ \va{11}=3,12 $, after completing $ \vl{5} $, $ \vl{11} $, $ \vl{12} $ respectively, $ \vl{15} $ will not conceivable. Therefore $ \va{11}=17 $ and $ \va{10}=18 $, which is also not conceivable as then after completing $ \vl{5} $, $ \vl{4} $ will not possible. }
\end{Subcase}
\begin{Subcase}[$(\va{2}=14) $]\textnormal{Then $ \va{3}\in\{12,15\} $. If $ \va{3}=12 $ then after calculating $ \vl{12} $, $ \vl{11} $, $ \vl{14} $ and $ \vl{15} $ respectively, we come to a contradiction that $ \va{4} $ occur in two \5, therefore $ \va{3}=15 $ which indicate $ \va{5}=16 $, $ \va{6}=17 $ and $ \va{4}\in\{7,18\} $. If {$ \va{7}=7 $} then after calculating $ \vl{7} $, $ \vl{13} $, $ \vl{11} $, $ \vl{6} $ and $ \vl{16} $ respectively, we observe that $ [5,14,13,11,17] $ form a face which implies $ C(9,17,11,13,14,15)\in\vl{5} $, which is not conceivable, therefore $ \va{4}=18 $ and $ (\va{1},\va{7})\in\{(12,5),(12,19),(18,5),(18,12),(18,19),(19,5),(19,20)\} $. }

\textnormal{Let $\va{1}=12 $ then if $ [4,5] $ adjacent to \5 then after completing $ \vl{12} $, $ \vl{11} $ and $ \vl{14} $ respectively, we observe $ \vl{7} $ is not possible, therefore $ [3,4] $ is adjacent to \5. We have $ [2,3,12,11] $ and $ [7,8,14,12] $ are two faces, therefore $ [11,12] $, $ [13,18] $, $ [16,17] $ are adjacent to \5. Now $ \va{7}\neq5 $ as then $ [5,15] $, $ [5,17] $ are not adjacent to \5 which conclude $ \vl{5} $ is not conceivable. Therefore $ (\va{1},\va{7})\neq (12,5) $. For $ \va{7}=19 $, if $ [6,7] $ adjacent to \5 then completing $ \vl{12} $, $ \vl{7} $ subsequently, $ \vl{3} $ can not construct, similarly if $ [7,12] $ adjacent to \5 then $ \vl{14} $ will not possible, therefore $ (\va{1},\va{7})\neq(12,19) $. For $ \va{1}=18 $, if $ [6,7] $ is adjacent to \5 then $ [15,\va{7}] $ will adjacent to \5 which conclude $ \vl{16} $ will not conceivable, and similarly $ [5,6] $ is not adjacent to \5. Therefore $ (\va{1},\va{7})\notin\{(18,5),(18,12),(18,19)\} $ and so $ \va{1}=19 $. Now if $ \va{7}=5 $ then it will contradict to $ \vl{4} $ or $ \vl{6} $ for any possible of $ \vl{5} $, therefore $ (\va{1},\va{7})\neq(19,5) $, so $ (\va{1},\va{7})=(19,20) $ and then either \klk{3\allowbreak}{\allowbreak 4,1,2\allowbreak}{\allowbreak 2,11,12\allowbreak}{\allowbreak 12,\va{8},\va{9}\allowbreak}{\allowbreak \va{9},\va{10},\va{11},4} or \klk{3\allowbreak}{\allowbreak 11,12,2\allowbreak}{\allowbreak 2,1,4\allowbreak}{\allowbreak 4,\va{8},\va{9}\allowbreak}{\allowbreak \va{9},\va{10},\va{11},12}.}
		
\begin{Subsubcase} \textnormal{\klk{\allowbreak3\allowbreak}{\allowbreak 4,1,2\allowbreak}{\allowbreak 2,11,12\allowbreak}{\allowbreak 12,\va{8},\va{9}\allowbreak}{\allowbreak \va{9},\va{10},\va{11},4} then \klk{\allowbreak4\allowbreak}{\allowbreak 3,\allowbreak 2,1\allowbreak}{\allowbreak 1,\allowbreak  6,5\allowbreak}{\allowbreak 5,\va{12},\va{11}\allowbreak}{\allowbreak \va{11},\va{10},\va{9},3} and $ \va{9}\in\{14,15,16,17,18,19,20\} $. $ \va{9}=14 $ indicate $ \{{\va{8}},\va{10}\}=\{15,19\} $. $ (\va{8},\va{10})=(15,19) $ implies \klk{\allowbreak14\allowbreak}{\allowbreak 19,7,8\allowbreak}{\allowbreak 8,9,\allowbreak 15\allowbreak}{\allowbreak 15,12,\allowbreak 3\allowbreak}{\allowbreak 3,4,\va{11},19} and \klk{\allowbreak15\allowbreak}{\allowbreak 12,3,14\allowbreak}{\allowbreak 14,8,9\allowbreak}{\allowbreak 9,17,20\allowbreak}{\allowbreak 20,\va{13},\va{14},12} which conclude $ \va{11}=18 $, $ \va{12}\in\{13,16\} $, $ \{\va{13},\va{14}\}=\{6,7\} $ and \vr of the remaining \5 are 5, 11, 13, 16, 17. But $ ({\va{13}},\va{14})\neq (6,7) $ as then $ \vl{19} $ will not possible, and if $ (\va{13},\va{14})=(7,6) $ then completing $ \vl{6} $, $ \vl{7} $, $ \vl{5} $, $ \vl{11} $, $ \vl{13} $, $ \vl{16} $, we obtain $ C(8,14,3,4,18,20,7)\in \vl{19} $, which is not conceivable. Therefore $ (\va{8},\va{10})=(19,15) $. Now \klk{\allowbreak14\allowbreak}{\allowbreak 15,9,8\allowbreak}{\allowbreak 8,7,19\allowbreak}{\allowbreak 19,12,3\allowbreak}{\allowbreak 3,4,\allowbreak \va{11},15} and \klk{\allowbreak15\allowbreak}{\allowbreak 14,8,9\allowbreak}{\allowbreak 9,17,\allowbreak 20\allowbreak}{\allowbreak 20,\va{13},11\allowbreak}{\allowbreak 11,4,3,14} which indicate $ \va{11}=18 $, $ \va{13}\in\{13,16\} $ and for any $ \va{13} $, $ [20,\va{13}] $ will adjacent to \5. If $ \va{13}=13 $ then \klk{\allowbreak13\allowbreak}{\allowbreak 20,15,18\allowbreak}{\allowbreak 18,16,10\allowbreak}{\allowbreak 10,2,11\allowbreak}{\allowbreak 11,\allowbreak \va{14},\va{15}} which indicate $ [7,19] $, $ [12,19] $ are adjacent to same \5, so $ \{\va{14},\va{15}\}=\{6,7\} $ and \vr of the remaining \5 are 5, 12, 16, 17, 19. Now \klk{\allowbreak11\allowbreak}{\allowbreak 13,10,2\allowbreak}{\allowbreak 2,3,12\allowbreak}{\allowbreak 12,\allowbreak \va{16},\va{14}\allowbreak}{\allowbreak \va{14},\va{15},20,13} which implies $ (\va{14},\va{15})=(6,7) $ and $ \va{16}=5 $ and then we obtained following $ lk $ subsequently:  \klk{\allowbreak12\allowbreak}{\allowbreak 19,14,3\allowbreak}{\allowbreak 3,2,11\allowbreak}{\allowbreak 11,6,5\allowbreak}{\allowbreak 5,16,17,\allowbreak 19}, \klk{\allowbreak19\allowbreak}{\allowbreak 12,3,14\allowbreak}{\allowbreak 14,8,7\allowbreak}{\allowbreak 7,20,17\allowbreak}{\allowbreak 17,16,5,12}, \klk{\allowbreak7\allowbreak}{\allowbreak 6,1,8\allowbreak}{\allowbreak 8,14,19\allowbreak}{\allowbreak 19,\allowbreak 17, 20\allowbreak}{\allowbreak 20,\allowbreak 13,11,6}, \klk{\allowbreak16\allowbreak}{\allowbreak 17,9,10\allowbreak}{\allowbreak 10,13,18\allowbreak}{\allowbreak 18,4,5\allowbreak}{\allowbreak 5,12,19,17}, \klk{\allowbreak18\allowbreak}{\allowbreak 4,5,16\allowbreak \allowbreak}{\allowbreak 16,10,13\allowbreak}{\allowbreak 13,20,15\allowbreak}{\allowbreak 15,14,3,4}, \klk{\allowbreak5\allowbreak}{\allowbreak 12,11,6\allowbreak}{\allowbreak 6,1,4\allowbreak}{\allowbreak 4,18,16\allowbreak}{\allowbreak \allowbreak 16,17,19, 12}, \klk{\allowbreak6\allowbreak \allowbreak}{\allowbreak 7,8,1\allowbreak}{\allowbreak 1,4,5\allowbreak}{\allowbreak 5,12,11\allowbreak}{\allowbreak 11,13,20,7}, \klk{\allowbreak20\allowbreak}{\allowbreak 7,19,17\allowbreak}{\allowbreak \allowbreak 19,9,15\allowbreak}{\allowbreak 15,18,\allowbreak 13\allowbreak}{\allowbreak 13,\allowbreak 11,6,7} and \klk{\allowbreak17\allowbreak}{\allowbreak 16,10,9\allowbreak}{\allowbreak 9,15,20\allowbreak}{\allowbreak 20,7,19\allowbreak}{\allowbreak 19,12,5,\allowbreak 16}. Let this be the semi-equivelar map ${ \mathcal{K}_1} $. If $ \va{13}=16 $ then this map will isomorphic to $  \mathcal{K}_1 $ under the map $ (1\ 3)(2\ 4)(5\ 11)(6\ 12)(7\ 19)(8\ 14)(9\ 15)\allowbreak (10\ 18)(13\ 16)(17\ 20) $.}
				
\textnormal{$\va{9}=15 $ indicate $ \{\va{8},\va{10}\}=\{14,20\} $, but if $ (\va{8},\va{10})=(14,20) $ then completing $ \vl{15} $, $ \vl{14} $, $ \vl{12} $ subsequently, we see that $ \vl{5} $ is not conceivable, therefore $ (\va{8},\va{10})=(20,14) $. Now after completing $ \vl{14} $, $ \vl{15} $, we obtain that $ \vl{13} $ will not possible.}

\textnormal{$\va{9}=16 $ implies $ \{\va{8},\va{10}\}=\{17,18\} $. If $ (\va{8},\va{10})=(17,18) $ then \klk{\allowbreak16\allowbreak}{\allowbreak 3,12,17\allowbreak}{\allowbreak \allowbreak 17,9,\allowbreak 10\allowbreak}{\allowbreak 10,13,18\allowbreak}{\allowbreak 18\va{11},4} which indicate $ \va{11}=19 $, $ \va{12}=14 $ and then \klk{\allowbreak14\allowbreak}{\allowbreak 5,4,19\allowbreak}{\allowbreak \allowbreak 19,\allowbreak 7,8\allowbreak \allowbreak}{\allowbreak 8,9,15\allowbreak}{\allowbreak 15,\va{13},\va{14},5}, \klk{\allowbreak17\allowbreak}{\allowbreak 12,3,16\allowbreak}{\allowbreak 16,10,9\allowbreak}{\allowbreak 9,15,20\allowbreak}{\allowbreak \allowbreak 20,\va{15},\va{16},\allowbreak 12\allowbreak }, \klk{\allowbreak15\allowbreak}{\allowbreak 14,8,9\allowbreak}{\allowbreak 9,17,20\allowbreak}{\allowbreak 20,\va{17},\va{13}\allowbreak}{\allowbreak \va{13},\va{14},5,14}, \klk{\allowbreak5\allowbreak}{\allowbreak 14,19,4\allowbreak}{\allowbreak 4,1,\allowbreak 6\allowbreak}{\allowbreak 6,\va{18},\va{14}\allowbreak}{\allowbreak \va{14},\allowbreak \va{13},15,14}, \klk{\allowbreak12\allowbreak}{\allowbreak 17,16,3\allowbreak}{\allowbreak 3,2,11\allowbreak}{\allowbreak 11,\va{19},\allowbreak\va{16}\allowbreak}{\allowbreak \va{16},\va{15},20,17\allowbreak } which indicate $ (\va{17},\va{18})\allowbreak=(18,12) $, $ (\va{13},\va{14})=(13,11) $, $ (\va{15},\va{16})=(7,6) $, $ \va{19}=5 $. Now we compute following link subsequently: \klk{\allowbreak6\allowbreak}{\allowbreak 7,8,1\allowbreak}{\allowbreak 1,4,5\allowbreak}{\allowbreak 5,\allowbreak 11,12\allowbreak}{\allowbreak 12,17,20,7}, \klk{\allowbreak7\allowbreak}{\allowbreak 6,1,8\allowbreak}{\allowbreak 8,14,19\allowbreak}{\allowbreak 19,18,\allowbreak 20\allowbreak}{\allowbreak 20,17,12,6}, \klk{\allowbreak11\allowbreak}{\allowbreak 5,6,12\allowbreak}{\allowbreak 12,3,2\allowbreak}{\allowbreak 2,\allowbreak 10,13\allowbreak}{\allowbreak 13,15,14,5}, \klk{\allowbreak13\allowbreak}{\allowbreak 11,2,10\allowbreak}{\allowbreak \allowbreak 10,16,18\allowbreak}{\allowbreak 18,20,15\allowbreak}{\allowbreak 15,14,5,11},  \klk{\allowbreak18\allowbreak}{\allowbreak 16,10,13\allowbreak}{\allowbreak 13,15,20\allowbreak}{\allowbreak 20,7,19\allowbreak}{\allowbreak 19,4,3,16}, \klk{\allowbreak\allowbreak 19\allowbreak}{\allowbreak 18,20,7\allowbreak}{\allowbreak 7,8,14\allowbreak}{\allowbreak 14,5,4\allowbreak}{\allowbreak \allowbreak 4,3,16,18} and \klk{\allowbreak20\allowbreak}{\allowbreak 7,19,18\allowbreak}{\allowbreak 18,13,15\allowbreak}{\allowbreak 15,9,\allowbreak 17\allowbreak}{\allowbreak 17,12,6,7}. Let this be the semi-equivelar map ${ \mathcal{K}_2} $. $ (\va{8},\va{10})=(18,17) $ implies \klk{\allowbreak16\allowbreak}{\allowbreak 17,9,10\allowbreak}{\allowbreak 10,13,\allowbreak 18\allowbreak}{\allowbreak 18,12,3\allowbreak}{\allowbreak \allowbreak 3,4,\va{11},17} which indicate $ \va{11}=19 $, $ \va{12}=14 $. Now \klk{\allowbreak19\allowbreak}{\allowbreak 4,5,14\allowbreak}{\allowbreak 14,8,7\allowbreak}{\allowbreak \allowbreak 7,20,17\allowbreak}{\allowbreak 17,6,3,4}, \klk{\allowbreak17\allowbreak}{\allowbreak 16,10,9\allowbreak}{\allowbreak 9,15,20\allowbreak}{\allowbreak 20,7,19\allowbreak}{\allowbreak 19,4,3,16}, \klk{\allowbreak7\allowbreak}{\allowbreak \allowbreak 6,1,8\allowbreak}{\allowbreak \allowbreak 8,14,19\allowbreak}{\allowbreak 19,17,20\allowbreak}{\allowbreak 20,\va{13},\va{14},6} which conclude three \vr of the remaining \5 are 5,14,15, assume $ [5,14,15,\vb{1},\vb{2}] $ be the face. Then \klk{\allowbreak20\allowbreak}{\allowbreak 7,19,17\allowbreak}{\allowbreak 17,9,15\allowbreak \allowbreak}{\allowbreak 15,\vb{1},\va{13}\allowbreak}{\allowbreak \va{13},\va{14},6,\allowbreak 7}, \klk{\allowbreak14\allowbreak}{\allowbreak 15,9,8\allowbreak}{\allowbreak 8,7,19\allowbreak}{\allowbreak \allowbreak 19,4,5\allowbreak}{\allowbreak 5,\vb{2},\vb{1},15}, \klk{\allowbreak15\allowbreak}{\allowbreak 14,8,9\allowbreak}{\allowbreak 9,17,20\allowbreak}{\allowbreak 20,\va{13},\allowbreak \vb{1}\allowbreak}{\allowbreak \vb{1},\vb{2},5,14}, \klk{\allowbreak5\allowbreak}{\allowbreak 14,19,4\allowbreak}{\allowbreak 4,1,6\allowbreak}{\allowbreak 6,\va{14},\vb{2}\allowbreak}{\allowbreak \vb{2},\allowbreak \vb{1},15,14} and \klk{\allowbreak6\allowbreak}{\allowbreak 7,8,1\allowbreak}{\allowbreak 1,\allowbreak 4,5\allowbreak}{\allowbreak 5,\vb{2},\va{14}\allowbreak}{\allowbreak \va{14},\va{13},20,7} which indicate $ \{\vb{1},\vb{2},\allowbreak \va{13},\va{14}\}=\{11,12,13,18\} $. $ \vb{1}=11 $ implies $ \{\vb{2},\va{13}\}=\{12,13\} $, $ \va{14}=18 $. But for $ ({\vb{2},\va{13}})=(12,13) $, semi-equivelar map is isomorphic to ${ \mathcal{K}_2} $ under the map $ (1\ 11\ 18\ 17\ 8\ 5\ 3\ 10\ 15\ 19)$ $(2\ 13\ 20\ \allowbreak  7\ 6\ 12\ 16\ 9\ 14\ 4) $. If $ (\vb{2},\va{13})=(13,12) $ then the semi-equivelar map is isomorphic to the semi-equivelar map ${ \mathcal{K}_1} $ under the map $ (1\ 10)$ $(3\ 11)$ $(4\ 13)$ $(5\ 18)(6\ 16)$ $(7\ 17)$ $(8\ 9)$ $(14\ 15)$ $(19\ 20) $. $ \vb{1}=12 $ implies $ \va{14}=13 $ and $ \{\vb{2},\va{13}\}=\{11,18\} $. For $ (\vb{2},\va{13})=(11,18) $, the semi-equivelar map is isomorphic to the semi-equivelar map ${ \mathcal{K}_2} $ under the map $ (1\ 14\ 6\ 19)$ $(2\ 15\ 12\ 17)$ $(3\ 9\ 11\ 20)$ $(4\ 8\ 5\ 7)$ $(10\ 13\ 18\ 16) $. If $ (\vb{2},\va{13})=(18,11) $ then the semi-equivelar map is isomorphic to the semi-equivelar map ${ \mathcal{K}_1} $ under the map $ (1\ 8\ 9\ 10\ 2)$ $(3\ 6\ 14\ 17\ 13)$ $(4\ 7\ 15\ 16\ 11)$ $(5\ 19\ 20\ \allowbreak  18\ 12) $. $ \vb{1}=13 $ implies $ \va{14}=12 $ and $ \{\vb{2},\va{13}\}=\{11,18\} $. For $ (\vb{2},\va{13})=(11,18) $, \klk{\allowbreak11\allowbreak}{\allowbreak 5,6,12\allowbreak}{\allowbreak 12,3,2\allowbreak}{\allowbreak 2,10,13\allowbreak}{\allowbreak \allowbreak 13,15,14,5}, \klk{\allowbreak12\allowbreak}{\allowbreak 6,5,11\allowbreak}{\allowbreak 11,2,3\allowbreak}{\allowbreak 3,16,18\allowbreak}{\allowbreak 18,20,7,6}, \klk{\allowbreak13\allowbreak}{\allowbreak 15,20,11\allowbreak \allowbreak}{\allowbreak 11,2,\allowbreak 10\allowbreak}{\allowbreak 10,16,18\allowbreak}{\allowbreak 18,5,14,15} and \klk{\allowbreak18\allowbreak}{\allowbreak 13,10,16\allowbreak}{\allowbreak 16,3,12\allowbreak}{\allowbreak \allowbreak 12,6,5\allowbreak}{\allowbreak 5,14,\allowbreak 15,13}. Let this be the semi-equivelar map ${ \mathcal{K}_3} $. For $ (\vb{2},\va{13})=(18,11) $, then semi-equivelar map will isomorphic to the semi-equivelar map ${ \mathcal{K}_2} $ under the map $ (1\ 8)$ $(2\ 9)$ $(3\ 15)$ $(4\ 14)$ $(5\ 19)$ $(6\ 7)$ $(11\ 17)$ $(12\ 20)\allowbreak (13\ 16) $. $ \vb{1}=18 $ indicate $ \va{14}=11 $ and $ \{\vb{2},\va{13}\}=\{12,13\} $. Under the map $ (1\ 2\ 10\ 9\ 8)$ $(3\ 13\ \allowbreak  17\ 14\ 6)$ $(4\ 11\ 16\ 15\ 7)$ $(5\ 12\ \allowbreak  18\ 20\ 19) $, the semi-equivelar map for $ (\vb{2},\va{13})=(12,13) $ is isomorphic to the semi-equivelar map ${ \mathcal{K}_3} $ and for $ (\vb{2},\va{13})=(13,12) $, is isomorphic to the semi-equivelar map ${ \mathcal{K}_2}$.}

\textnormal{$\va{9}=17 $ indicate $ \{\va{8},\va{10}\}=\{16,20\} $. For $ (\va{8},\va{10})=(16,20) $, This semi-equivelar map is isomorphic to the semi-equivelar map ${ \mathcal{K}_2} $ under the map $ (1\ 5)$ $(2\ 14)$ $(3\ 19)$ $(7\ 12)$ $(8\ 11)$ $(9\ 13)$ $(10\ 15)$ $(16\ 20)$ $(17\ 18) $. For $ (\va{8},\va{10})=(20,16) $, the semi-equivelar map is isomorphic to the semi-equivelar map ${ \mathcal{K}_1} $ under the map $ (1\ 9\ 2\ 8\ 10)$ $(3\ 14\ \allowbreak  13\ 6\ 17)$ $(4\ 15\ 11\ 7\ 16)\allowbreak (5\ 20\ 12\ 19\ 18) $.}			

\textnormal{$\va{9}=18 $ indicates $ \va{14}\in\{14,15,19,20\} $. For any {$ \va{14} $}, semi-equivelar maps will isomorphic to one of ${ \mathcal{K}_1} $, ${ \mathcal{K}_2} $ and ${ \mathcal{K}_3} $ under one of the map of $ (1\ 18\ 6\ 13\ 5\ 15\ 12\ 8\ 16)$ $(2\ 19\ 9\ 3\ 7\ 10\ 4\ 20\ 11\ 14\ 17) $, $ (1\ 11\ 4\ 12)$ $(2\ 5\ 3\ 6)$ $(7\ 10\ 14\ 18)$ $(8\ 13\ 19\ 16)$ $(9\ 15\ 20\ 17) $, $ (1\ 16\ 5\ 9\ 19\ 15\ 7\ 13)$ $(2\ 3\ 12\ 11)$ $(4\ 17\ \allowbreak 14\ 20\ 8\ 18\ 6\ 10) $, $ (1\ 10)$ $(3\ 11)$ $(4\ 13)$ $(5\ 18)$ $(6\ 16)$ $(7\ 17)$ $(8\ 9)$ $(14\ 15)$ $(19\ 20) $, $ (1\ 3\ 19\ 6\ 2\ 14\ 5\ 11\ 8\ \allowbreak 4\ 12\ 7)$ $(9\ 18\ 17\ 13)$ $(10\ 15\ 16\ 20) $, $ (1\ 15\ 2\ 14\ 10\ 5\ 17\ 12\ 7\ 18)$ $(3\ 8\ 13\ 4\ 9\ 11\ 19\ 16\ 6\ 20) $, $ (1\ 2\ 10\ 9\ 8)$ $(3\ 13\ 17\ 14\ 6)$ $(4\ 11\ 16\ 15\ 7)$ $(5\ 12\ 18\ 20\ 19) $, $ (1\ 13\ 6\ 18)$ $(2\ 11\ 12\ 3)$ $(4\ 10\ 5\ 16)$ $(7\ 20)$ $(8\ \allowbreak 15)$ $(9\ 14)$ $(17\ 19) $, $ (1\ 2)$ $(3\ 4)$ $(5\ 12)$ $(6\ 11)$ $(7\ 13)$ $(8\ 10)$ $(14\ 16)$ $(15\ 17)$ $(18\ 19) $, $ (1\ 7)$ $(2\ 20\ 3\ 17)$ $(4\ \allowbreak 19)$ $(5\ 14)$ $(6\ 8)$ $(9\ 11\ 15\ 12)$ $(10\ 13\ 18\ 16) $, $ (1\ 14\ 20\ 16\ 2\ 5\ 7\ 9\ 13\ 12)$ $(3\ 4\ 19\ 17\ 10\ 11\ 6\ 8\ 15\ 18) $, $ (1\ 8\ 9\ 10\ 2)$ $(3\ 6\ 14\ 17\ 13)$ $(4\ 7\ 15\ 16\ 11)$ $(5\ 19\ 20\ 18\ 12) $, $ (2\ 8)$ $(3\ 7)$ $(4\ 6)$ $(9\ 10)$ $(11\ 14)$ $(12\ 19)$ $(13\ \allowbreak 15)$ $(16\ 17)$ $(18\ 20) $, $ (1\ 11\ 4\ 12)$ $(2\ 5\ 3\ 6)$ $(7\ 10\ 14\ 18)$ $(8\ 13\ 19\ 16)$ $(9\ 15\ 20\ 17) $, $ (1\ 7)$ $(2\ 20\ 3\ 17)$ $(4\ \allowbreak 19)$ $(5\ 14)$ $(6\ 8)$ $(9\ 11\ 15\ 12)$ $(10\ 13\ 18\ 16) $.}

\textnormal{For $\va{9}=19,20 $, the semi-equivelar map is isomorphic to the semi-equivelar map for $ \va{9}=18 $ under the map $ (1\ 2)$ $(3\ 4)$ $(5\ 12)$ $(6\ \allowbreak  11)$ $(7\ 13)$ $(8\ 10)$ $(14\ 16)$ $(15\ 17)$ $(18\ 19) $ and $ (1\ 2\ 10\ 9\ 8)$ $(3\ 13\ 17\ 14\ 6)$ $(4\ 11\ 16\ \allowbreak 15\ 7)$ $(5\ 12\ 18\ 20\ 19) $ respectively.}
\end{Subsubcase}
\begin{Subsubcase}\klk{3\allowbreak}{\allowbreak 11,12,2\allowbreak}{\allowbreak 2,1,4\allowbreak}{\allowbreak 4,\va{8},\va{9}\allowbreak}{\allowbreak \va{9},\va{10},\va{11},12}. \textnormal{Then the semi-equivelar map is isomorphic to ${ \mathcal{K}_2} $ under one of the map $ (1\ 10\ 8\ 2\ 9)$ $(3\ 17\ 6\ 13\ 14)$ $(4\ 16\ 7\ 11\ 15)$ $(5\ 18\ 19\ \allowbreak 12\ 20) $, $ (1\ 13\ 8\ 11\ 9\ 5\ 6\ 19\ 3\ 20)$ $(2\ 15\ 6\ 10\ 14\ 12\ 17\ 4\ 18\ 7) $, $ (1\ 9)$ $(2\ 10)$ $(3\ 16)$ $(4\ 17)$ $(5\ 20)$ $(6\ 15)\allowbreak (7\ 14)$ $(11\ 13)$ $(12\ \allowbreak  18) $, $ (1\ 15\ 4\ 20)$ $(2\ 13)$ $(3\ 18)$ $(5\ 17\ 6\ 9)$ $(7\ 8\ 14\ 19)$ $(10\ 11)$ $(12\ 16) $.}\end{Subsubcase}\end{Subcase}
\end{Case}\vspace{-7mm}
\end{proof}

\begin{lemma}\label{lemma2}$ Aut( \mathcal{K}_1)=\langle\alpha_1,\alpha_2\rangle\cong D_2 $, $ Aut( \mathcal{K}_2)=\langle\beta\rangle\cong\mathbb{Z}_2 $, $ Aut( \mathcal{K}_3)=\langle\gamma_1,\gamma_2\rangle\cong D_4 $, where $ \alpha_1=(1\ 3)$ $(2\ 4)$ $(5\ 11)$ $(6\ 12)$ $(7\ 19)$ $(8\ 14)$ $(9\ 15)$ $(13\ 16)$ $(10\ 18)$ $(17\ 20) $, $ \alpha_2=(1\ 6)$ $(2\ 11)$ $(3\ 12)$ $(4\ 5)$ $(7\ 8)$ $(9\ 20)$ $(10\ 13)$ $(14\ 19)$ $(15\ 17)$ $(16\ 18) $, $ \beta=(1\ 4)$ $(2\ 3)$ $(5\ 6)$ $(7\ 14)$ $(8\ 19)$ $(9\ 18)$ $(10\ 16)$ $(11\ 12)$ $(13\ 17)$ $(15\ 20) $, $ \gamma_1=(1\ 8)$ $(2\ 9)$ $(3\ 15)$ $(4\ 14)$ $(5\ 19)$ $(6\ 7)$ $(11\ 17)$ $(12\ 20)$ $(13\ 16) $, $ \gamma_2=(1\ 5)$ $(2\ 11)$ $(3\ 12)$ $(4\ 6)$ $(7\ 19)$ $(8\ 14)$ $(9\ 15)$ $(10\ 13)$ $(16\ 18)$ $(17\ 20) $.
\end{lemma}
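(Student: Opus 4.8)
I would prove Lemma~\ref{lemma2} in two stages: first that each permutation displayed in the statement is an automorphism of the corresponding map and that the three subgroups are abstractly $D_2$, $\mathbb{Z}_2$, $D_4$; then that these subgroups exhaust ${\rm Aut}(\mathcal{K}_1)$, ${\rm Aut}(\mathcal{K}_2)$, ${\rm Aut}(\mathcal{K}_3)$.

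\emph{Stage 1.} Each $\mathcal{K}_i$ is completely determined by the list of link-cycles $lk(v)$, $v\in\{1,\dots,20\}$, produced in the proof of Lemma~\ref{lemma1} (equivalently, by its lists of $4$-gon and $5$-gon faces). To check that a permutation $\sigma$ of $\{1,\dots,20\}$ is an automorphism of $\mathcal{K}_i$ one verifies that $\sigma$ carries every face to a face; since a polyhedral-map automorphism is determined by its restriction to the star of a single vertex and then propagates along the connected $1$-skeleton, it is enough to check that $\sigma(lk(v_0))$ agrees, up to the canonical reversal, with $lk(\sigma(v_0))$ for one vertex $v_0$ and for one of its neighbours, the rest following by the induction used throughout Lemma~\ref{lemma1}. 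For the abstract structure, each of $\alpha_1,\alpha_2,\beta,\gamma_1,\gamma_2$ is a product of disjoint transpositions, hence an involution; a direct computation shows $\alpha_1$ and $\alpha_2$ commute with $\alpha_1\alpha_2\neq\mathrm{id}$, so $\langle\alpha_1,\alpha_2\rangle\cong\mathbb{Z}_2\times\mathbb{Z}_2=D_2$; $\beta\neq\mathrm{id}$ gives $\langle\beta\rangle\cong\mathbb{Z}_2$; and $\gamma_1\gamma_2$ is an element of order $4$ (it contains the $4$-cycle $(1\ 19\ 6\ 14)$), so $\langle\gamma_1,\gamma_2\rangle\cong D_4$.

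\emph{Stage 2.} The key structural fact is that, $\mathcal{K}_i$ being semi-equivelar of type $[4^3,5^1]$, the face-cycle at every vertex contains exactly one pentagon. Hence any $\phi\in{\rm Aut}(\mathcal{K}_i)$ fixing a vertex $v$ must fix the pentagon at $v$, and therefore fixes the position of that pentagon in the rotation at $v$; the induced permutation of the four darts at $v$ consequently lies in the order-$2$ group generated by the reflection of the dart-cycle through the pentagon gap. If this induced permutation is trivial, then $\phi$ fixes a flag at $v$, and an automorphism of a connected polyhedral map fixing a flag is the identity. Thus $|{\rm Stab}_{{\rm Aut}(\mathcal{K}_i)}(v)|\le 2$ for every $v$, each $\phi$ is determined by $\phi(v)$ together with a local orientation at $v$, and $|{\rm Aut}(\mathcal{K}_i)|\le 2\cdot 20=40$.

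It then remains to run, from a fixed base vertex (say vertex $1$), the propagation procedure of Lemma~\ref{lemma1} for each candidate root map $1\mapsto w$ together with each of the two local orientations, recording which of these (at most $40$) cases close up to a consistent labelling of all $20$ vertices; Claim~\ref{claim1} prunes the neighbour possibilities at each step exactly as before. This enumeration returns $|{\rm Aut}(\mathcal{K}_1)|=4$, $|{\rm Aut}(\mathcal{K}_2)|=2$, $|{\rm Aut}(\mathcal{K}_3)|=8$; since the subgroups exhibited in Stage~1 already have these orders, the stated equalities follow, and the isomorphism types $D_2$, $\mathbb{Z}_2$, $D_4$ are read off from Stage~1. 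The one genuinely laborious step — and the main obstacle — is precisely this finite enumeration: verifying that no candidate root map outside the listed subgroups propagates to a consistent labelling of $\mathcal{K}_i$.
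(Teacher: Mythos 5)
Your overall strategy is sound and genuinely different from the paper's. The paper does not use your flag-stabilizer bound at all; instead it introduces the invariant graphs $G_i(K)$ (with $uv$ an edge iff $|N(u)\cap N(v)|=i$) and observes that any automorphism of $\mathcal{K}_i$ must preserve $G_6(\mathcal{K}_i)$. Since $EG(G_6(\mathcal{K}_1))$ touches only the vertices $\{1,2,3,4,5,6,11,12\}$ and $EG(G_6(\mathcal{K}_2))$ forces $\{\phi(10),\phi(16)\}=\{10,16\}$, this immediately cuts the candidate images of a base vertex to $8$, $2$ and (for $\mathcal{K}_3$) a similarly short list; for each candidate the links then determine $\phi$ uniquely, and the paper finishes by exhibiting a face that the non-automorphism candidates fail to preserve. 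Your route replaces this with the structural observation that the unique pentagon in each face-cycle forces the vertex stabilizer to have order at most $2$, hence $|{\rm Aut}(\mathcal{K}_i)|\le 40$, followed by a brute-force propagation over all $\le 40$ root maps per complex. That bound is correct and arguably cleaner conceptually, but it buys you a case analysis roughly five to twenty times larger than the paper's, and — this is the substantive shortfall — you do not carry it out: the sentence ``this enumeration returns $|{\rm Aut}(\mathcal{K}_1)|=4$, $|{\rm Aut}(\mathcal{K}_2)|=2$, $|{\rm Aut}(\mathcal{K}_3)|=8$'' is precisely the content of the lemma and cannot be asserted without doing the work (the paper does do its smaller version of it, including the explicit non-preserved faces for the rejected candidates). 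A secondary caution: in Stage 1, checking that $\sigma(lk(v_0))$ matches $lk(\sigma(v_0))$ at one vertex and one neighbour shows only that $\sigma$ agrees with the unique \emph{potential} automorphism determined by that local data; to certify that $\sigma$ itself is an automorphism you must still verify face-preservation globally (all $20$ links, or all $19$ faces), as the paper implicitly does. If you adopt the paper's $G_6$ pruning inside your Stage 2, the enumeration becomes short enough to write down, and the two arguments essentially merge.
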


\begin{proof} For a polyhedral complex $ K $ we define the graph $ G_{i}(K) $ (introduced in \cite{datta2006}) as follows: $ V(G_{i}(K))=V(K) $ and $ [u,v]\in EG(G_{i}(K))  $ if $ |N(u)\cap N(v)|=i $, where $ u,v\in V(K) $ and $ N(v)=\{u\in V(K): u\in V(lk(v)) \} $. Then ${ EG(G_6( \mathcal{K}_1))} =\{[1,11],$ $[2,6],$ $[3,5],$ $[4,12]\} $, ${ EG(G_6( \mathcal{K}_2))}=\{[2,16],$ $[3,10],$ $[9,13],$ $[10,15],$ $[16,20],$ $[17,18]\} $, ${EG(G_6( \mathcal{K}_3))}=\{[2,16],$ $[3,10],$ $[9,13],$ $[10,15],$ $[11,18],$ $[12,13],$ $[16,20],$ $[17,18]\} $. If $ f:K\rightarrow K $ is an automorphism map of $ K $, then $ f $ is automorphism of $ G_i(K) $ as well. For this proof we denote $ lk_K(v) $ as $ lk(v) $ of semi-equivelar map $ K $. It is clear that $ \alpha_1,\alpha_2 $ are automorphism {maps} of $  \mathcal{K}_1 $, $ \beta $ is an automorphism map of $  \mathcal{K}_2 $ and $ \gamma_1,\gamma_2 $ are automorphism {maps} of $  \mathcal{K}_3 $.
	
	Let $ \phi: \mathcal{K}_1\rightarrow  \mathcal{K}_1 $ be an automorphism map of $  \mathcal{K}_1 $, then from {$ EG(G_6(\mathcal{K}_1)) $, we have} $ \{\phi(1),\phi(2),\phi(3),\phi(4),\phi(5),\phi(6),$ $\phi(11),\phi(12)\}=\{1,2,3,4,5,6,11,12\} $. If $ \phi(1)=1 $, then $ \phi=Identity $ which clearly is an automorphism map. If $ \phi(1)=2 $ then from ${EG(G_6( \mathcal{K}_1))} $, $ lk_{ \mathcal{K}_1}(1) $, $ lk_{ \mathcal{K}_1}(2)$, $lk_{ \mathcal{K}_1}(5) $, $lk_{ \mathcal{K}_1}(12) $, $ lk_{ \mathcal{K}_1}(9)$, we conclude $ \phi=(1\ 2)$ $(3\ 4)$ $(5\ 12)$ $(6\ 11)$ {$(7\ 13)$} $(8\ 10)$ $(14\ 16)$ $(15\ 17)$ $(18\ 19) $, but $\phi([5,12,19,17,16])=[12,5,18,15,14] $. $ [5,12,19,17,16] $ is a face but $[12,5,18,15,14]$ is not a face, therefore it is not automorphism map. If $ \phi(1)=3 $ then from ${EG(G_6( \mathcal{K}_1))} $ and link of $  \mathcal{K}_1 $, we get $ \phi=\alpha_1 $. If $ \phi(1)=4 $, then from ${EG( G_6( \mathcal{K}_1))} $, $ lk_{ \mathcal{K}_1}(1) $, $ lk_{ \mathcal{K}_1}(4) $, $ lk_{ \mathcal{K}_1}(7) $, $ lk_{ \mathcal{K}_1}(11) $, $ lk_{ \mathcal{K}_1}(12) $, $ lk_{ \mathcal{K}_1}(16) $, we get $ \phi=(1\ 4)$ $(2\ 3)$ $(5\ 6)$ $(7\ 16)$ $(8\ 18)$ $(9\ 15\ 20\ 17)$ $(10\ 14\ 13\ 19)$ $(11\ 12) $, but $ \phi([13,18,15,20])=[19,8,20,17] $. $ [13,18,15,20] $ is a face but $ [19,8,20,17] $ is not a face i.e. it is not an automorphism. Similarly if $ \phi(1)=5 $ then $ \phi=(1\ 5)$ $(3\ 11)$ $(2\ 12)$ $(4\ 6)$ $(7\ 18)$ $(8\ 16)$ $(9\ 17)$ $(10\ 19)$ $(13\ 14)$ $(15\ 20)=\phi_1  $ (say), $ \phi(1)=11 $ implies $ \phi=(1\ 11)$ $(2\ 6)$ $(3\ 5)$ $(4\ 12)$ $(7\ 10)$ $(8\ 13)$ $(9\ 20)$ $(14\ 18)$ $(16\ 19)=\phi_2 $ (say) and $ \phi(1)=6 $ implies $ \phi=\alpha_2 $, $ \phi(1)=12 $ implies $ \phi=\alpha_1\circ\alpha_2 $. Now $ \phi_1([9,10,16,17])=[17,19,8,9] $ and $ \phi_2([5, 12, 19, 17, 16])=[3, 4, 16, 17, 19] $, however $ [9,10,16,17] $, $ [5, 12, 19, 17, 16] $ are faces, but $ [17,19,8,9] $, $ [3, 4, 16, 17, 19] $ are not faces, which indicate $ \phi_1 $, $ \phi_2 $ are not automorphism map. Therefore $ Aut( \mathcal{K}_1)=\langle\alpha_1,\alpha_2\rangle $ and by GAP\cite{GAP4}, we have $ \langle\alpha_1,\alpha_2\rangle\cong D_2 $ (Dihedral Group of order 4).
	
	Let $ \phi: \mathcal{K}_2\rightarrow  \mathcal{K}_2 $ be an automorphism map of $  \mathcal{K}_2 $. Then from ${EG( G_6( \mathcal{K}_2))} $ we have $ \{\phi(10),\phi(16)\}=\{10,16\} $. From link of $  \mathcal{K}_2 $ and ${EG( G_6( \mathcal{K}_2))} $, we obtain: for $ \phi(10)=10 $, $ \phi=Identity $ and for $ \phi(10)=16 $, $ \phi=\beta $. Therefore $ Aut( \mathcal{K}_2)=\langle\beta\rangle\cong \mathbb{Z}_2 $.
	
	We have $ \gamma_1 $, $ \gamma_2 $ are automorphism map of $  \mathcal{K}_3 $. Therefore $ \gamma_1\circ\gamma_2=(1\ 19\ 6\ 14)$ $(2\ 17\ 12\ 15)$ $(3\ 20\ 11\ 9)$ $(4\ 7\ 5\ 8)$ $(10\ 16)$ $(18\ 13) $, $ (\gamma_1\circ\gamma_2)^2=(1\ 6)$ $(2\ 12)$ $(3\ 11)$ $(4\ 5)$ $(7\ 8)$ $(9\ 20)$ $(10\ 18)$ $(13\ 16)$ $(14\ 19)$ $(15\ 17) $, $ (\gamma_1\circ\gamma_2)^3=(1\ 14\ 6\ 19)$ $(2\ 15\ 12\ 17)$ $(3\ 9\ 11\ 20)$ $(4\ 8\ 5\ 7)$ $(10\ 13\ 18\ 16) $, $ \gamma_2\circ\gamma_1\circ\gamma_2=(1\ 7)$ $(2\ 20)$ $(3\ 17)$ $(4\ 19)$ $(5\ 14)$ $(6\ 8)$ $(9\ 12)$ $(10\ 18)$ $(11\ 15) $, $ \gamma_1\circ\gamma_2\circ\gamma_1=(1\ 4)$ $(2\ 3)$ $(5\ 6)$ $(7\ 14)$ $(8\ 19)$ $(9\ 17)$ $(10\ 16)$ $(11\ 12)$ $(13\ 18)$ $(15\ 20) $ are automorphism maps as composition of two automorphism map is an automorphism map. From ${ EG(G_6( \mathcal{K}_3 ))} $ we have these are all possible maps, therefore $ Aut( \mathcal{K}_3 ) =\langle\gamma_1,\gamma_2\rangle $ and by GAP\cite{GAP4}, we get $ \langle\gamma_1,\gamma_2\rangle \cong D_4$ (Dihedral Group of order 8).
\end{proof}

\begin{lemma}\label{lem7} If type is  $[6^2, 7^1], [3^1, 4^1, 7^1, 4^1],$ $[4^1, 6^1, 14^1]$ or $[4^1, 8^1, 10^1]$ then there exists a semi-equivelar map on the surface of Euler char.  $-1 $.
\end{lemma}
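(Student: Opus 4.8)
The plan is to prove existence constructively: for each of the four types I exhibit a concrete polyhedral map on the surface of Euler characteristic $-1$ and then check that its vertex links are of the prescribed type. The required maps are exactly the ones already displayed in Example~\ref{eg:8maps-torus}, namely $\mathcal{K}_5$ of type $[6^2,7^1]$, $\mathcal{K}_4$ of type $[3^1,4^1,7^1,4^1]$, $\mathcal{K}_6$ of type $[4^1,6^1,14^1]$, and $\mathcal{K}_7$ of type $[4^1,8^1,10^1]$; by Lemma~\ref{lem1} such maps on this surface must have $42$, $42$, $84$ and $40$ vertices respectively, and the figures realize them as identification patterns on a fundamental polygon tiled by the appropriate faces. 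So the first step is to extract from each picture the complete face list together with the boundary identifications.

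Given that data, I would perform two checks. First, for every vertex $v$, list the faces through $v$ in cyclic order around $v$ and confirm that this face-cycle is $[6^2,7^1]$ (resp.\ $[3^1,4^1,7^1,4^1]$, $[4^1,6^1,14^1]$, $[4^1,8^1,10^1]$) --- this is precisely what makes the map semi-equivelar of the claimed type. Second, verify that the complex is a \emph{polyhedral} map: the underlying graph is simple and any two distinct faces meet in $\emptyset$, a vertex, or an edge --- equivalently, in the link-cycle language of Section~\ref{sec:proofs-1}, the paths $P_i$ of each link are pairwise disjoint. This second check is the only delicate point, since along the boundary of the fundamental polygon two copies of a face could a priori be glued so as to overlap in more than an edge.

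Finally I would compute $\chi$ from the cell counts, which also re-confirms the vertex numbers from Lemma~\ref{lem1}. For $[6^2,7^1]$ every vertex has degree $3$, so $f_0=42$, $f_1=63$, $f_2=14+6=20$, giving $\chi=42-63+20=-1$; for $[3^1,4^1,7^1,4^1]$ the degree is $4$, so $f_0=42$, $f_1=84$, $f_2=14+21+6=41$, $\chi=-1$; for $[4^1,6^1,14^1]$, $f_0=84$, $f_1=126$, $f_2=21+14+6=41$, $\chi=-1$; for $[4^1,8^1,10^1]$, $f_0=40$, $f_1=60$, $f_2=10+5+4=19$, $\chi=-1$. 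Each $\mathcal{K}_i$ is connected, so it is a semi-equivelar map of the desired type on the surface of Euler characteristic $-1$, which proves the lemma. The main obstacle is bookkeeping rather than idea: for the larger maps, above all $\mathcal{K}_6$ (with $84$ vertices, $21$ squares, $14$ hexagons and $6$ $14$-gons) and $\mathcal{K}_5$, checking polyhedrality across all boundary identifications is laborious and error-prone, and the safeguard is to compute each vertex link independently and verify that it is a single cycle of the correct type with no repeated vertex. I would also note that a truncation/rectification shortcut in the spirit of Propositions~\ref{lem4} and~\ref{lem5} is not available for these types --- for instance the would-be parent of $[6^2,7^1]$ under truncation is a $[3^7]$ map, which cannot occur on this surface --- so a direct construction is genuinely needed.
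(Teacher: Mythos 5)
Your proposal is correct and takes essentially the same route as the paper: the paper's entire proof of this lemma is the one-line observation that the required maps are the explicit examples $\mathcal{K}_4,\mathcal{K}_5,\mathcal{K}_6,\mathcal{K}_7$ of Example~\ref{eg:8maps-torus}. Your additional verification plan (link checks, polyhedrality, and the Euler characteristic computations, all of which are arithmetically correct) only makes explicit what the paper leaves implicit in presenting those figures.
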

\begin{proof}
The proof follows from Example \ref{eg:8maps-torus}.
\end{proof}

\begin{lemma}\label{lem8}
	Let  $X $ be a semi-equivelar map of type  $[p_1^{n_1}, \dots, p_k^{n_k}] $ on the surface of Euler char.  $-1 $. Then,  $[p_1^{n_1}, \dots, p_k^{n_k}] \neq [3^4, 7^1]$.
\end{lemma}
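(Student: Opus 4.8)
By Lemma~\ref{lem1}, it is enough to rule out the existence of a semi-equivelar map $X$ of type $[3^4,7^1]$ with $n=42$ vertices, $|F_3(X)|=56$ triangular faces and $|F_7(X)|=6$ heptagonal faces, every vertex having degree $5$.

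\smallskip
\textbf{Step 1: the forced global combinatorics.} Since the face-cycle of each vertex contains exactly one $7$-gon (the block $7^1$), no vertex can lie on two heptagons; hence the six heptagons are pairwise vertex-disjoint and their vertex-sets partition $V(X)$ into six blocks of size $7$. As $X$ is polyhedral, a heptagon has no chord, so each vertex $v$ of a heptagon $H$ has exactly two neighbours on $H$ and three outside it, and the link-cycle of $v$ has the shape $p\text{-}y_1\text{-}y_2\text{-}y_3\text{-}q\text{-}(\text{the other five vertices of }H)\text{-}p$, with $p,q$ the $H$-neighbours of $v$ and $y_1,y_2,y_3$ outside $H$; the four triangles at $v$ are $vpy_1,\,vy_1y_2,\,vy_2y_3,\,vy_3q$, and $vpy_1,\,vy_3q$ are the two ``ear'' triangles glued to $H$ along the edges $vp$, $vq$. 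Each heptagon-edge carries exactly one ear, so there are exactly $42$ ear triangles and hence exactly $14$ remaining triangles, each having its three vertices in three distinct heptagons (call these ``through'' triangles). A short counting argument shows that no vertex can be the apex of two ears — two such ears would force the vertex to have at least four neighbours outside its own heptagon, or would produce a non-simple link — and since $\sum_v(\#\{\text{ears with apex }v\})=42=|V(X)|$, every vertex is the apex of exactly one ear; hence every vertex lies in exactly three ears (two as a base-vertex, one as an apex) and in exactly one through triangle, so the $14$ through triangles partition $V(X)$ into $14$ triples, each meeting three distinct heptagons. Finally, counting edge–face incidences on the edges joining a fixed pair of heptagons shows that the number of through triangles straddling that pair is even, and around each heptagon these numbers total $14$.

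\smallskip
\textbf{Step 2: forced completion.} Fix a heptagon $H=[1,2,3,4,5,6,7]$ and an orientation, and set up the $C_9$-link notation for type $[3^4,7^1]$ (one bracket for the heptagon, analogous to the notation used in the proof of Lemma~\ref{lemma1}). Reading off $lk(1),lk(2),\dots$ one obtains the seven ear-apices $z_1,\dots,z_7$ (pairwise distinct, all outside $H$) and the seven ``middle'' outside neighbours $x^{(1)},\dots,x^{(7)}$ of the vertices of $H$, together with the forced triangles $w_iz_{i-1}x^{(i)}$ and $w_ix^{(i)}z_i$ at $w_i$, exactly one of which is the through triangle at $w_i$. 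Since only $35$ vertices remain and they must organise into five further heptagons each abutting $H$ in exactly this pattern, the possibilities for the ``second layer'' are few; I would branch on the coincidence pattern among the $z_i$ and $x^{(i)}$ and on which of the two candidate triangles at each $w_i$ is the through triangle (subject to the parity and degree constraints from Step~1, i.e.\ to how the neighbouring heptagons can be placed around $H$), and in each branch push the forced links of the newly created vertices one further layer outward. In every branch one is driven to a contradiction with a standing hypothesis: a seventh heptagon appears, or two heptagons are forced to share a vertex, or some link acquires a repeated vertex, or the $42$ available vertices are exhausted before all links close up. This shows $[3^4,7^1]$ cannot be the type of $X$.

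\smallskip
\textbf{Main obstacle.} The crux is this last case analysis. Because $7$ is odd the heptagon carries no reflective symmetry that would halve the casework, and the six heptagons together with the $14$ through-triangle triples must be fitted onto only $42$ vertices with no over-identification; verifying in each branch that this cannot be done is the substance of the proof. The bookkeeping of Step~1 — six disjoint heptagons, $42$ ears, $14$ through triangles partitioning the vertex set, and the even pairwise through-triangle counts — is precisely what keeps the number of branches tractable.
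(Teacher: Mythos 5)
You have correctly identified the global structure, and your Step 1 is essentially sound --- indeed more systematic than the setup the paper itself records: the six heptagons are pairwise vertex-disjoint and partition the $42$ vertices; each heptagon edge carries exactly one ``ear'' triangle (giving $42$ ears and $14$ remaining ``through'' triangles); no vertex is the apex of two ears (your stated justification is glossed, but the correct reason is near what you say: if $v$ were the apex of ears on consecutive edges $y_1y_2$, $y_2y_3$ of another heptagon, then in the link-cycle of $y_2$ the vertex $v$ would have to occupy both the position adjacent to $y_1$ and the position adjacent to $y_3$, making that link non-simple); hence the through triangles partition $V(X)$ into $14$ triples meeting three distinct heptagons each, with the pairwise counts even and summing to $14$ around each heptagon. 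All of this checks out.

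The gap is that Step 2 is not a proof but a plan for one. The lemma is precisely the assertion that the finite case analysis you outline ends in a contradiction in every branch, and you do not exhibit a single branch: ``I would branch on the coincidence pattern \dots and in each branch push the forced links one further layer outward. In every branch one is driven to a contradiction'' states the desired conclusion without deriving it. Nothing in Steps 1--2 excludes the possibility that some arrangement of the five heptagons abutting a fixed heptagon, together with a consistent assignment of through triangles, closes up into a valid map; ruling that out is, as you yourself say, ``the substance of the proof,'' and it is absent. For comparison, the paper's argument fixes a concrete development of the six heptagons (its Figure 3), observes that one of the three edges $[a_1,a_2]$, $[a_3,a_4]$, $[a_5,a_6]$ must be adjacent to $F_{7,6}$, and explicitly disposes of each resulting case; your Step 1 invariants would organize and shorten such an enumeration, but they cannot replace it.
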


\begin{proof}
Let $X$ be a map of type $[3^4,7^1]$ on the surface of \Echar{-1}. Then, by Euler equation, the number of vertices $|V(X)|=42$. 
\begin{figure}[H]
	\begin{center}
		\psscalebox{0.9 0.9} 
		{
			\begin{pspicture}(0,-6.6499205)(8.360267,1.1703591)
			\pspolygon[linecolor=black, linewidth=0.02](3.9802167,-3.8698602)(2.9602168,-4.42986)(2.8802166,-5.36986)(3.4602168,-6.04986)(4.4202166,-6.04986)(5.0402164,-5.38986)(4.9002166,-4.44986)
			\pspolygon[linecolor=black, linewidth=0.02](4.3002167,-3.3698602)(4.3402166,-2.2498603)(5.4402165,-1.7298602)(6.3202167,-2.3298602)(6.3802166,-3.1698601)(6.0002165,-3.7698603)(5.2402167,-3.9898603)
			\pspolygon[linecolor=black, linewidth=0.02](5.7202168,-1.1698602)(5.7602167,-0.049860228)(6.600217,0.49013978)(7.5002165,0.17013977)(7.8402166,-0.60986024)(7.5202165,-1.4298602)(6.580217,-1.7498603)
			\pspolygon[linecolor=black, linewidth=0.02](5.160217,0.010139771)(5.100217,-1.1498603)(4.0202165,-1.6898602)(3.2202168,-1.1698602)(3.2602167,0.09013977)(3.8202167,0.51013976)(4.580217,0.49013978)
			\pspolygon[linecolor=black, linewidth=0.02](2.6602166,0.09013977)(2.6002166,-1.1298603)(1.5402167,-1.6098602)(0.7802167,-1.1498603)(0.58021665,-0.36986023)(1.0202167,0.43013978)(1.9402167,0.6101398)
			\pspolygon[linecolor=black, linewidth=0.02](1.7602167,-2.1698601)(2.9002166,-1.6498603)(3.7202168,-2.2098603)(3.7202168,-3.3898602)(2.7002168,-3.8698602)(1.9802166,-3.6098602)(1.6402167,-2.9698603)
			\pspolygon[linecolor=black, linewidth=0.02](3.7002168,-3.3698602)(3.9802167,-3.8698602)(4.3202167,-3.3698602)
			\pspolygon[linecolor=black, linewidth=0.02](4.0202165,-1.6698602)(3.7202168,-2.2098603)(4.3402166,-2.2498603)
			\pspolygon[linecolor=black, linewidth=0.02](5.100217,-1.1298603)(5.7402167,-1.2098602)(5.4002166,-1.7298602)
			\pspolygon[linecolor=black, linewidth=0.02](2.5802166,-1.1098602)(3.2202168,-1.1898602)(2.9002166,-1.6498603)
			\psline[linecolor=black, linewidth=0.02](2.7202168,-3.8498602)(2.4002166,-4.36986)(2.3002167,-5.2098603)(2.4602168,-5.7898602)(2.9002166,-6.2898602)(3.5402167,-6.6098604)(4.3402166,-6.6298604)(5.0002165,-6.2898602)(5.3602166,-5.84986)(5.600217,-5.2298603)(5.5002165,-4.46986)(5.2602167,-3.9898603)
			\psline[linecolor=black, linewidth=0.02](5.5002165,-4.42986)(6.0402164,-4.30986)(6.4802165,-3.92986)(6.7802167,-3.5498602)(6.9402165,-2.8898602)(6.9002166,-2.3498602)(6.580217,-1.7498603)
			\psline[linecolor=black, linewidth=0.02](6.8802166,-2.3298602)(7.640217,-2.0498602)(8.120216,-1.5498602)(8.340217,-0.92986023)(8.320217,-0.22986023)(8.0002165,0.33013976)(7.5402164,0.7501398)(6.8202167,0.9901398)(6.180217,0.89013976)(5.5002165,0.45013976)(5.180217,-0.00986023)
			\psline[linecolor=black, linewidth=0.02](5.7602167,-0.049860228)(5.4802165,0.47013977)(5.0202165,0.8501398)(4.4802165,1.0301398)(3.9202166,1.0301398)(3.3802166,0.8701398)(2.9802167,0.53013974)(2.6602166,0.09013977)
			\psline[linecolor=black, linewidth=0.02](3.2402167,0.09013977)(2.9802167,0.5501398)(2.4002166,0.9901398)(1.8002167,1.1501398)(1.0802166,1.0301398)(0.6002167,0.7501398)(0.16021667,0.15013976)(0.020216675,-0.48986024)(0.14021668,-1.0898602)(0.48021668,-1.6498603)(1.2002167,-2.0898602)(1.7602167,-2.1698601)
			\psline[linecolor=black, linewidth=0.02](1.5202167,-1.5898602)(1.2202166,-2.1098602)(1.1002166,-2.7498603)(1.1802167,-3.2498603)(1.4402167,-3.7498603)(1.8202167,-4.1698604)(2.3802166,-4.36986)(2.9802167,-4.42986)
			\psline[linecolor=black, linewidth=0.02](6.560217,-1.7498603)(6.3402166,-2.3498602)(6.9002166,-2.3298602)
			\psline[linecolor=black, linewidth=0.02](5.2602167,-3.9898603)(4.9202166,-4.46986)(5.5002165,-4.42986)
			\psline[linecolor=black, linewidth=0.02](2.7202168,-3.8498602)(2.9602168,-4.40986)
			\psline[linecolor=black, linewidth=0.02](5.180217,-0.00986023)(5.7602167,-0.02986023)
			\psline[linecolor=black, linewidth=0.02](2.6602166,0.09013977)(3.2602167,0.09013977)
			\psline[linecolor=black, linewidth=0.02](1.5402167,-1.6098602)(1.7602167,-2.1498601)
			\psline[linecolor=black, linewidth=0.02](2.3802166,1.0101398)(1.9402167,0.6101398)(1.8002167,1.1501398)
			\psline[linecolor=black, linewidth=0.02](1.0802166,1.0301398)(1.0402167,0.43013978)(0.58021665,0.77013975)
			\psline[linecolor=black, linewidth=0.02](0.16021667,0.13013977)(0.58021665,-0.36986023)(0.020216675,-0.52986026)
			\psline[linecolor=black, linewidth=0.02](0.14021668,-1.0898602)(0.7802167,-1.1698602)(0.50021666,-1.6698602)
			\psline[linecolor=black, linewidth=0.02](3.3802166,0.8701398)(3.8002167,0.49013978)(3.9202166,1.0501398)
			\psline[linecolor=black, linewidth=0.02](4.4802165,1.0101398)(4.580217,0.47013977)(5.0002165,0.8501398)
			\psline[linecolor=black, linewidth=0.02](6.140217,0.9101398)(6.600217,0.47013977)(6.8202167,0.97013974)
			\psline[linecolor=black, linewidth=0.02](7.5202165,0.7501398)(7.5002165,0.17013977)(8.020217,0.31013978)
			\psline[linecolor=black, linewidth=0.02](8.300217,-0.20986024)(7.8402166,-0.60986024)(8.320217,-0.92986023)
			\psline[linecolor=black, linewidth=0.02](8.120216,-1.5498602)(7.5202165,-1.4498602)(7.620217,-2.0898602)
			\psline[linecolor=black, linewidth=0.02](1.1002166,-2.7498603)(1.6402167,-2.9898603)(1.2002167,-3.2898602)
			\psline[linecolor=black, linewidth=0.02](1.4202167,-3.7498603)(2.0002167,-3.6298602)(1.8402166,-4.1698604)
			\psline[linecolor=black, linewidth=0.02](2.3002167,-5.2098603)(2.8802166,-5.38986)(2.4602168,-5.80986)
			\psline[linecolor=black, linewidth=0.02](2.8802166,-6.2898602)(3.4602168,-6.04986)(3.5402167,-6.5898604)
			\psline[linecolor=black, linewidth=0.02](4.3602166,-6.6098604)(4.4202166,-6.04986)(5.0002165,-6.2898602)
			\psline[linecolor=black, linewidth=0.02](5.3602166,-5.82986)(5.0202165,-5.36986)(5.600217,-5.2298603)
			\psline[linecolor=black, linewidth=0.02](6.0202165,-4.30986)(6.0002165,-3.7498603)(6.4802165,-3.9098601)
			\psline[linecolor=black, linewidth=0.02](6.7802167,-3.5098603)(6.3602166,-3.1698601)(6.9202166,-2.92986)
			\rput[bl](3.7802167,-4.32986){\scriptsize{29}}
			\rput[bl](4.4402165,-4.7298603){\scriptsize{30}}
			\rput[bl](4.560217,-5.46986){\scriptsize{31}}
			\rput[bl](4.120217,-5.86986){\scriptsize{32}}
			\rput[bl](3.4602168,-5.90986){\scriptsize{33}}
			\rput[bl](3.0402167,-5.44986){\scriptsize{34}}
			\rput[bl](3.0802166,-4.6698604){\scriptsize{35}}
			\rput[bl](4.4602165,-3.3498602){\scriptsize{7}}
			\rput[bl](4.4602165,-2.5498602){\scriptsize{1}}
			\rput[bl](5.2602167,-2.1898603){\scriptsize{2}}
			\rput[bl](6.0002165,-2.5898602){\scriptsize{3}}
			\rput[bl](6.060217,-3.1898603){\scriptsize{4}}
			\rput[bl](5.7802167,-3.7298603){\scriptsize{5}}
			\rput[bl](5.160217,-3.7898602){\scriptsize{6}}
			\rput[bl](3.3402166,-3.3698602){\scriptsize{8}}
			\rput[bl](3.4002166,-2.4898603){\scriptsize{9}}
			\rput[bl](2.6002166,-2.0898602){\scriptsize{12}}
			\rput[bl](1.8402166,-2.5098603){\scriptsize{13}}
			\rput[bl](1.8602166,-3.0498602){\scriptsize{14}}
			\rput[bl](2.0602167,-3.5498602){\scriptsize{15}}
			\rput[bl](2.5602167,-3.7298603){\scriptsize{16}}
			\rput[bl](3.9202166,-1.4698602){\scriptsize{10}}
			\rput[bl](4.640217,-1.0698602){\scriptsize{11}}
			\rput[bl](4.660217,-0.28986022){\scriptsize{17}}
			\rput[bl](4.2402167,0.13013977){\scriptsize{18}}
			\rput[bl](3.7202168,0.13013977){\scriptsize{19}}
			\rput[bl](3.3802166,-0.18986022){\scriptsize{20}}
			\rput[bl](3.3802166,-1.1098602){\scriptsize{21}}
			\rput[bl](5.8602166,-1.1698602){\scriptsize{22}}
			\rput[bl](5.8402166,-0.34986022){\scriptsize{23}}
			\rput[bl](6.4202166,0.05013977){\scriptsize{24}}
			\rput[bl](7.0402164,-0.20986024){\scriptsize{25}}
			\rput[bl](7.3002167,-0.78986025){\scriptsize{26}}
			\rput[bl](7.160217,-1.3298602){\scriptsize{27}}
			\rput[bl](6.560217,-1.5498602){\scriptsize{28}}
			\rput[bl](2.0802166,-1.1098602){\scriptsize{36}}
			\rput[bl](2.2002168,-0.22986023){\scriptsize{37}}
			\rput[bl](1.7202166,0.19013977){\scriptsize{38}}
			\rput[bl](1.0402167,0.010139771){\scriptsize{39}}
			\rput[bl](0.7402167,-0.48986024){\scriptsize{40}}
			\rput[bl](0.9202167,-1.0698602){\scriptsize{41}}
			\rput[bl](1.4402167,-1.4298602){\scriptsize{42}}
			\psline[linecolor=black, linewidth=0.02](3.7202168,-2.2098603)(4.2802167,-3.3498602)
			\psline[linecolor=black, linewidth=0.02](4.3402166,-2.2298603)(5.100217,-1.1498603)
			\psline[linecolor=black, linewidth=0.02](5.4002166,-1.7298602)(6.600217,-1.7498603)
			\psline[linecolor=black, linewidth=0.02](6.3202167,-2.3098602)(6.9002166,-2.9098601)
			\psline[linecolor=black, linewidth=0.02](6.3802166,-3.1698601)(6.5002165,-3.8898602)
			\psline[linecolor=black, linewidth=0.02](6.0202165,-3.7498603)(5.5002165,-4.44986)
			\psline[linecolor=black, linewidth=0.02](3.9802167,-3.8498602)(5.2602167,-4.00986)
			\psline[linecolor=black, linewidth=0.02](2.9602168,-4.40986)(3.7402167,-3.3898602)
			\psline[linecolor=black, linewidth=0.02](2.4002166,-4.36986)(2.8802166,-5.38986)
			\psline[linecolor=black, linewidth=0.02](2.4602168,-5.7698603)(3.4602168,-6.04986)
			\psline[linecolor=black, linewidth=0.02](3.5202167,-6.5898604)(4.4202166,-6.04986)
			\psline[linecolor=black, linewidth=0.02](4.9602165,-6.2898602)(5.0402164,-5.38986)
			\psline[linecolor=black, linewidth=0.02](4.9002166,-4.44986)(5.600217,-5.2498603)
			\psline[linecolor=black, linewidth=0.02](1.8402166,-4.1498604)(2.7402167,-3.8698602)
			\psline[linecolor=black, linewidth=0.02](1.2202166,-3.2898602)(2.0002167,-3.6298602)
			\psline[linecolor=black, linewidth=0.02](1.2202166,-2.0898602)(1.6402167,-2.9898603)
			\psline[linecolor=black, linewidth=0.02](2.6202166,-1.1098602)(1.7602167,-2.1898603)
			\psline[linecolor=black, linewidth=0.02](2.9002166,-1.6498603)(4.0402164,-1.7098602)
			\psline[linecolor=black, linewidth=0.02](2.6402166,0.09013977)(3.2002168,-1.1498603)
			\psline[linecolor=black, linewidth=0.02](3.3802166,0.8701398)(3.2602167,0.11013977)
			\psline[linecolor=black, linewidth=0.02](3.7802167,0.53013974)(4.5002165,1.0101398)
			\psline[linecolor=black, linewidth=0.02](4.560217,0.49013978)(5.4802165,0.45013976)
			\psline[linecolor=black, linewidth=0.02](5.140217,0.03013977)(5.7202168,-1.1898602)
			\psline[linecolor=black, linewidth=0.02](5.7402167,-0.00986023)(6.180217,0.8701398)
			\psline[linecolor=black, linewidth=0.02](6.600217,0.51013976)(7.5202165,0.7301398)
			\psline[linecolor=black, linewidth=0.02](7.5202165,0.17013977)(8.300217,-0.20986024)
			\psline[linecolor=black, linewidth=0.02](7.8402166,-0.60986024)(8.100217,-1.5298603)
			\psline[linecolor=black, linewidth=0.02](7.5002165,-1.4298602)(6.9002166,-2.3298602)
			\rput[bl](7.0002165,-2.6298602){\scriptsize{$ a_1 $}}
			\rput[bl](6.9802165,-3.1698601){\scriptsize{$ a_2 $}}
			\rput[bl](6.8202167,-3.7498603){\scriptsize{$ a_3 $}}
			\rput[bl](6.4602165,-4.1698604){\scriptsize{$ a_4 $}}
			\rput[bl](5.9202166,-4.6098604){\scriptsize{$ a_5 $}}
			\rput[bl](5.560217,-4.80986){\scriptsize{$ a_6 $}}
			\rput[bl](4.9002166,-3.0298603){{$ F_{7,1} $}}
			\rput[bl](2.5602167,-2.7898602){{$ F_{7,2} $}}
			\rput[bl](4.160217,-0.7498602){{$ F_{7,3} $}}
			\rput[bl](6.3802166,-0.7498602){{$ F_{7,4} $}}
			\rput[bl](3.5602167,-5.0698605){{$ F_{7,5} $}}
			\rput[bl](1.4202167,-0.52986026){{$ F_{7,6} $}}
			\psline[linecolor=black, linewidth=0.02](0.5202167,-1.6498603)(1.5202167,-1.5898602)
			\psline[linecolor=black, linewidth=0.02](0.040216673,-0.5098602)(0.7802167,-1.1698602)
			\psline[linecolor=black, linewidth=0.02](0.6002167,0.7501398)(0.58021665,-0.34986022)
			\psline[linecolor=black, linewidth=0.02](1.0402167,0.47013977)(1.8002167,1.1301398)
			\psline[linecolor=black, linewidth=0.02](1.9202167,0.6101398)(2.9802167,0.53013974)
			\end{pspicture}
		}
	\end{center}
\caption{Type $ [3^4,7^1] $}
\label{sem_3471}
\end{figure}
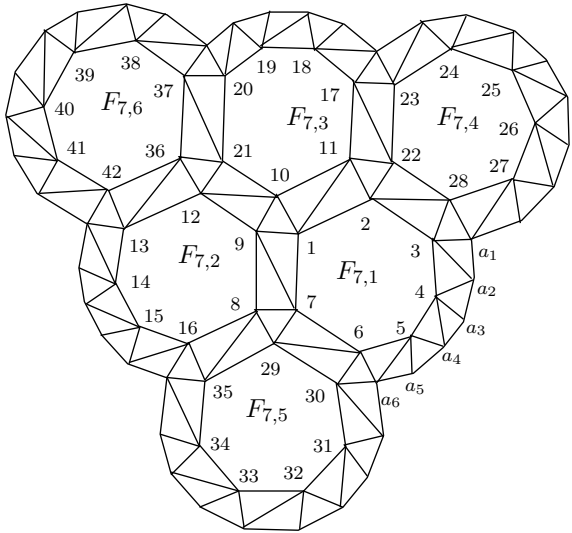
Let $lk(v)=C_{9}([v_1,v_2,v_3,v_4,v_5,v_6],v_7,v_8,v_9)$ denote link of a vertex $v\in V(X)$  where $[*]$ indicates that $v$ is appears in the 7-gon $[v,v_1,v_2,v_3,v_4,v_5,v_6]$, and $[v,v_7,v_6]$, $[v,v_7,v_8]$, $[v,v_8,v_9]$, $[v,v_1,v_9]$ form 3-gons. Without loss of generality, let $ V(X)=\{1,2,\dots, 42\} $, and \lkk{1}{2,3,4,5,6,7}{8}{9}{10}. Clearly, $ X $ has six 7-gons and all are disjoints, namely, $ F_{7,1} $, $ F_{7,2} $, $ F_{7,3} $, $ F_{7,4} $, $ F_{7,5} $, and $ F_{7,6} $. Without loss of generality, consider $ [8,9] $ which is adjacent to a 7-gon, hence, let $ F_{7,1}=[1,2,3,4,5,6,7] $, $ F_{7,2}=[8,9,12,13,14,15,16] $, $ F_{7,3}=[10,11,17,18,19,20,21] $, $ F_{7,4}=[22,23,24,25,26,27,28] $, $ F_{7,5}=[29,30,31,32,33,34,35] $, $ F_{7,6}=[36,37,38,39,40,41,42] $. Thus, we have the polyhedral complex of $ X $ illustrated in Figure \ref{sem_3471}.
It is clear that, one of $ [a_1,a_2] $, $ [a_3,a_4] $, $ [a_5,a_6] $ is adjacent to $ F_{7,6} $. If $ [a_1,a_2] $ adjacent to $ F_{7,6} $, then it is sufficient to check $ a_1=38 $ or $ a_1 \neq 38 $. If $ [a_5,a_6] $ is adjacent to $ F_{7,6} $, then this case is isomorphic to the case : $ [a_1,a_2] $ is adjacent to $ F_{7,6} $, under the map $ (2\ 7)(3\ 6)(4\ 5)(8\ 11) (9\ 10) (12\ 21) (13\ 20) (14\ 19) (15\ 18) (16\ 17) (22\ 29) (23\ 35) (24\ 34) (25\ 33) (26\ \allowbreak 32) (27\ 31) (28\ 30) (37\ 42) (38\ 41) (39\ 40) $. We see that if $ a_1=38 $, then $ [a_3,a_4] $ can not be adjacent to $ F_{7,2} $, $ F_{7,3} $, $ F_{7,4} $, and $ F_{7,6} $, i.e. $ [a_1,a_2] $ and $ [a_5,a_6] $ can not be adjacent to $ F_{7,6} $. If $ [a_3,a_4] $ is adjacent to $ F_{7,6} $, then it is sufficient to check whether $ a_3=38 $ or not. If $ a_3=38 $, it is easy to check that $ a_6\notin V(F_{7,2}), V(F_{7,3}), V(F_{7,4}) $, where $ V(F_{7,i}) $ indicate the vertex set of the 7-gon $ F_{7,i} $. Therefore, the map $ X $ of type $ [3^4,7^1] $ does not exist on the surface of \Echar{-1}.
\end{proof}

\begin{lemma}\label{lem9}
	Let  $X $ be a semi-equivelar map of type  $[p_1^{n_1}, \dots, p_k^{n_k}] $ on the surface of Euler char.  $-1 $. Then,  $[p_1^{n_1}, \dots, p_k^{n_k}] \neq [3^1, 7^1, 3^1, 7^1], [3^1, 14^2]$.
\end{lemma}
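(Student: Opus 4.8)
The plan is to rule out both types by a single incidence count, using the polyhedrality of $X$ together with the data recorded in Lemma \ref{lem1}. By Lemma \ref{lem1}, such an $X$ must be either a $21$-vertex map of type $[3^1,7^1,3^1,7^1]$ or a $42$-vertex map of type $[3^1,14^2]$, so it suffices to treat these two cases. Counting vertex--face incidences, a $42$-vertex map of type $[3^1,14^2]$ has $14$ triangles and $6$ faces that are $14$-gons, and a $21$-vertex map of type $[3^1,7^1,3^1,7^1]$ has $14$ triangles and $6$ heptagons. In each case the contradiction I will extract is that a non-triangular face is forced to be adjacent to more pairwise distinct non-triangular faces than the map actually possesses.

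For type $[3^1,14^2]$ I would argue with edges. From the local picture at a vertex (one triangle between two $14$-gons) one sees that every edge lies either on one triangle and one $14$-gon, or on two $14$-gons, and that at each vertex of a $14$-gon $P$ exactly one of the two edges of $P$ there is of the latter kind; hence along $\partial P$ the two kinds of edges alternate and $P$ has exactly $7$ edges shared with other $14$-gons. Since $X$ is polyhedral, two distinct faces meet in at most one edge, so these $7$ edges are shared with $7$ pairwise distinct $14$-gons, none equal to $P$; but there are only $6$ such faces in all, a contradiction.

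For type $[3^1,7^1,3^1,7^1]$ the edges carry no information, because the face-cycle $[3^1,7^1,3^1,7^1]$ forces every edge to lie on exactly one triangle and one heptagon; in particular no two heptagons share an edge, so by polyhedrality two distinct heptagons meet in at most one vertex. Each vertex lies on exactly two heptagons. Fix a heptagon $H$; for each of its $7$ vertices $v$ let $H_v$ denote the heptagon at $v$ other than $H$. If $H_v=H_{v'}$ for two distinct vertices $v,v'$ of $H$, then the two distinct heptagons $H$ and $H_v$ would share both $v$ and $v'$, contradicting that two distinct heptagons meet in at most one vertex; so the heptagons $H_v$ are pairwise distinct and all different from $H$, forcing $X$ to have at least $8$ heptagons --- a contradiction. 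This proves the lemma.

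The only step requiring care is the use of polyhedrality to secure the distinctness claims: one must verify that two $14$-gons (resp.\ heptagons) cannot share two vertices, i.e.\ that the shared pair cannot even be the endpoints of an edge common to both faces. For the $14$-gon case a common ``$14$-$14$'' edge would already make the intersection too large, and in the heptagon case two heptagons cannot share an edge at all; everything else is elementary incidence counting, so the argument is short.
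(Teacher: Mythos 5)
Your argument is correct, and it is genuinely different from the one in the paper. The paper disposes of both types by a vertex count: it writes down the link-cycle of a vertex (a $12$-cycle for $[3^1,7^1,3^1,7^1]$, a $25$-cycle for $[3^1,14^2]$) and asserts that completing the links of three (resp.\ four) suitably chosen vertices already forces more than $21$ (resp.\ $42$) vertices. That argument is very short but leaves the control of overlaps between the successive links to the reader (``it is clear that\dots''). You instead count faces: in both cases there are exactly $6$ large faces, and you show that a single large face would need $7$ pairwise distinct large neighbours --- via the alternation of $(3,14)$- and $(14,14)$-edges around a $14$-gon in the first case, and via the fact that no two heptagons share an edge (so they meet in at most one vertex) together with the two-heptagons-per-vertex count in the second. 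The polyhedrality hypotheses you invoke (two distinct faces meet in at most one edge or one vertex; a face boundary is a simple cycle, so a face occurs at most once in a face-cycle) are exactly the standing assumptions of the paper, and you flag them explicitly. Your route is slightly longer to write out but is self-contained and avoids the overlap bookkeeping that the paper's vertex-counting sketch suppresses; the paper's route generalises more readily to types where large faces do abut each other in ways that defeat a simple adjacency count.
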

\begin{proof}
Let $ X $ be a map of type $ [3^1,7^1,3^1,7^1] $ on the surface of \Echar{-1}, then by Euler equation, we have, $ |V(X)|=21 $. Denote link of a vertex $ v\in V() $ as $ lk(v)=C_{12}([v_1,v_2,v_3,v_4,v_5,v_6],[v_7,v_8,v_9,v_{10},v_{11},v_{12}]) $, where $ [*] $ is only to indicate $ [v,v_1,v_2,v_3,v_4,\allowbreak v_5,v_6] $, $ [v,v_7,v_8,v_9,v_{10},v_{11},v_{12}] $ form 7-gons, and $ [v,v_6,v_7] $, $ [v,v_1,v_{12}] $ form 3-gons. Then it is clear that to complete link of three vertices, we need more than 21 vertices, which is a contradiction. Therefore, there does not exist any map $ X $ of type $ [3^1,7^1,3^1,7^1] $ on the surface of \Echar{-1}. Similarly, if $ X $ is a map of type $ [3^1,14^2] $, then to complete links of four vertices, it needs more than 42 vertices, which is not possible. Therefore, there dose not exist any map of type $ [3^1,14^2] $ on the surface of \Echar{-1}. This completes the proof.
\end{proof}

\begin{proof}[Proof of Theorem \ref{theo:sf}]
Let $X $ be an $n $-vertex  map on the surface of Euler char. $\chi = -1 $ of type  $[p_1^{n_1}, \dots, $ $p_{\ell}^{n_{\ell}}] $.  Then  by Lemma \ref{lem1}, $(n, [p_1^{n_1}, \dots, p_{\ell}^{n_{\ell}}]) $ $= (12, [3^5, 4^1]), (42, [3^4, 7^1]), $ $ (20, [3^2, $ $4^1, 3^1, 5^1]), (12, $ $[3^1, 4^1, 3^1, 4^2]), $ $ (24, [3^4, 8^1]), $ $  (42, [3^1, 4^1, 7^1, 4^1]), $ $(24, [3^1, 4^1, 8^1, 4^1]), $ $ (15,[3^1, 5^3]) $, $(12, [3^1, 6^1, $ $4^1, 6^1]) $, $(21,[3^1, 7^1, 3^1, 7^1]), $  $(84, [4^1, $ $ 6^1, 14^1]), $ $(48, [4^1, 6^1, 16^1]), (40, $ $[4^1,8^1, 10^1]), (24, [6^2,  8^1]), $ $(42, [3^1, 14^2]), (42, [7^1, 6^2]), (20, $ $[4^3, $ $5^1]) $.

Clearly, $[p_1^{n_1}, \dots, p_{\ell}^{n_{\ell}}] \neq  [3^1, 6^1, 4^1, 6^1]$, $[3^2, 4^1, 3^1, 6^1], [4^3, 6^1] $ and $[4^1, 8^1, 12^1]$ by Prop. \ref{prop20}, $[p_1^{n_1},\allowbreak \dots, p_{\ell}^{n_{\ell}}] \neq [3^1, 5^3] $ by Prop. \ref{prop19}, and  $[p_1^{n_1}, \dots, p_{\ell}^{n_{\ell}}] \neq [3^2, 4^1, 3^1, 5^1] $ by Lemma \ref{lem6}.

If $[p_1^{n_1}, \dots, p_{\ell}^{n_{\ell}}] = [3^1, 4^1, 3^1, 4^2], [3^5, 4^1], [3^1, 4^1, $ $8^1, 4^1], [6^2, 8^1] $ or $[4^1, 6^1, 16^1] $ then we know the list of maps from Prop. \ref{prop14}, \ref{prop15} $\&$ \ref{prop16}.
If $[p_1^{n_1}, \dots, p_{\ell}^{n_{\ell}}] $ = $[4^3, 5^1] $ then also we know the list of maps from Lemma \ref{lemma1}.

If $[p_1^{n_1}, \dots, p_{\ell}^{n_{\ell}}] = [6^2, 7^1] $ then there exists a map $ \mathcal{K}_{5}$ by Lemma \ref{lem7}. 

If $[p_1^{n_1}, \dots, p_{\ell}^{n_{\ell}}] = [3^1, 4^1, 7^1, 4^1] $ then there exists a map $ \mathcal{K}_{4}$ by Lemma \ref{lem7}. 

If $[p_1^{n_1}, \dots, p_{\ell}^{n_{\ell}}] = [4^1, 6^1, 14^1]$ then there exists a map $ \mathcal{K}_{6}$ by Lemma \ref{lem7}.

If $[p_1^{n_1}, \dots, p_{\ell}^{n_{\ell}}] = [4^1, 8^1, 10^1] $ then there exists a map $ \mathcal{K}_{7}$ by Lemma \ref{lem7}.

Clearly, $[p_1^{n_1}, \dots, p_{\ell}^{n_{\ell}}] \neq [3^4, 7^1] $ by Lemma \ref{lem8} and $\neq [3^1, 7^1, 3^1, 7^1], [3^1, 14^2]$ by Lemma \ref{lem9}.

This completes the proof.
\end{proof}

\begin{proof}[Proof of Theorem \ref{theo:sf1}] Let $K$ be a semi-equivelar map of type $ [4^3,5^1] $ on the surface of \Echar{-1}. Then, $K \cong  \mathcal{K}_1 $, $  \mathcal{K}_2 $ or $  \mathcal{K}_3 $ from Lemma \ref{lemma1}, and $ \mathcal{K}_1 \ncong  \mathcal{K}_2 \ncong  \mathcal{K}_3 $ by Lemma \ref{lemma2}. This completes the proof.
\end{proof}

\begin{proof}[Proof of Theorem \ref{theo:sf2}] By Theorem \ref{theo:sf1}, $ \mathcal{K}_1,  \mathcal{K}_2,  \mathcal{K}_3$ are the maps of type $[4^3, 5^1]$ on the surface of Euler char. -1. We also know from Lemma \ref{lemma2} that $Aut( \mathcal{K}_1)=\langle\alpha_1,\alpha_2\rangle\cong D_2 $, $ Aut( \mathcal{K}_2)=\langle\beta\rangle\cong\mathbb{Z}_2 $, $ Aut( \mathcal{K}_3)=\langle\gamma_1,\gamma_2\rangle\cong D_4 $, where $ \alpha_1=(1\ 3)$ $(2\ 4)$ $(5\ 11)$ $(6\ 12)$ $(7\ 19)$ $(8\ 14)$ $(9\ 15)$ $(13\ 16)$ $(10\ 18)$ $(17\ 20) $, $ \alpha_2=(1\ 6)$ $(2\ 11)$ $(3\ 12)$ $(4\ 5)$ $(7\ 8)$ $(9\ 20)$ $(10\ 13)$ $(14\ 19)$ $(15\ 17)$ $(16\ 18) $, $ \beta=(1\ 4)$ $(2\ 3)$ $(5\ 6)$ $(7\ 14)$ $(8\ 19)$ $(9\ 18)$ $(10\ 16)$ $(11\ 12)$ $(13\ 17)$ $(15\ 20) $, $ \gamma_1=(1\ 8)$ $(2\ 9)$ $(3\ 15)$ $(4\ 14)$ $(5\ 19)$ $(6\ 7)$ $(11\ 17)$ $(12\ 20)$ $(13\ 16) $, $ \gamma_2=(1\ 5)$ $(2\ 11)$ $(3\ 12)$ $(4\ 6)$ $(7\ 19)$ $(8\ 14)$ $(9\ 15)$ $(10\ 13)$ $(16\ 18)$ $(17\ 20) $. So, for each $i$, the number of vertex orbits is more than one in $ \mathcal{K}_i$ under $Aut( \mathcal{K}_i)$. Therefore, $ \mathcal{K}_1,  \mathcal{K}_2,  \mathcal{K}_3$ are not vertex-transitive.

We know that if a map $ \mathcal{K} $ is vertex-transitive, then the graph $ G_i(\mathcal{K}) $ is vertex-transitive for all $ i $ and hence, the degree of each vertex of the graph $ G_i(\mathcal{K}) $ is same, for all $ i $. Now we can see that, in $ G_4(\mathcal{K}_{4}) $, degree of vertices $1, 4, 9, 15, 19$ is $4$, and degree of vertices $ 6, 13, 17, 22, 28, 29, 35, 36, 37 $ is $ 0 $; in $ G_6(\mathcal{K}_{5}) $, degree of vertices $1, 2, 4, 5, 9, 10, 11, 12,\allowbreak 15, 16, 19,\allowbreak 20$ is $5$, and degree of vertices $ 3, 8, 21, 23, 26, 31, 34, 39, 42 $ is 4; in $ G_6(\mathcal{K}_{6}) $, degree of vertices $24, 33, 41, 66, 72, 80$ is $5$, and degree of the vertex $ 6 $ is $ 0 $; in $ G_9(\mathcal{K}_{7}) $, degree of vertices $1, 9$ is $3$, and degree of vertices $ 23, 25 $ is $ 1 $. Hence, $ \mathcal{K}_{4} $, $ \mathcal{K}_{5} $, $\mathcal{K}_{6}$, and $ \mathcal{K}_{7} $ are not vertex-transitive.
\end{proof}


{\small

}


\begin{thebibliography}{99}

\bibitem{bu2019} D. Bhowmik and A. K. Upadhyay, {\em A Classification of Semi-equivelar maps on the surface of Euler characteristic -1} (under review).

\bibitem{dm2017}
B. Datta and D. Maity, {\em Semiequivelar and vertex-transitive maps on the torus}, Beitr\"{a}ge Algebra Geom. {\bf 58} (2017), 617--634.

\bibitem{datta2017}
B. Datta and D. Maity, {\em Semiequivelar maps on the torus and the Klein bottle are Archimedean}, Discrete Math. 341(12) (2018), 329--3309.

\bibitem{DM2018}
B. Datta and D. Maity, {\em Platonic solids, Archimedean solids and semi-equivelar maps on the sphere}, arXiv:1804.06692 [math.CO].

\bibitem{datta2006}
B. Datta and A. K. Upadhyay, {\em Degree-regular triangulations of the double-torus}, Forum Mathematicum 18 (6) (2006), 1011--1025.

\bibitem{GAP4} {\em GAP\mbox{-}Groups, Algorithms, and Programming, Version 4.10.2,} The GAP Group, 2017, http://www.gap-system.org.

\bibitem{am2019} A. Maiti, {\em Qusi-transitive maps on the plane}, arXiv:1909.09042.

\bibitem{utm2014}
A. K. Upadhyay, A. K. Tiwari and D. Maity  {\em Semi-equivelar maps}, Beitr\"{a}ge Algebra Geom.  {\bf 55} (2014), 229--242.

\bibitem{kn2012} J. Karabas and  R. Nedela, {\em Archimedean maps of higher genera} Mathematics of Comput. {\bf 81} (2012), 569--583.

\bibitem{k2011} J. Karabas, {\em Archimedean solids}, 2011, \textit{http://www.savbb.sk/ $\sim $karabas/science.html}.

\bibitem{tu2018} A. K. Tiwari and A. K. Upadhyay, {\em Semi-equivelar maps on the surface of Euler characteristic -1} Note Mat. {\bf 37} (2017), 91--102.
\end{thebibliography}
\end{document}